\definecolor{darkred}{rgb}{0.5,0,0}
\definecolor{darkgreen}{rgb}{0,0.5,0}
\definecolor{darkblue}{rgb}{0,0,0.5}
\numberwithin{equation}{section}
\newcommand{\Lep}{L}
\newcommand{\Z}{{\mathbb Z}}
\newcommand{\N}{\mathbb N}
\newcommand{\cK}{{\mathcal K}}
\newcommand{\cL}{{\mathcal L}}
\newcommand{\cA}{{\mathcal A}}
\newcommand{\cF}{{\mathcal F}}
\newcommand{\cd}{{\mathcal D}}
\newcommand{\mcard}{\mathrm{card}}
\providecommand{\norm}[2][]{\lVert#2\rVert\ifthenelse{\equal{}{#1}}{}{_{#1}}}
\providecommand{\bignorm}[2][]{\bigl\lVert#2\bigr\rVert\ifthenelse{\equal{}{#1}}{}{_{#1}}}
\providecommand{\Bignorm}[2][]{\Bigl\lVert#2\Bigr\rVert\ifthenelse{\equal{}{#1}}{}{_{#1}}}
\providecommand{\biggnorm}[2][]{\biggl\lVert#2\biggr\rVert\ifthenelse{\equal{}{#1}}{}{_{#1}}}
\providecommand{\Biggnorm}[2][]{\Biggl\lVert#2\Biggr\rVert\ifthenelse{\equal{}{#1}}{}{_{#1}}}
\providecommand{\spr}[3][]{\langle#2,#3\rangle\ifthenelse{\equal{}{#1}}{}{_{#1}}}
\providecommand{\bigspr}[3][]{\bigl\langle#2,#3\bigr\rangle\ifthenelse{\equal{}{#1}}{}{_{#1}}}
\providecommand{\Bigspr}[3][]{\Bigl\langle#2,#3\Bigr\rangle\ifthenelse{\equal{}{#1}}{}{_{#1}}}
\providecommand{\biggspr}[3][]{\biggl\langle#2,#3\biggr\rangle\ifthenelse{\equal{}{#1}}{}{_{#1}}}
\providecommand{\Biggspr}[3][]{\Biggl\langle#2,#3\Biggr\rangle\ifthenelse{\equal{}{#1}}{}{_{#1}}}
\let\originald\d 
\renewcommand{\d}{\ifthenelse{\boolean{mmode}}{\mathrm d}{\originald}}
\let\originali\i 
\renewcommand{\i}{\ifthenelse{\boolean{mmode}}{\mathrm i}{\originali}}
\newtheorem{theorem}{Theorem}[section]
\newtheorem{lemma}[theorem]{Lemma}
\newtheorem{prop}[theorem]{Proposition}
\newtheorem{cor}[theorem]{Corollary}
\newtheorem{defi}[theorem]{Definition}
\newtheorem{example}[theorem]{Example}
\newtheoremstyle{rremark}%
       {1.8ex\@plus1ex}     
       {2.1ex\@plus1ex\@minus.5ex} 
       {\normalfont}        
       {0pt}                
       {\bfseries}          
       {.}                  
       {.5em}               
       {}                   
\theoremstyle{rremark}
\newtheorem{remark}[theorem]{Remark}
\begin{document}
\title{Leptin densities in amenable groups}

\author{Felix Pogorzelski}
\address{Institut f\"ur Mathematik, Universit\"at Leipzig, Augustusplatz 10, 04109 Leipzig, Germany}
\email{felix.pogorzelski@math.uni-leipzig.de}

\author{Christoph Richard}
\address{Department f\"{u}r Mathematik, Friedrich-Alexander-Universit\"{a}t Erlangen-N\"{u}rnberg,
Cauerstrasse 11, 91058 Erlangen, Germany}
\email{christoph.richard@fau.de}

\author{Nicolae Strungaru}
\address{Department of Mathematical Sciences, MacEwan University \\
10700--104 Avenue, Edmonton, AB, T5J 4S2\\
and 
Institute of Mathematics ``Simon Stoilow''\\
Bucharest, Romania}
\email{strungarun@macewan.ca}

\begin{abstract}
Consider a positive Borel measure on a locally compact group. We define a notion of uniform density for such a measure, which is based on a group invariant introduced by Leptin in 1966. We then restrict to unimodular amenable groups and to translation bounded measures. In that case our density notion coincides with the well-known Beurling density from Fourier analysis, also known as Banach density from dynamical systems theory.
We use Leptin densities for a geometric proof of the model set density formula, which expresses the density of a uniform regular model set in terms of the volume of its window, and for a proof of uniform mean almost periodicity of such model sets.
\end{abstract}

\maketitle

\section{Introduction}

This article addresses asymptotic frequencies of point sets in locally compact groups $G$ that are amenable \cite{P88}.\footnote{We will implicitly assume that such groups always satisfy the Hausdorff property.} We introduce a canonical notion of uniform density, which is intimately related to the group invariant $I(G)$ introduced by Horst Leptin in \cite{L66}, and we thus call it the \textit{Leptin density} of a locally finite point set or more generally of a positive Borel measure on $G$. 
Let us restrict to amenable locally compact groups that are unimodular. This setting encompasses locally compact groups that are exponentially bounded \cite[Prop.~6.8]{P88}, such as locally compact abelian groups. Let us further restrict to translation bounded positive Borel measures, as defined below. In that case, a main result of our article asserts that our notion of Leptin density coincides with the classical notion of Banach density, also known as Beurling density or Beurling-Landau density. The latter densities may be evaluated on any so-called strong F{\o}lner or van Hove net. We will show that such nets can be obtained from any F{\o}lner net by a simple thickening procedure.

Our result might be useful for sampling and interpolation problems on such groups, as necessary conditions for sampling and interpolation  are usually formulated in terms of the Beurling density of the underlying point set. Our definition of Leptin density is in fact inspired by work of Gröchenig, Kutyniok and Seip \cite{GKS08} in that direction, compare \cite[Lem.~9.3]{RiSchu20}. In Euclidean space, it was already noted by Garbardo \cite{G18} that the Beurling density can be defined without referring to boxes. Due to its connection to amenability, our observation clarifies why the Beurling density is independent of the chosen averaging sets. Making proper use of Leptin densities might streamline existing proofs, as one does not have to resort to particular averaging sets. The notion of Leptin density might also provide insight as how to extend the notion of Beurling density beyond the group setting, compare \cite{FGHKR17, MR21}.

\smallskip

A further focus of this article is on applications to mathematical diffraction theory \cite{Hof1, BM, RS17}. In that field, it is common to work with van Hove (strong F{\o}lner) sequences, which restricts the analysis to $\sigma$-compact amenable groups. However one might drop the assumption of $\sigma$-compactness and use van Hove (strong F{\o}lner) nets, at least if one does not use dynamical arguments such as the pointwise ergodic theorem. In fact one might avoid nets at all, using Leptin densities. As a prominent example, consider the class of model sets in $\sigma$-compact locally compact abelian groups. Such point sets are obtained from a cut-and-project construction, originating in Meyer's ground breaking work during the 70ies \cite{Mey1, Mey2, Mey3, Schreiber71, Schreiber73}. Further investigation had been advocated by Moody in the 90ies \cite{RVM3, Moody00}, motivated by  the experimental discovery of quasicrystals. In fact the cut-and-project construction had been re-developed independently in that context, see e.g.~the historical discussion in \cite[Sec.~2]{RS17b}. By now the class of model sets constitutes the most important example of pure point diffractive structures, see e.g.~the monograph \cite{BG2} and references therein. Moreover, the last couple of years has seen an emerging theory of aperiodic order beyond the abelian situation, see  \cite{BHP17, BHP17II, BHP17III} for regular model sets in general locally compact second countable groups and homogeneous spaces, as well as \cite{BH, Mac, BHII}  for foundations of a new theory on approximate lattices.

\smallskip

The so-called density formula expresses the density of a regular model set in terms of the volume of its window.   A first proof for Euclidean space, which relies on the Poisson summation formula,  goes back to Meyer \cite{Mey2, Mey3}, compare \cite{RS17} for a recent account. In varying degrees of generality, alternative proofs have been given by geometric methods \cite{sch98}, dynamical methods \cite{Mo02}, and by methods based on almost periodicity, see e.g.~ \cite{BM,G05}.
We refer to \cite[Sec.~3]{HR15} for a detailed account of the history of the density formula until 2015. For non-abelian regular model sets as defined in \cite{BHP17}, a variant of density formula can be deduced using dynamical methods, see Remark~\ref{rem:densityBHP} below. This subsumes results for the non-abelian setting from \cite[Rem.~3.3]{HR15} and \cite[Sec.~9.3]{KR}.  In the present paper we use Leptin densities in order to give a geometric proof of the density formula for uniform regular model sets in amenable groups.
The method is via tilings with disjoint copies of translates of a relatively compact fundamental domain arising from a uniform lattice. In Euclidean space, a similar combinatorial proof was given by Schlottmann in \cite{sch98}. We emphasize at this point that the lattice in the cut-and-project scheme being cocompact is a crucial ingredient in the line of argumentation. In particular, the density formula presented here is not the most general version.  We will also use Leptin densities in order to analyse almost periodicity of a regular model set. This complements previous results in the $\sigma$-compact abelian case \cite{BM, LR07, GM14, G18, LSS20}.

\smallskip

The article is structured as follows. In the following section, we fix the class of point sets, using a measure-theoretic description. Our setting is that of right-translation bounded measures.
Section 3 is devoted to the notion of Leptin density and discusses some of its properties. In Section 4 we prove that Leptin densities coincide with Beurling densities, when defined without resorting to an averaging net.
In Section 5, we discuss various notions of boundary and how they characterise averaging nets. Moreover, we describe Beurling and Leptin densities via approximation by certain F{\o}lner nets.
In Section 6, we compute the Leptin density of a cocompact lattice in a locally compact amenable group via a tiling argument for fundamental domains. The same strategy is used in Section~7, where we give a geometric proof of the density formula for (regular) model sets in amenable groups. In Section~8, we discuss uniform versions of almost periodicity with focus on a description avoiding nets. We prove uniform mean almost periodicity of regular model sets using the notion of Leptin density. For the convenience of the reader, Appendix~A collects basic facts about nets, Appendix~B explains box decomposition arguments that are useful for monotilable groups, instead of our more general approach. Appendix C describes the relation between Leptin densities and the density notion in \cite{GKS08}, which has inspired our work.

\subsection*{Acknowledgements} CR would like to acknowledge visiting funds from the Universit\"at Leipzig in 2019 and from MacEwan University Edmonton in 2020. NS was supported by the Natural Sciences and Engineering Council of Canada (NSERC), via grant 2020-00038, and he would like to thank for the support.
FP and CR are grateful to the organizers of the conference ``Model sets and aperiodic order'' (Durham UK, September 3rd -- 7th, 2018), which provided an excellent  environment to draft some of the ideas that have now been developed in this paper. CR would like to thank Hartmut Führ and Karlheinz Gröchenig for discussions, and Karlheinz Gröchenig for funding an inspiring visit to NuHAG at Vienna University in October 2021, where part of this work has been presented.

\section{Delone measures}

\subsection{Definition of Delone measures}\label{sec:DM}

Instead of counting the number of points in locally finite sets, we use the more general setting of (positive) Borel measures. For some of our results we restrict to translation bounded measures, a notion which encompasses uniformly discrete point sets. See the discussion below.
It is somewhat common in works on non-abelian aperiodic order to define uniform discreteness and relative denseness for point sets via left translations, cf.\@ e.g.\@ \cite{mr13, HR15, BHP17, BHP17II, BH, BP}. Deviating from this convention we will work with the (of course equivalent) setting of uniform discreteness and relative denseness from the right. In the presence of a metric this means describing the topology by means of a right-invariant instead of a left-invariant metric. The reason for this choice is the interplay between uniform counting of points along asymptotically invariant  sets. For amenable groups, counting quantities defined via approximation by left-asymptotically invariant (left-F{\o}lner) sets show desirable uniformity properties with respect to translations from the right. This phenomenon has also been observed in the theory of aperiodic order, see e.g.\@ \cite{mr13, PS16, BHP}. In the present paper, we decide to work with left-asymptotic invariance, which is also standard in the literature, and we consequently accept the topological structure to be from the right.

\medskip

Throughout the paper, we denote by $G$ a locally compact topological group. When we speak of a locally compact group $G$ in this work, we will implicitly assume that $G$ also satisfies the Hausdorff property, i.e.,\@ any two points in $G$ can be separated by disjoint open neighborhoods.
For simplicity we will call a positive Borel measure a measure in the following.

\begin{defi}[Delone measure]
Let  $\nu$ be a measure on a locally compact group $G$.

\begin{itemize}
\item[(a)] We call $\nu$ upper translation bounded if there exist $C_u<\infty$ and a compact symmetric unit neighborhood $B_u\subseteq G$ such that $\sup_{x\in G} \nu(B_ux)\le C_u$.

\item[(b)] We call $\nu$ lower translation bounded if there exist $C_l>0$ and a compact symmetric unit neighborhood $B_l\subseteq G$ such that $\inf_{x\in G} \nu(B_lx)\ge C_l$.

\item[(c)] We call $\nu$ a Delone measure if $\nu$ it is both upper and lower translation bounded.
\end{itemize}
\end{defi}

\begin{remark}\label{rem:utb}
In the above definition, by convenience we restricted to compact symmetric unit neighborhoods. Using covering arguments, it is readily seen that $\nu$ is upper translation bounded iff $\sup_{x\in G}\nu(Kx)<\infty$ for every compact $K\subseteq G$. In particular, upper translation boundedness implies local finiteness, i.e., $\nu(A)$ is finite for every compact $A$. For a complex measure, upper and lower translation boundedness can be defined via its (positive) variation measure. Upper translation boundedness is traditionally called translation boundedness \cite{ARMA1}. The name Delone measure is motivated from weak Delone sets \cite{LW03}, i.e., from point sets that are weakly uniformly discrete and relatively dense. For point sets $\Lambda$, weak uniform discreteness is equivalent to upper translation boundedness of the associated point measure defined via $\nu(A)=\mathrm{card}(\Lambda\cap A)$. Relative denseness is equivalent to lower translation boundedness of the associated point measure. For point sets, uniform discreteness is also called separatedness, and weak uniform discreteness is also called relative separatedness, see e.g.~\cite{FGHKR17, MR21}.
\end{remark}

In this article, we will analyse various notions of density of a measure. Upper translation boundedness will result in finite upper density.  Lower translation boundedness will result in positive lower density. 
Examples of Delone measures are any (left or right) Haar measure on a unimodular locally compact group $G$ and, for a uniform lattice $L$ in $G$, the Haar measure on $L$ viewed as a measure on $G$.

\smallskip

\subsection{Standard estimates on unimodular groups}\label{subsec:se}

For later use, we provide standard estimates for Delone measures on unimodular groups. These are based on the following observation from \cite{OW87}.  In the sequel, $AB$ denotes the Minkowski product of $A,B\subseteq G$, i.e., we have $AB=\{ab: a\in A, b\in B\}$.

\begin{lemma}\label{lem:nOW}
Let $G$ be a unimodular locally compact group with Haar measure $m$.
Consider any compact $A\in\mathcal K$ and any unit neighborhood $B$. Take $\{a_1,\ldots, a_n\}\subseteq A$ maximal such that $Ba_i\cap Ba_j=\varnothing$ for $i\ne j$. Then
\begin{displaymath}
\bigcup_{i=1}^n B a_i \subseteq BA \ , \qquad
A \subseteq \bigcup_{i=1}^n B^{-1}Ba_i \ .
\end{displaymath}
where the union on the lhs is disjoint.
In particular we have
\begin{displaymath}
n\cdot m(B) \le m(BA)\ , \qquad m(A) \le n\cdot m(B^{-1}B) \ .
\end{displaymath}
\end{lemma}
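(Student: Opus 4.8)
The plan is to treat the statement as a maximal-packing argument: establish the two set inclusions first, and then read off the two measure inequalities by applying (sub)additivity together with the translation invariance of Haar measure. Before anything else I would record why a finite maximal family $\{a_1,\dots,a_n\}$ exists. Fixing a compact symmetric unit neighborhood $W\subseteq B$, the translates $Wa_i$ are pairwise disjoint (being subsets of the disjoint $Ba_i$) and all lie in the compact set $WA$, so $\sum_i m(Wa_i)\le m(WA)<\infty$. Since $W$ has nonempty interior we have $m(W)>0$, which caps the cardinality of any such family; hence a maximal one is finite and is produced by Zorn's lemma (equivalently, by finiteness of the packing number).

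The first inclusion $\bigcup_i Ba_i\subseteq BA$ is then immediate from $a_i\in A$, and the disjointness of the left-hand union holds by construction. For the second inclusion I would invoke maximality directly: given $a\in A$, either $a=a_i$ for some $i$, or adjoining $a$ to the family destroys pairwise disjointness. In the latter case $Ba\cap Ba_j\neq\varnothing$ for some $j$, i.e. $ba=b'a_j$ for some $b,b'\in B$, whence $a=b^{-1}b'a_j\in B^{-1}Ba_j$. As $e\in B^{-1}B$ also disposes of the case $a=a_i$, this yields $A\subseteq\bigcup_i B^{-1}Ba_i$.

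The measure inequalities follow mechanically. The disjoint union in the first inclusion gives $n\cdot m(B)=\sum_i m(Ba_i)=m\big(\bigcup_i Ba_i\big)\le m(BA)$, while subadditivity applied to the second inclusion gives $m(A)\le\sum_i m(B^{-1}Ba_i)=n\cdot m(B^{-1}B)$. Both computations rest on the identity $m(Ca_i)=m(C)$ for $C\in\{B,B^{-1}B\}$, that is, on the \emph{right} invariance of $m$.

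This last point is where I expect the only genuine subtlety to lie: the sets $Ba_i$ are right translates of $B$, so their measures equal $m(B)$ only because $G$ is unimodular (a left Haar measure is right invariant). The combinatorial core — a standard maximal packing that is simultaneously a covering — is routine; the main obstacle is simply bookkeeping the left/right conventions carefully so that the right translates retain the measure of $B$, and unimodularity is exactly the structural hypothesis that removes it. (One may assume $B$ Borel, or read $m(B)$ as outer measure, without affecting any of the estimates.)
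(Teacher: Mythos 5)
Your proposal is correct and follows essentially the same route as the paper: the first inclusion is immediate, the second is the standard maximality argument (if $Ba\cap Ba_j\neq\varnothing$ then $a\in B^{-1}Ba_j$), and the measure bounds follow from disjointness, subadditivity, and right invariance of $m$ supplied by unimodularity. The only difference is that you additionally justify the finiteness of the maximal packing, which the paper takes for granted in the statement.
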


\begin{proof}
The first inclusion is obvious. For the second inclusion, assume $a\in A$ satisfies $a\notin \bigcup_{i=1}^n B^{-1}Ba_i$. Then for arbitrary fixed $i\in \{1,\ldots, n\}$ we have $a\notin B^{-1} B a_i$. Hence $ba \ne b'a_i$ for all $b,b'\in B$, which implies $Ba\cap Ba_i=\varnothing$. As $i\in \{1,\ldots, n\}$ was arbitrary, this contradicts maximality. The inequalities for the Haar measure follow from unimodularity.
\end{proof}

We denote by $\mathcal K=\mathcal K(G)$ the collection of nonempty compact subsets of $G$.
For an upper translation bounded measure we have the following upper bound.

\begin{lemma}\label{lem:tbb}
Let $G$ be a unimodular locally compact group with Haar measure $m$.
Let $\nu$ be an upper translation bounded measure on $G$. Take a compact symmetric unit neighborhood $B_u\subseteq G$ and a finite number $C_u$ such that $\nu(B_u^2x)\le C_u$ for all $x\in G$. We then have for any $A\in \mathcal K$ the estimate
\begin{displaymath}
\nu(A)\le \frac{C_u}{m(B_u)} \cdot m(B_uA) \ .
\end{displaymath}
\end{lemma}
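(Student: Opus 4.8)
The plan is to apply Lemma~\ref{lem:nOW} directly, taking the unit neighborhood in that lemma to be $B_u$ itself. The hypotheses are tailored so that the two conclusions of Lemma~\ref{lem:nOW} combine almost immediately: one conclusion bounds the packing number $n$ from above in terms of $m(B_uA)$, while the other expresses $A$ as contained in $n$ translates of $B_u^2$, each of which carries $\nu$-mass at most $C_u$ by upper translation boundedness.

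Concretely, I would first record that since $B_u$ is symmetric we have $B_u^{-1}=B_u$, so $B_u^{-1}B_u=B_u^2$. Then I apply Lemma~\ref{lem:nOW} with $A$ and $B=B_u$ to obtain a maximal set $\{a_1,\ldots,a_n\}\subseteq A$ for which the translates $B_ua_1,\ldots,B_ua_n$ are pairwise disjoint. The first conclusion of that lemma gives the disjoint inclusion $\bigcup_{i=1}^n B_ua_i\subseteq B_uA$, whence, by disjointness and translation invariance of $m$,
\begin{displaymath}
n\cdot m(B_u)=\sum_{i=1}^n m(B_ua_i)=m\Bigl(\bigcup_{i=1}^n B_ua_i\Bigr)\le m(B_uA)\ ,
\end{displaymath}
so $n\le m(B_uA)/m(B_u)$.

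For the complementary estimate, I use the second conclusion $A\subseteq\bigcup_{i=1}^n B_u^{-1}B_ua_i=\bigcup_{i=1}^n B_u^2a_i$. By monotonicity and finite subadditivity of $\nu$, together with the hypothesis $\nu(B_u^2x)\le C_u$ applied at $x=a_i$, I get
\begin{displaymath}
\nu(A)\le\sum_{i=1}^n\nu(B_u^2a_i)\le n\cdot C_u\ .
\end{displaymath}
Combining the two displays yields $\nu(A)\le n\cdot C_u\le \tfrac{C_u}{m(B_u)}\,m(B_uA)$, which is the claim. I do not expect any genuine obstacle here: the entire content is that the covering in Lemma~\ref{lem:nOW} uses exactly the sets $B_u^2a_i$ on which $\nu$ is controlled, while the disjoint packing controls their number. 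The only points requiring a word of care are the symmetry identity $B_u^{-1}B_u=B_u^2$ and the fact that each covering translate $B_u^2a_i$ matches the hypothesis $\nu(B_u^2x)\le C_u$ verbatim; both are immediate.
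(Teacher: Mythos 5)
Your proof is correct and follows essentially the same route as the paper: both apply Lemma~\ref{lem:nOW} with $B=B_u$, bound $\nu(A)$ by $\sum_i\nu(B_u^2a_i)\le n\cdot C_u$ via the covering conclusion, and then use the packing conclusion $n\cdot m(B_u)\le m(B_uA)$. Your write-up merely spells out the symmetry identity $B_u^{-1}B_u=B_u^2$ and the disjointness argument, which the paper leaves implicit.
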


\begin{proof}
Fix any $A\in\mathcal K$ and consider the setting of Lemma~\ref{lem:nOW} with $B=B_u$. We then have
\begin{displaymath}
\nu(A) \le \nu\left(\bigcup_{i=1}^n B_u^2a_i \right) \le n\cdot \max_{1 \leq i \leq n}\nu(B_u^2a_i)  \le \frac{C_u}{m(B_u)} \cdot m(B_uA) \ .
\end{displaymath}
\end{proof}

For a measure that is lower translation bounded, we have the following lower bound.

\begin{lemma}\label{lem:tba}
Let $G$ be a unimodular locally compact group with Haar measure $m$.
Let $\nu$ be a lower translation bounded measure on $G$. Take a compact symmetric unit neighborhood $B_l\subset G$ and a positive number $C_l$ such that $\nu(B_lx)\ge C_l$ for all $x\in G$. We then have for any compact set $A\in \mathcal K$ the estimate
\begin{displaymath}
\nu(B_l A)\ge \frac{C_l}{m(B_l^2)} \cdot m(A)\ .
\end{displaymath}
\end{lemma}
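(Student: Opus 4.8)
The plan is to mirror the proof of Lemma~\ref{lem:tbb}, but now using the second (covering) inclusion of Lemma~\ref{lem:nOW} to bound the packing number $n$ from below rather than from above. First I would fix $A\in\mathcal K$ and apply Lemma~\ref{lem:nOW} with $B=B_l$, producing a maximal family $\{a_1,\ldots,a_n\}\subseteq A$ with the sets $B_la_i$ pairwise disjoint. Since $B_l$ is symmetric we have $B_l^{-1}B_l=B_l^2$, so the covering inclusion reads $A\subseteq\bigcup_{i=1}^n B_l^2 a_i$, and the associated Haar estimate $m(A)\le n\cdot m(B_l^2)$ gives the crucial lower bound $n\ge m(A)/m(B_l^2)$.

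Next I would exploit the disjoint packing inside $B_lA$. Because $\bigcup_{i=1}^n B_la_i\subseteq B_lA$ with the union disjoint, additivity of the measure $\nu$ yields $\nu(B_lA)\ge\sum_{i=1}^n\nu(B_la_i)$. Lower translation boundedness, applied with $x=a_i$, gives $\nu(B_la_i)\ge C_l$ for each $i$, whence $\nu(B_lA)\ge n\cdot C_l$. Combining this with the lower bound on $n$ produces $\nu(B_lA)\ge \frac{C_l}{m(B_l^2)}\cdot m(A)$, as claimed.

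I do not expect any genuine obstacle here: the argument is formally dual to Lemma~\ref{lem:tbb}, the only point requiring attention being that the two inclusions of Lemma~\ref{lem:nOW} now play swapped roles --- the disjointness inclusion feeds the additivity lower bound for $\nu$, while the covering inclusion feeds the lower bound for $n$. One should also note that $m(B_l^2)$ is finite and strictly positive, since $B_l^2$ is compact and contains a unit neighborhood, so the constant $C_l/m(B_l^2)$ is well defined.
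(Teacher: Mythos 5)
Your proof is correct and is essentially identical to the paper's: both apply Lemma~\ref{lem:nOW} with $B=B_l$, use the disjoint packing inside $B_lA$ together with $\nu(B_la_i)\ge C_l$ to get $\nu(B_lA)\ge n\cdot C_l$, and then bound $n\ge m(A)/m(B_l^2)$ via the covering inclusion and symmetry of $B_l$.
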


\begin{proof}
Fix any $A\in\mathcal K$ and consider the setting of Lemma~\ref{lem:nOW} with $B=B_l$. We then have
\begin{displaymath}
\nu(B_lA) \ge \nu\left(\bigcup_{i=1}^n B_la_i\right) =\sum_{i=1}^n \nu(B_la_i)\ge n\cdot C_l \ge \frac{C_l}{m(B_l^2)}\cdot m(A) \ .
\end{displaymath}
\end{proof}

\section{Leptin density of a measure}

We will define a certain uniform density that is based on the group invariant $I(G)$ introduced by Leptin \cite{L66}, see also \cite[Cor.~4.14]{P88}. Our notion is inspired by the work of Gr\"ochenig, Kutyniok and Seip in \cite{GKS08}, see Appendix C for a discussion of the connection.

\subsection{Definition}
Let $G$ be a locally compact group. Take any left Haar measure $m$ on $G$. Let $\mathcal K$ denote the collection of nonempty compact subsets of $G$, and let $\mathcal K_p\subseteq \mathcal K$ denote the collection of
compact subsets of $G$ of positive left Haar measure. Consider
\begin{displaymath}
I(G)=\sup_{K\in \mathcal K}\inf_{A\in \mathcal K_p} \frac{m(KA)}{m(A)} \ .
\end{displaymath}
Note that in the definition of $I(G)$ we may restrict to $K$ being a compact symmetric unit neighborhood without loss of generality.
The quantity $I(G)$ is a  group invariant as it does not depend on the choice of the left Haar measure.
It exhibits the following dichotomy.
\begin{lemma}\label{ref:lemcharamen}
Let $G$ be a locally compact group with left Haar measure $m$. Then either $I(G)=1$ or $I(G)=\infty$.
\end{lemma}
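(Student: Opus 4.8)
The plan is to exploit a submultiplicativity property of the functional $K\mapsto I_K:=\inf_{A\in\mathcal K_p} m(KA)/m(A)$, so that $I(G)=\sup_K I_K$ can only sit at the boundary value $1$ or else blow up. First I would record the normalisation already announced in the text: in computing $I(G)$ one may restrict the outer supremum to compact symmetric unit neighborhoods $K$. Indeed, any compact set sits inside such a neighborhood $K'$ (enlarge $K$ to the compact symmetric unit neighborhood $K\cup K^{-1}\cup V$, where $V$ is a fixed compact symmetric unit neighborhood), and $m(KA)\le m(K'A)$ forces $I_K\le I_{K'}$, so the two suprema agree. For such a $K$ we have $e\in K$, hence $A\subseteq KA$ and $m(A)\le m(KA)$ for every $A\in\mathcal K_p$; this gives $I_K\ge 1$, and therefore $I(G)\ge 1$. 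Note also that every ratio $m(KA)/m(A)$ is finite, since Haar measure is finite on compacts and $m(A)>0$, so in fact $I_K\in[1,\infty)$ for each such $K$.

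The key step is the estimate $I_{K^n}\ge (I_K)^n$ for every $n\in\mathbb N$, where $K^n$ is again a compact symmetric unit neighborhood with $K^n\supseteq K$. I would prove this by induction, the engine being the telescoping identity
\[
\frac{m(K^{n+1}A)}{m(A)} = \frac{m\bigl(K^{n}\,(KA)\bigr)}{m(KA)}\cdot\frac{m(KA)}{m(A)} .
\]
The crucial observation is that $KA$ is compact with $m(KA)\ge m(A)>0$, so $KA\in\mathcal K_p$ is an admissible competitor in the infimum defining $I_{K^n}$; thus the first factor is $\ge I_{K^n}$ and the second is $\ge I_K$. Taking the infimum over $A\in\mathcal K_p$ yields $I_{K^{n+1}}\ge I_{K^n}\,I_K$, and the induction closes.

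With this in hand the dichotomy is immediate. If $I_K=1$ for every compact symmetric unit neighborhood $K$, then $I(G)=\sup_K I_K=1$. Otherwise there is a $K$ with $I_K>1$; since each $K^n$ is itself admissible in the outer supremum, $I(G)\ge I_{K^n}\ge (I_K)^n\to\infty$, whence $I(G)=\infty$. The main obstacle is precisely the submultiplicativity step: everything hinges on checking that $KA$ stays a compact set of positive measure, so that the infimum bound may be reapplied with $KA$ in place of $A$, and on the reduction to symmetric unit neighborhoods, which guarantees $K^n\supseteq K$ and hence that the powers $K^n$ genuinely enter the supremum defining $I(G)$. The remaining bookkeeping (finiteness of the ratios and the lower bound $I_K\ge 1$) is routine.
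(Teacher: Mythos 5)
Your proof is correct and follows essentially the same route as the paper: the paper's argument rests on exactly the submultiplicativity $C(K^2)\ge C(K)^2$ (iterated to $C(K_0^n)\ge C(K_0)^n$), which is the telescoping estimate you prove in detail. You merely spell out the verification that $KA\in\mathcal K_p$ and the reduction to symmetric unit neighborhoods, which the paper leaves implicit.
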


\begin{proof}
Note that $I(G)\ge1$ by definition.
For any compact unit neighborhood $K\in \mathcal K_p$ consider
\begin{displaymath}
C(K)=\inf_{A\in \mathcal K_p} \frac{m(KA)}{m(A)}
\end{displaymath}
and note that $C(K^2)\ge C(K)^2$. Assume now that $I(G)>1$. Then there exist a compact unit neighborhood $K_0\in \mathcal K_p$ and $\varepsilon_0>0$ such that $C(K_0)>1+\varepsilon_0>1$. This implies $I(G)\ge C(K_0^n)\ge C(K_0)^n\to\infty$. Thus $I(G)=\infty$.
\end{proof}

Recall that a locally compact group $G$ is {\em amenable} if it admits a left-invariant mean on $L^{\infty}(G)$. The following important result is well known, see e.g. \cite{Eme68, Gre73} and the monograph \cite{P88}.
\begin{theorem}\cite[Cor.~4.14]{P88}\label{theo:am}
Let $G$ be a locally compact group. Then $G$ is amenable if and only if $I(G)=1$. \qed
\end{theorem}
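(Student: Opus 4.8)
The plan is to identify the condition $I(G)=1$ with the classical (left) F{\o}lner condition and then to invoke the Emerson--Greenleaf characterisation of amenability. As a first step I would unwind $I(G)=1$. Since in the definition of $I(G)$ one may restrict $K$ to compact symmetric unit neighbourhoods, and for any such $K$ and any $A\in\mathcal{K}_p$ one has $A\subseteq KA$ and hence $m(KA)\ge m(A)$, the inner infimum $C(K)=\inf_{A\in\mathcal{K}_p}m(KA)/m(A)$ is always at least $1$. Consequently $I(G)=1$ is equivalent to the assertion that for every compact symmetric unit neighbourhood $K$ and every $\varepsilon>0$ there exists $A\in\mathcal{K}_p$ with
\[
m(KA\setminus A)\le\varepsilon\,m(A).
\]
This is exactly the F{\o}lner condition, now phrased over the directed set of pairs $(K,\varepsilon)$, so the theorem reduces to the equivalence of amenability with the existence of F{\o}lner sets.

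For the implication $I(G)=1\Rightarrow$ amenable I would turn a F{\o}lner net into an invariant mean. Indexing by the pairs $(K,\varepsilon)$ above, the normalised functionals $\mathfrak m_{(K,\varepsilon)}(f)=m(A)^{-1}\int_A f\,\d m$ are states on $L^\infty(G)$, and a weak-$*$ cluster point $\mathfrak m$ exists by Banach--Alaoglu. For fixed $g\in G$ and all $K\ni g$, left-invariance of $m$ together with the F{\o}lner estimate gives $m(gA\setminus A)\le\varepsilon\,m(A)$, and since $m(gA)=m(A)$ also $m(A\setminus gA)\le\varepsilon\,m(A)$, whence $|\mathfrak m_{(K,\varepsilon)}(L_gf)-\mathfrak m_{(K,\varepsilon)}(f)|\le 2\varepsilon\,\|f\|_\infty$. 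As every $g$ lies in cofinally many $K$, the limit functional $\mathfrak m$ is left-invariant, so $G$ is amenable. This direction is routine bookkeeping with weak-$*$ limits.

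The reverse implication, amenable $\Rightarrow I(G)=1$, is where I expect the real work, and it is the classical hard half of the F{\o}lner theorem in the locally compact setting (F{\o}lner; Emerson--Greenleaf \cite{Eme68,Gre73}; see \cite[Ch.~4]{P88}). One first approximates the invariant mean weak-$*$ by probability densities in $L^1(G)$, obtaining $f\ge0$ with $\int f\,\d m=1$ that are approximately left-invariant, $\|L_gf-f\|_1$ small uniformly for $g$ in a prescribed compact set. The delicate passage is from such a density to an honest F{\o}lner set: regularising $f$ to be uniformly continuous and covering $K$ by finitely many small translates lets one bound $\int(\sup_{g\in K}L_gf-f)\,\d m$, after which the layer-cake identity $\int_0^\infty m(KA_t\setminus A_t)\,\d t=\int(\sup_{g\in K}L_gf-f)\,\d m$ for the superlevel sets $A_t=\{f>t\}$, combined with an averaging argument against the weight $m(A_t)\,\d t$, produces a threshold $t$ with $m(KA_t\setminus A_t)\le\varepsilon\,m(A_t)$. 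Together with the dichotomy of Lemma~\ref{ref:lemcharamen} this yields the equivalence; given the depth of this last step, in the paper itself it is cleanest to cite \cite[Cor.~4.14]{P88} directly.
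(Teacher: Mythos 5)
The paper does not prove this statement at all: it is imported verbatim from \cite[Cor.~4.14]{P88} and stated without proof, so your proposal is not competing with an argument in the text but reconstructing the classical one behind the citation. As an outline it is correct, and your opening reduction --- that $I(G)=1$ is equivalent to the F{\o}lner condition because $e\in K$ forces $m(KA)/m(A)=1+m(KA\setminus A)/m(A)$ --- is sound; the paper in fact records exactly this later as the equivalence (i)$\Leftrightarrow$(ii) of Proposition~\ref{prop:refine}. Your easy direction is fine: the symmetry of $K$ is what gives both $m(gA\setminus A)\le\varepsilon m(A)$ and $m(A\setminus gA)=m(g^{-1}A\setminus A)\le\varepsilon m(A)$, and cofinality of $\{(K,\varepsilon):g\in K\}$ is the right mechanism for passing invariance to a weak-$*$ cluster point. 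In the hard direction your sketch is thinnest at two places, both of which you should at least name if this were to be written out: (a) approximate invariance $\|L_gf-f\|_1\to 0$ for each fixed $g$ does not by itself control $\int(\sup_{g\in K}L_gf-f)\,\d m$, and the regularisation-plus-finite-cover step you allude to is genuinely needed there; and (b) the superlevel sets $A_t=\{f>t\}$ need not be compact, so one must pass to compact subsets by inner regularity of the Haar measure before they qualify as elements of $\mathcal K_p$ in the definition of $I(G)$. Both points are handled in \cite{Eme68, Gre73} and \cite[Ch.~4]{P88}, which is why the paper simply cites the result. One small correction: the dichotomy of Lemma~\ref{ref:lemcharamen} is not needed for the stated equivalence --- once amenability, the F{\o}lner condition and $I(G)=1$ are shown equivalent you are done; the dichotomy only adds that in the failure case $I(G)=\infty$.
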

We now define the notion of Leptin density of a measure. Although our definition is given in a general setting, the notion of Leptin density seems most useful in amenable unimodular locally compact groups. Moreover it will be infinite on measures that are not upper translation bounded. This will be discussed below.

\begin{defi}[Leptin densities]\label{def:lep}
Let $G$ be a locally compact group. Let $m$ be a left Haar measure on $G$. For a measure $\nu$ on $G$ consider
\begin{displaymath}
\Lep_\nu^-=\sup_{K\in \mathcal K}\inf_{A\in \mathcal K_p} \frac{\nu(KA)}{m(A)} \ , \qquad
\Lep_\nu^+=\inf_{K\in \mathcal K}\sup_{A\in \mathcal K_p} \frac{\nu(A)}{m(KA)}  \ .
\end{displaymath}
We call $\Lep_{\nu}^{-}$ and $\Lep_{\nu}^{+}$ the lower Leptin density respectively the upper Leptin density of the measure $\nu$. If $\Lep_{\nu}^{-}=\Lep_{\nu}^{+}=\Lep_\nu$, we call $\Lep_\nu$ the Leptin density of $\nu$.
\end{defi}

\begin{remark}\label{rem:Lepmot}
In the above definition we may assume $e\in K$ without loss of generality, due to left invariance of the Haar measure.
The attribution to Leptin is motivated by $\Lep^-_m=I(G)$ and $\Lep^+_m=I(G)^{-1}$. Note that  the left Haar measure $m$ has Leptin density $1$ if and only if $G$ is amenable, as a consequence of Theorem~\ref{theo:am}. 
\end{remark}

\subsection{Some properties of Leptin densities}

Let us analyse how Leptin densities behave with respect to translation. For a given measure $\nu$ on $G$ we consider its left translation $\nu\mapsto \delta_t*\nu$ and its right translation $\nu\mapsto \nu*\delta_t$  for $t\in G$, where $(\delta_t*\nu)(A)=\nu(t^{-1}A)$ and $(\nu*\delta_t)(A)=\nu(At^{-1})$.
The following result is obvious.
\begin{lemma}\label{sec:lepinv}
Let $\nu$ be a measure on a locally compact group $G$. Then the Leptin densities of $\nu$ are invariant under left translation, i.e., we have for all $t\in G$ that
\begin{displaymath}
\Lep^-_{\delta_t*\nu}=\Lep^-_\nu \ , \qquad \Lep^+_{\delta_t*\nu}=\Lep^+_\nu \ .
\end{displaymath}
If $G$ is assumed to be unimodular, then the Leptin densities of $\nu$ are also invariant under right translation, i.e., we have for all $t\in G$ that
\begin{displaymath}
\Lep^-_{\nu*\delta_t}=\Lep^-_\nu \ , \qquad \Lep^+_{\nu*\delta_t}=\Lep^+_\nu \ .
\end{displaymath}
\qed
\end{lemma}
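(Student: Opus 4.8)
The plan is to verify the four claimed equalities directly from Definition~\ref{def:lep}, exploiting how left and right translations interact with the expressions $\nu(KA)$ and $m(A)$ (respectively $\nu(A)$ and $m(KA)$). The key observation is that the suprema and infima range over \emph{all} of $\mathcal K$ and $\mathcal K_p$, so it suffices to exhibit, for each translation, a measure-preserving reparametrisation of the index sets that carries one density expression onto the other.

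For left invariance, I would compute $(\delta_t*\nu)(KA)=\nu(t^{-1}KA)$. Writing $t^{-1}K=K'$, as $K$ ranges over $\mathcal K$ so does $K'=t^{-1}K$ (left translation is a bijection of $\mathcal K$), and the denominator $m(A)$ is untouched. Hence
\begin{displaymath}
\Lep^-_{\delta_t*\nu}=\sup_{K\in\mathcal K}\inf_{A\in\mathcal K_p}\frac{\nu(t^{-1}KA)}{m(A)}
=\sup_{K'\in\mathcal K}\inf_{A\in\mathcal K_p}\frac{\nu(K'A)}{m(A)}=\Lep^-_\nu \ .
\end{displaymath}
The analogous substitution in $\Lep^+$ (where $(\delta_t*\nu)(A)=\nu(t^{-1}A)$ and one reindexes $A'=t^{-1}A$, noting $m(KA)=m(Kt\,A')$ so that $K'=Kt$ again ranges over $\mathcal K$) gives left invariance of $\Lep^+$. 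Here one uses left invariance of $m$ to leave the Haar-measure factors in the correct form.

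For right invariance I would compute $(\nu*\delta_t)(KA)=\nu(KAt^{-1})$. The difficulty, and the only genuinely non-trivial point, is that $KAt^{-1}=K(At^{-1})$ only after associativity of the Minkowski product, so the natural reindexing is $A'=At^{-1}$; one then needs $m(A')=m(At^{-1})=m(A)$, which is exactly where \textbf{unimodularity} enters, since right translation preserves a left Haar measure if and only if $G$ is unimodular. With that, $A'$ ranges over $\mathcal K_p$ as $A$ does (right translation is a bijection preserving both compactness and positivity of measure), and $\nu(KAt^{-1})=\nu(KA')$ while $m(A)=m(A')$, yielding $\Lep^-_{\nu*\delta_t}=\Lep^-_\nu$. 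The same substitution handles $\Lep^+$, using $m(KAt^{-1})=m(KA')=m(KA')$ together with $m$ being right-invariant.

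The main obstacle is thus entirely conceptual rather than computational: isolating precisely the one spot where unimodularity is indispensable (preservation of $m$ under the substitution $A\mapsto At^{-1}$) and confirming that everywhere else the reindexing bijections of $\mathcal K$ and $\mathcal K_p$ are legitimate. Since the statement is flagged as ``obvious'' in the text, I expect the proof to amount to recording these substitutions, and it is safe to omit the routine verification that left/right translation is a measure-class-preserving bijection on the relevant collections of compact sets.
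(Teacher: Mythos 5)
Your substitution argument is correct and is exactly the routine verification the paper omits when it declares the lemma obvious: reindex $K\mapsto t^{-1}K$ (resp.\ $K\mapsto Kt$) for left invariance, and $A\mapsto At^{-1}$ for right invariance, with unimodularity entering only to guarantee $m(At^{-1})=m(A)$ and $m(KAt^{-1})=m(KA)$. You correctly isolate that single use of unimodularity, so nothing is missing.
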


Leptin densities reflect amenability of the underlying group. Whereas they are degenerate for Delone measures in non-amenable groups, they are well behaved in the amenable case.
\begin{lemma}\label{lem:Smp}
Let $\nu$ be a measure on a locally compact group $G$. If $G$ is amenable, then $\Lep^-_\nu\le \Lep^+_\nu$.
\end{lemma}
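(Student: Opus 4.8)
The plan is to reduce the global inequality to a pointwise comparison between the two defining expressions for a \emph{fixed} pair of compact sets, and then pass to the outer suprema and infima. Note first that the usual minimax inequality $\sup\inf\le\inf\sup$ does not apply directly, because $\Lep_\nu^-$ and $\Lep_\nu^+$ are built from the two \emph{different} functions $(K,A)\mapsto \nu(KA)/m(A)$ and $(K,A)\mapsto \nu(A)/m(KA)$. The bridge between them will be amenability, used in the sharp form $I(G)=1$ from Theorem~\ref{theo:am}.

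By Remark~\ref{rem:Lepmot} I may restrict the outer optimisations to compact sets containing $e$. Hence it suffices to establish, for every $K_1,K_2\in\mathcal K$ with $e\in K_1$ and $e\in K_2$, the single inequality
\[
\inf_{A\in\mathcal K_p}\frac{\nu(K_1A)}{m(A)}\ \le\ \sup_{A\in\mathcal K_p}\frac{\nu(A)}{m(K_2A)} ;
\]
taking the supremum over $K_1$ on the left and the infimum over $K_2$ on the right would then give $\Lep_\nu^-\le\Lep_\nu^+$.

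To prove this inequality I would set $c=\inf_{A\in\mathcal K_p}\nu(K_1A)/m(A)\ge 0$, so that $\nu(K_1A)\ge c\,m(A)$ for every $A\in\mathcal K_p$. Since $e\in K_1$, for each such $A$ the set $K_1A$ is compact with $m(K_1A)\ge m(A)>0$, so $K_1A\in\mathcal K_p$ and may legitimately be used as a test set on the right-hand side. Restricting the supremum to test sets of this special form and inserting the lower bound on $\nu(K_1A)$ yields
\[
\sup_{A'\in\mathcal K_p}\frac{\nu(A')}{m(K_2A')}\ \ge\ \sup_{A\in\mathcal K_p}\frac{\nu(K_1A)}{m(K_2K_1A)}\ \ge\ c\cdot\sup_{A\in\mathcal K_p}\frac{m(A)}{m(K_2K_1A)} .
\]

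The hard part — and really the whole content of the lemma — is the final factor, which equals $\bigl(\inf_{A\in\mathcal K_p} m(K_2K_1A)/m(A)\bigr)^{-1}$. Because $e\in K_2K_1$, each ratio $m(K_2K_1A)/m(A)$ is at least $1$, while amenability forces $\inf_{A} m(K_2K_1A)/m(A)\le I(G)=1$; together these show the infimum is exactly $1$, so the factor is $1$ and the right-hand side is bounded below by $c$, as desired. This is precisely where amenability is indispensable: it is what makes the auxiliary measure ratio $m(K_2K_1A)/m(A)$ approach $1$, so that the purely measure-theoretic factor separating the lower and upper density disappears. For non-amenable $G$ this factor would be bounded away from $1$ (indeed $I(G)=\infty$ by Lemma~\ref{ref:lemcharamen}), which is exactly why the conclusion is expected to degenerate there.
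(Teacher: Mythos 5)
Your proof is correct and is essentially the paper's argument run forwards instead of by contradiction: both hinge on testing the upper-density supremum on sets of the form $K_1A$ and on the composite compact set $K_2K_1$ (the paper's $K_aK_b$), with amenability entering through $I(G)=1$ via Theorem~\ref{theo:am}. The only difference is organizational — the paper assumes $\Lep^+_\nu<\Lep^-_\nu$, derives $m(K_aK_bA)/m(A)\ge b/a>1$ for all $A$ and hence $I(G)=\infty$, while you conclude directly that $\inf_{A\in\mathcal K_p} m(K_2K_1A)/m(A)=1$ and read off the inequality.
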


\begin{proof}
Assume $\Lep^+_\nu<\Lep^-_\nu$ and pick $a<b$ such that
$\Lep^+_\nu< a <b <\Lep^-_\nu$.
Since $b<\Lep^-_\nu$, there exists $K_b\in \cK$ such that for all $A\in\cK_p$ we have
$b \cdot m(A)\le \nu(K_bA)$. Since $a > \Lep^+_\nu$, there exists $K_a\in\cK$ such that for all $A\in\cK_p$ we have
$a \cdot m(K_aA)\ge \nu(A)$. Now fix arbitrary $A\in\cK_p$. We thus have
$b \cdot m(A) \le \nu(K_bA)\leq a \cdot m(K_aK_bA)$.
Therefore, for all $A\in\cK_p$ we have
\begin{displaymath}
\frac{ m(K_aK_bA)}{m(A)} \geq \frac{b}{a} >1 \,.
\end{displaymath}
Using the notation of the proof of Lemma~\ref{ref:lemcharamen}, this implies $C(K_aK_b)>1$. Hence $I(G)=\infty$, and  $G$ is not amenable by Theorem~\ref{theo:am}.
\end{proof}

For a converse of the above statement in the unimodular case, see Lemma~\ref{lem:elprop}.

\subsection{Leptin densities of Delone measures in unimodular groups}

In this subsection, we restrict to unimodular locally compact groups. The standard estimates of Section~\ref{subsec:se} have the following consequence.

\begin{lemma}[Standard estimates]\label{lem:seLepDel}
Let $G$ be a unimodular locally compact group. Let $\nu$ be any Delone measure on $G$. Take compact symmetric unit neighborhoods $B_l, B_u\subseteq G$ and positive finite constants $C_l, C_u$ such that
\begin{displaymath}
C_l\le  \nu(B_lx) \ , \qquad \nu(B_u^2x) \le C_u
\end{displaymath}
for all $x\in G$. Then the following  estimates hold.
\begin{displaymath}
\begin{split}
 \frac{C_l}{m(B_l^{2})}\cdot I(G)&\le \Lep_\nu^-\le \frac{C_u}{m(B_u)}\cdot I(G) \ , \\
\frac{C_l}{m(B_l^{2})}\cdot I(G)^{-1} &\le \Lep_\nu^+\le \frac{C_u}{m(B_u)} \cdot I(G)^{-1}  \ .
\end{split}
\end{displaymath}
Here the upper estimates rely on upper translation boundedness, and the lower estimates rely on lower translation boundedness.
\end{lemma}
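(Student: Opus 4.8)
The plan is to derive all four estimates from the two standard bounds in Lemma~\ref{lem:tbb} and Lemma~\ref{lem:tba}, namely $\nu(A)\le \frac{C_u}{m(B_u)} m(B_u A)$ and $\nu(B_l A)\ge \frac{C_l}{m(B_l^2)} m(A)$, combined with the definition of $I(G)$. The two lower estimates will use only the second bound (lower translation boundedness) and the two upper estimates only the first (upper translation boundedness), exactly as the statement asserts. Throughout I would use the elementary identity $I(G)^{-1}=\inf_{K\in\mathcal K}\sup_{A\in\mathcal K_p}\frac{m(A)}{m(KA)}$, which follows by taking reciprocals inside the $\inf$/$\sup$ defining $I(G)$, and I would repeatedly invoke that passing to a sub-collection of test sets increases an infimum and decreases a supremum.

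For the lower Leptin density the two bounds are direct, because the compact set $K$ sits on the left in $\nu(KA)$. For $\frac{C_l}{m(B_l^2)}I(G)\le \Lep_\nu^-$, I would restrict the outer supremum to sets of the form $B_l K'$; applying Lemma~\ref{lem:tba} to the compact set $K'A$ gives $\nu(B_l K' A)\ge \frac{C_l}{m(B_l^2)} m(K'A)$, so that $\inf_A \frac{\nu(B_l K'A)}{m(A)}\ge \frac{C_l}{m(B_l^2)}\inf_A \frac{m(K'A)}{m(A)}$, and taking the supremum over $K'$ reproduces $I(G)$. For $\Lep_\nu^-\le \frac{C_u}{m(B_u)}I(G)$, I would apply Lemma~\ref{lem:tbb} to $KA$ to get $\nu(KA)\le \frac{C_u}{m(B_u)} m(B_u K A)$; then $\inf_A\frac{\nu(KA)}{m(A)}\le \frac{C_u}{m(B_u)}\inf_A \frac{m(B_u K A)}{m(A)}\le \frac{C_u}{m(B_u)}I(G)$, the last step because $B_u K$ is one admissible compact set in the supremum defining $I(G)$; taking the supremum over $K$ finishes this case.

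The upper Leptin density requires more care, since now $K$ appears in the denominator via $m(KA)$ and the Minkowski product is non-commutative. For $\frac{C_l}{m(B_l^2)}I(G)^{-1}\le \Lep_\nu^+$ I would, for each fixed $K$, restrict the inner supremum to sets of the form $A=B_l A'$; Lemma~\ref{lem:tba} then yields $\frac{\nu(B_lA')}{m(KB_lA')}\ge \frac{C_l}{m(B_l^2)}\frac{m(A')}{m(KB_lA')}$, and recognising $\sup_{A'}\frac{m(A')}{m(KB_lA')}=(\inf_{A'}\frac{m(KB_lA')}{m(A')})^{-1}\ge I(G)^{-1}$ (since $KB_l$ is admissible in the supremum defining $I(G)$) gives the bound for every $K$, hence after the infimum over $K$. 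For $\Lep_\nu^+\le \frac{C_u}{m(B_u)}I(G)^{-1}$, the decisive point is to attach $B_u$ on the \emph{right}: I would restrict the outer infimum to sets of the form $K=K'B_u$. Applying Lemma~\ref{lem:tbb} and then substituting $\tilde A=B_uA$ turns $\frac{\nu(A)}{m(K'B_uA)}\le \frac{C_u}{m(B_u)}\frac{m(B_uA)}{m(K'B_uA)}$ into $\frac{C_u}{m(B_u)}\frac{m(\tilde A)}{m(K'\tilde A)}$, whose supremum over $\tilde A$ is at most $\sup_{\tilde A\in\mathcal K_p}\frac{m(\tilde A)}{m(K'\tilde A)}$; taking the infimum over $K'$ and using the identity for $I(G)^{-1}$ completes the proof.

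I expect the main obstacle to be the directional bookkeeping rather than any deep analysis: one must consistently track whether passing to a sub-collection of $K$'s or $A$'s moves an $\inf$ or a $\sup$ in the desired direction, and — because $KA\ne AK$ in general — one must attach the neighbourhoods $B_u,B_l$ on the correct side so that a substitution of the form $\tilde A = B_u A$ (or $A=B_l A'$) cleanly recovers the expression defining $I(G)$ or $I(G)^{-1}$. The $\Lep_\nu^+$ upper bound, where $B_u$ must be multiplied on the right, is the step most sensitive to this choice.
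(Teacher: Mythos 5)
Your proposal is correct and follows essentially the same route as the paper: both derive all four bounds by combining Lemma~\ref{lem:tbb} and Lemma~\ref{lem:tba} with the definition of $I(G)$, restricting the supremum (resp.\ infimum) over $K$ to sets of the form $B_lK$ or $KB_u$ and applying the standard estimates to $KA$ or $B_uA$. The paper only writes out the two $\Lep_\nu^-$ inequalities and declares the $\Lep_\nu^+$ case analogous; your explicit treatment of that case, including the identity $I(G)^{-1}=\inf_{K}\sup_{A}m(A)/m(KA)$ and the right-sided attachment of $B_u$, is exactly the intended argument.
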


\begin{proof}
We consider $\Lep_\nu^-$ first and use the  estimate in Lemma~\ref{lem:tba} to obtain
\begin{displaymath}
\Lep_\nu^-\ge \sup_{K\in \mathcal K}\inf_{A\in \mathcal K_p} \frac{\nu(B_lKA)}{m(A)}
\ge \frac{C_l}{m(B_l^{2})}\cdot \sup_{K\in \mathcal K}\inf_{A\in \mathcal K_p} \frac{m(KA)}{m(A)}
=  \frac{C_l}{m(B_l^{2})}\cdot I(G) \ .
\end{displaymath}
For an upper estimate, we use the  estimate in Lemma~\ref{lem:tbb} to obtain
\begin{displaymath}
\Lep_\nu^-=\sup_{K\in \mathcal K}\inf_{A\in \mathcal K_p} \frac{\nu(KA)}{m(A)}
\le \frac{C_u}{m(B_u)}\cdot \sup_{K\in \mathcal K}\inf_{A\in \mathcal K_p} \frac{m(B_uKA)}{m(A)}
\le  \frac{C_u}{m(B_u)}\cdot I(G) \ .
\end{displaymath}
The proofs of the assertions on $\Lep_\nu^+$ are analogous and use Lemma~\ref{lem:tbb}.
\end{proof}
The following lemma relates the notions of translation boundedness and Leptin density. This also motivates the notion of Delone measure. Part (a) of the lemma is analogous to \cite[Cor.~2]{G18}.
\begin{lemma}\label{lem:tbD}
Let $G$ be a unimodular locally compact group, and let $\nu$ be any measure on $G$. Then the following hold.
\begin{itemize}
\item[(a)] $\nu$ is upper translation bounded if and only if $\Lep_\nu^+$ is finite.
\item[(b)] $\nu$ is lower translation bounded if and only if $\Lep_\nu^-$ is positive.
\item[(c)] $\nu$ is a Delone measure if and only if $\Lep_\nu^-$ is positive and $\Lep_\nu^+$ is finite.
\end{itemize}
\end{lemma}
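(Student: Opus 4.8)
The plan is to establish (a) and (b) independently; part (c) then follows for free, since by definition a Delone measure is one that is simultaneously upper and lower translation bounded, and these two conditions are precisely ``$\Lep_\nu^+$ finite'' and ``$\Lep_\nu^-$ positive'' by (a) and (b) respectively. So the whole lemma reduces to two equivalences, each of which I would prove by separately treating the ``only if'' and ``if'' directions.

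For the ``only if'' directions I would simply feed the standard estimates of Section~\ref{subsec:se} into the definitions. Assuming $\nu$ upper translation bounded, Remark~\ref{rem:utb} lets me fix a compact symmetric unit neighborhood $B_u$ and set $C_u := \sup_{x\in G}\nu(B_u^2x)<\infty$; Lemma~\ref{lem:tbb} then gives $\nu(A)\le \frac{C_u}{m(B_u)}\,m(B_uA)$ for all $A\in\mathcal K$, and evaluating the outer infimum in $\Lep_\nu^+$ at the single set $K=B_u$ produces $\Lep_\nu^+\le C_u/m(B_u)<\infty$. Dually, assuming $\nu$ lower translation bounded with $\nu(B_lx)\ge C_l$, Lemma~\ref{lem:tba} gives $\nu(B_lA)\ge \frac{C_l}{m(B_l^2)}\,m(A)$, and evaluating the outer supremum in $\Lep_\nu^-$ at $K=B_l$ produces $\Lep_\nu^-\ge C_l/m(B_l^2)>0$. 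These steps are routine once the correct averaging set is plugged in.

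The ``if'' directions carry the real content, and there I would argue directly from the definitions. If $\Lep_\nu^+<\infty$, the infimum defining it furnishes some $K\in\mathcal K$ and a finite $c$ with $\nu(A)\le c\,m(KA)$ for every $A\in\mathcal K_p$. I would then fix once and for all a compact symmetric unit neighborhood $B$ (so $m(B)>0$) and test this inequality at $A=Bx$, which lies in $\mathcal K_p$ for every $x\in G$; this gives $\nu(Bx)\le c\,m(KBx)$. By unimodularity $m(KBx)=m(KB)$ is independent of $x$, so $\nu(Bx)\le c\,m(KB)=:C_u$ uniformly, which is exactly upper translation boundedness. The dual argument for (b) starts from $\Lep_\nu^->0$, extracts $K\in\mathcal K$ and $c>0$ with $\nu(KA)\ge c\,m(A)$ for all $A\in\mathcal K_p$, tests at $A=Bx$ to obtain $\nu(KBx)\ge c\,m(Bx)=c\,m(B)>0$, and finally replaces $KB$ by a compact symmetric unit neighborhood $B_l\supseteq KB$ (for instance $B_l=KB\cup(KB)^{-1}\cup W$ with $W$ any compact symmetric unit neighborhood), so that $\nu(B_lx)\ge\nu(KBx)\ge c\,m(B)=:C_l>0$ by monotonicity.

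I expect the main obstacle to be purely bookkeeping: passing from the abstract witnessing set $K\in\mathcal K$ delivered by the inf/sup to a genuine compact \emph{symmetric unit neighborhood} as required by the definitions, and invoking unimodularity to turn the left-Haar quantity $m(KBx)$ (resp.\ $m(Bx)$) into the $x$-independent $m(KB)$ (resp.\ $m(B)$). This is where the standing hypothesis that $G$ is unimodular is genuinely used: without right-invariance the bound would acquire a factor of the modular function $\Delta(x)$ and fail to be uniform in $x$.
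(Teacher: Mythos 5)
Your proof is correct and follows essentially the same route as the paper: the forward directions via the standard estimates of Section~\ref{subsec:se} (the paper cites their packaged form in Lemma~\ref{lem:seLepDel}, you invoke Lemmas~\ref{lem:tbb} and~\ref{lem:tba} directly), and the converses by extracting a witness $K$ from the inf/sup, testing at $A=Bx$, and using unimodularity. Your explicit symmetrization $B_l=KB\cup(KB)^{-1}\cup W$ just spells out a bookkeeping step the paper leaves implicit.
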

\begin{proof} (c) follows from (a) and (b). The  claims (a $\Rightarrow$) and (b $\Rightarrow$) follow from Lemma~\ref{lem:seLepDel}. In order to prove the remaining claims, fix a Haar measure $m$ on $G$ .

\noindent (a $\Leftarrow$) Assume $\Lep_\nu^+<\infty$. Then there exist a compact set $K$ and a finite constant $C$ such that $\nu(A)\le C m(KA)$ for all compact sets $A$ of positive measure. Choosing $A=Bx$ for some compact symmetric unit neighbouhood $B$, the claim follows from unimodularity of $G$.

\noindent (b $\Leftarrow$) Assume that $\Lep_\nu^->0$. Then there exist a compact set $K$ and a positive constant $C$ such that $\nu(KA)\ge C m(A)$ for all compact sets $A$ of positive measure. Choosing $A=Bx$ for some compact symmetric unit neighbouhood $B$, the claim follows from unimodularity of $G$.
\end{proof}

Due to the above standard estimates, Leptin densities of Delone measures characterise amena\-bility of unimodular locally compact groups.

\begin{lemma}\label{lem:elprop}
Let $G$ be a unimodular locally compact group. Let $\nu$ be any Delone measure on $G$. Then the following assertions are equivalent.
\begin{itemize}
\item[(i)] $G$ is amenable.
\item[(ii)] $\Lep^-_\nu\le \Lep^+_\nu$.
\item[(iii)] $\Lep^-_{\nu}$ is finite.
\item[(iv)] $\Lep^+_{\nu}$ is positive.
\end{itemize}
\end{lemma}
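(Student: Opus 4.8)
The plan is to reduce everything to the standard estimates of Lemma~\ref{lem:seLepDel} together with the Leptin dichotomy of Lemma~\ref{ref:lemcharamen}, which asserts $I(G)\in\{1,\infty\}$. The crucial observation is that, because $\nu$ is a Delone measure, one may choose $C_l>0$ and $C_u<\infty$, together with compact symmetric unit neighborhoods $B_l,B_u$ (so that $m(B_l^2),m(B_u)\in(0,\infty)$), for which both the upper and the lower standard estimates apply simultaneously. Consequently the prefactors $C_l/m(B_l^2)$ and $C_u/m(B_u)$ occurring in Lemma~\ref{lem:seLepDel} are all strictly positive and finite, so finiteness and positivity of the Leptin densities $\Lep_\nu^{\pm}$ are governed entirely by the value of $I(G)$. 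The whole statement then follows from a single case distinction driven by Theorem~\ref{theo:am}, which identifies (i) with $I(G)=1$.

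First I would treat the amenable case. Substituting $I(G)=1$ (hence $I(G)^{-1}=1$) into Lemma~\ref{lem:seLepDel} yields
\[
0<\frac{C_l}{m(B_l^{2})}\le \Lep_\nu^-\le\frac{C_u}{m(B_u)}<\infty,\qquad
0<\frac{C_l}{m(B_l^{2})}\le \Lep_\nu^+\le\frac{C_u}{m(B_u)}<\infty,
\]
so that both $\Lep_\nu^-$ and $\Lep_\nu^+$ are at once positive and finite; in particular (iii) and (iv) hold. The inequality (ii) is then immediate from Lemma~\ref{lem:Smp}. Hence (i) implies each of (ii), (iii) and (iv).

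It remains to establish the converses, which I would obtain by contraposition through the non-amenable case. If $G$ is not amenable, then $I(G)=\infty$ by the dichotomy, and Lemma~\ref{lem:seLepDel} forces $\Lep_\nu^-\ge (C_l/m(B_l^2))\cdot I(G)=\infty$ and $\Lep_\nu^+\le (C_u/m(B_u))\cdot I(G)^{-1}=0$. Since $\Lep_\nu^+\ge 0$ always holds (both $\nu$ and $m$ being positive), this gives $\Lep_\nu^-=\infty$ and $\Lep_\nu^+=0$. Thus (iii) fails, (iv) fails, and (ii) fails because $\infty\not\le 0$. Contraposing, each of (ii), (iii) and (iv) implies (i), and the four assertions are equivalent.

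I expect no genuine obstacle here: the analytic content has already been absorbed into the standard estimates, and the Delone hypothesis is precisely what guarantees that both the upper and the lower estimate are simultaneously available with strictly positive and finite constants. The only points meriting a word of care are that these prefactors are nonzero and finite, so that the value of $I(G)$ alone decides the outcome, and that nonnegativity of $\Lep_\nu^+$ upgrades the bound $\Lep_\nu^+\le 0$ to the equality $\Lep_\nu^+=0$ in the non-amenable case.
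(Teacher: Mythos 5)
Your proof is correct and rests on exactly the same ingredients as the paper's: the standard estimates of Lemma~\ref{lem:seLepDel}, the dichotomy $I(G)\in\{1,\infty\}$ from Lemma~\ref{ref:lemcharamen} together with Theorem~\ref{theo:am}, and Lemma~\ref{lem:Smp} for the inequality in (ii). The only difference is organizational — you split into the amenable and non-amenable cases and settle all four statements at once, whereas the paper runs the cyclic chain (i)$\Rightarrow$(ii)$\Rightarrow$(iii)$\Rightarrow$(iv)$\Rightarrow$(i) — so the mathematical content is essentially identical.
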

\begin{proof}
The implication (i) $\Rightarrow$ (ii) is Lemma~\ref{lem:Smp}. For the implication (ii) $\Rightarrow$ (iii) assume $\Lep^-_\nu=\infty$. Then (ii) implies $\Lep^+_\nu=\infty$. Hence by Lemma~\ref{lem:seLepDel} we have $I(G)=0$ and thus $L^-_\nu=0$, which is a contradiction. For the implication (iii) $\Rightarrow$ (iv), note that (iii) excludes the case $I(G) = \infty$ by   Lemma~\ref{lem:seLepDel}. In view of Lemma~\ref{ref:lemcharamen}, we have $I(G) = 1$.
But then $\Lep^+_\nu>0$ by Lemma~\ref{lem:seLepDel}.  For the implication (iv) $\Rightarrow$ (i), note that (iv) excludes the case $I(G) = \infty$ by Lemma~\ref{lem:seLepDel}, hence  $I(G)=1$ by Lemma~\ref{ref:lemcharamen}. Thus, $G$ is amenable by Theorem~\ref{theo:am}.
\end{proof}

\section{Leptin densities and Beurling densities}

\subsection{Leptin densities and Beurling densities}

Often uniform densities are used,  which are invariant under shifting the averaging sequence or the measure. These densities are sometimes called Beurling densities or Beurling-Landau densities, see \cite[p.~346]{B89} and \cite[p.~47]{L67} for $G=\mathbb R^d$. In ergodic theory they are known by the name Banach densities. In fact these densities can be defined without resorting to an averaging sequence. For discrete groups, this is studied in \cite{DHZ15}. The case $G=\mathbb R^d$ is considered in \cite{G13} by a linear functional approach.

\begin{defi}[Beurling densities]\label{prop:sfs}
Assume that $G$ is a unimodular locally compact group with left Haar measure $m$.
Let $\nu$ be a measure on $G$  and consider
\begin{displaymath}
B^-_\nu=\sup_{A\in\cK_p} \inf_{s\in G} \frac{\nu(As)}{m(A)}\ ,\qquad
B^+_\nu=\inf_{A\in \cK_p}\sup_{s\in G} \frac{\nu(As)}{m(A)} \ .
\end{displaymath}
We call $B_{\nu}^{-}$ and $B_{\nu}^{+}$ the lower Beurling density respectively the upper Beurling density of the measure $\nu$. If $B_{\nu}^{-}=B_{\nu}^{+}=B_\nu$, we call $B_\nu$ the Beurling density of $\nu$.
\end{defi}

In the following, we will suppress the subscript $\nu$ if the reference to the measure is clear.

\begin{remark}\label{rem:Bfin}
Note that $B^+_\nu<\infty$ if $\nu$ is upper translation bounded, as a consequence of Lemma~\ref{lem:tbb}.
The attribution to Beurling is motivated by Proposition~\ref{def:bdensities} below, which states that, for unimodular groups, the above densities coincide with a definition based on averaging.
\end{remark}

Let us discuss the relation between Beurling densities and Leptin densities.
Beyond the amenable situation, these notions differ. Indeed, for any left Haar measure $m$ on a unimodular non-amenable group $G$, we have $B^{+}_{m} = B^{-}_{m} = 1$, but $\Lep^-_m=\infty$ and $\Lep^+_m=0$.
It follows from the next theorem that, for a locally finite measure $\nu$ on a unimodular amenable group, the apriori different notions of Beurling density and Leptin density coincide. Precisely, the lower Beurling density of $\nu$ coincides with its lower Leptin density and the upper Beurling density of $\nu$ coincides with its upper Leptin density.

\begin{theorem}\label{BduD1}
Assume that $G$ is a unimodular locally compact group.  Consider any locally finite measure  $\nu$ on $G$. For the densities associated to $\nu$ we then have
\begin{displaymath}
B^-_\nu\le \Lep^-_\nu\le B^-_\nu \cdot I(G)\ , \qquad  I(G)^{-1} \cdot B^+_\nu \le \Lep^+_\nu   \le B^+_\nu \ .
\end{displaymath}
In particular if $G$ is additionally amenable, then we have $B^-_\nu=\Lep^-_\nu\le \Lep^+_\nu=  B^+_\nu$.
\end{theorem}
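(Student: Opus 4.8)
The plan is to establish the two chains of inequalities $B^-_\nu\le \Lep^-_\nu\le B^-_\nu\cdot I(G)$ and $I(G)^{-1}\cdot B^+_\nu\le \Lep^+_\nu\le B^+_\nu$, after which the amenable case follows immediately by invoking Theorem~\ref{theo:am} (so that $I(G)=1$) together with Lemma~\ref{lem:Smp} (which gives $\Lep^-_\nu\le\Lep^+_\nu$). I will only spell out the argument for the lower densities in detail, as the upper-density chain is entirely analogous, with infima and suprema exchanged and with $m(KA)$ replacing $m(A)$ in the denominator.

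First I would prove $B^-_\nu\le\Lep^-_\nu$. The idea is that in the definition of $\Lep^-_\nu=\sup_K\inf_A \nu(KA)/m(A)$ one is free to pick $K=\{e\}$ (or, since we may assume $e\in K$, any $K$ containing the identity), which gives $\Lep^-_\nu\ge\inf_{A\in\cK_p}\nu(A)/m(A)$. On the other hand, by left-translation invariance of both $\nu$'s behaviour and the Haar measure, for fixed $A$ and any $s\in G$ we have $\nu(As)/m(A)=\nu(As)/m(As)$, so that $\inf_{A}\nu(A)/m(A)$ already dominates $\inf_A\inf_s\nu(As)/m(A)$; taking the supremum over $A$ on the right-hand side recovers $B^-_\nu=\sup_A\inf_s\nu(As)/m(A)$. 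The cleaner route is simply to observe that for every $A\in\cK_p$ and every $s\in G$ the set $As$ is itself compact of positive measure, hence $\inf_{A'\in\cK_p}\nu(A')/m(A')\le \nu(As)/m(As)=\nu(As)/m(A)$; taking $\inf_s$ then $\sup_A$ gives $\Lep^-_\nu\ge B^-_\nu$.

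The harder direction is $\Lep^-_\nu\le B^-_\nu\cdot I(G)$, and I expect this to be the main obstacle. Fix $K\in\cK$; I want to bound $\inf_{A}\nu(KA)/m(A)$ by $B^-_\nu\cdot I(G)$. The key estimate is that for a fixed compact $A$ one should compare the single block $KA$ against a disjoint reshuffling of translates of $A$, using a packing/covering argument in the style of Lemma~\ref{lem:nOW}: cover $KA$ efficiently by right-translates $As_j$ so that $\nu(KA)\le\sum_j\nu(As_j)\le \bigl(\sup_s\nu(As)\bigr)\cdot(\text{number of translates})$, while the number of translates needed is controlled by the ratio $m(KA)/m(A)$ up to the geometric constant captured by $I(G)$. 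Concretely, I would feed this into the definition of $B^-_\nu$ to write $\nu(As)\le$ (something comparable to $B^-_\nu\, m(A)$) and multiply by the covering count, whose supremum over $A$ of $m(KA)/m(A)$ is exactly $I(G)$. The delicate point is getting the quantifiers right: $I(G)=\sup_K\inf_A m(KA)/m(A)$ is an infimum over $A$, whereas the covering count grows like $m(KA)/m(A)$ for the specific $A$ at hand, so one must either pass to the sup over $K$ carefully or exploit that $\inf_A m(KA)/m(A)$ bounds each individual ratio from below only in the amenable limit. I anticipate the clean formulation is to fix $\varepsilon>0$, choose $A$ nearly attaining $\inf_A\nu(KA)/m(A)$, cover $KA$ by right-translates of a near-optimal test set for $B^-_\nu$, and absorb the overcounting into the factor $I(G)$; the bookkeeping of these two nested optimisations is where the real work lies.

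Finally, the upper-density chain is proved by the symmetric argument: $\Lep^+_\nu\le B^+_\nu$ comes from choosing the trivial $K$ in $\Lep^+_\nu=\inf_K\sup_A\nu(A)/m(KA)$, and $I(G)^{-1}B^+_\nu\le\Lep^+_\nu$ from the same covering estimate with the inequalities reversed. In the amenable case $I(G)=1$ collapses both chains to $B^-_\nu=\Lep^-_\nu$ and $\Lep^+_\nu=B^+_\nu$, and Lemma~\ref{lem:Smp} supplies the middle inequality $\Lep^-_\nu\le\Lep^+_\nu$, completing the statement.
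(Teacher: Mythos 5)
There is a genuine gap, and it sits exactly where you declared the problem easy. Your argument for $B^-_\nu\le \Lep^-_\nu$ only establishes that the quantity $c:=\inf_{A'\in\cK_p}\nu(A')/m(A')$ satisfies both $c\le \Lep^-_\nu$ and $c\le B^-_\nu$: from $c\le \nu(As)/m(As)=\nu(As)/m(A)$ you take $\inf_s$ and then $\sup_A$ and land at $c\le B^-_\nu$, which is a common \emph{lower} bound for the two densities and says nothing about their relative order. The inequality $B^-_\nu\le\Lep^-_\nu$ (and its dual $\Lep^+_\nu\le B^+_\nu$) is in fact the delicate part of the theorem. The difficulty is structural: $B^-_\nu$ hands you one good set $A$ with $\nu(As)\ge (B^-_\nu-\varepsilon)\,m(A)$ for all $s$, i.e.\ control over right-translates of a \emph{fixed} set, whereas $\Lep^-_\nu$ requires $\nu(AS)\ge(B^-_\nu-\varepsilon)\,m(S)$ for \emph{every} compact $S$ of positive measure. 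The paper bridges this with a Tonelli averaging identity, $\int_A\nu(aB)\,\mathrm{d}m(a)=\int_B\nu(Ab)\,\mathrm{d}m(b)$ (Lemma~\ref{lem:simpobs}, using unimodularity), which yields: if $\nu(Ab)\ge C\,m(A)$ for all $b\in B$, then $\nu(aB)\ge C\,m(B)$ for some $a\in A$ (Lemma~\ref{lem:ineq}); applying this with $B=S$ and using $aS\subseteq AS$ closes the gap. No ingredient of this kind appears in your sketch, and without one the inequality does not follow.

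Conversely, the inequality you single out as the main obstacle, $\Lep^-_\nu\le B^-_\nu\cdot I(G)$, is the elementary one and needs no packing or covering argument: for fixed $K$ and $A$ write $\nu(KAs)/m(A)=\bigl(m(KA)/m(A)\bigr)\cdot\bigl(\nu(KAs)/m(KA)\bigr)$, note that $\inf_{A'}\nu(KA')/m(A')$ is unchanged when $A$ is replaced by $As$ (unimodularity), and observe that $KA$ is itself an admissible test set for $B^-_\nu$, so $\inf_s\nu(KAs)/m(KA)\le B^-_\nu$; taking $\inf_A$ and then $\sup_K$ produces the factor $I(G)$. Your proposed covering of $KA$ by right-translates $As_j$ would in any case bound $\nu(KA)$ by a multiple of $\sup_s\nu(As)$, which is the quantity relevant to $B^+_\nu$ rather than $B^-_\nu$, so that route does not yield the claimed inequality either. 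The final reduction of the amenable case to $I(G)=1$ together with Lemma~\ref{lem:Smp} is correct.
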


\subsection{Proof of Theorem~\ref{BduD1}}

We will treat the four inequalities separately. The inequalities $\Lep^-\le B^-\cdot I(G)$ and $B^+\le \Lep^+\cdot I(G)$ follow from elementary estimates.
\begin{proof}[Proof of $\Lep^-\le B^-\cdot I(G)$]
Consider any $A \in \mathcal K_p$, any nonempty $K \in \mathcal K$ and any $s\in G$. We have
\begin{displaymath}
\frac{\nu(KAs)}{m(A)}=\frac{m(KA)}{m(A)} \cdot \frac{\nu(KAs)}{m(KA)} \ .
\end{displaymath}
Taking the infimum over $A\in \mathcal K$ on the lhs and using unimodularity of $G$, we see that the lhs is independent of $s\in G$. Hence we may take the infimum over $s$ on the rhs and arrive at
\begin{displaymath}
\inf_{A\in \mathcal K_p}\frac{\nu(KA)}{m(A)}\le \frac{m(KA)}{m(A)} \cdot  \inf_{s\in G}\frac{\nu(KAs)}{m(KA)}
\le \frac{m(KA)}{m(A)} \cdot B^- \ .
\end{displaymath}
As the lhs is independent of $A\in \mathcal K_p$, we may take the infimum over $A$ on the rhs and the supremum over $K$ on both sides to conclude
\begin{displaymath}
\Lep^-=\sup_{K\in \mathcal K} \inf_{A\in \mathcal K_p}\frac{\nu(KA)}{m(A)}\le
\sup_{K\in \mathcal K}\inf_{A\in \mathcal K_p}\frac{m(KA)}{m(A)} \cdot B^- \le I(G)\cdot B^-\ .
\end{displaymath}
\end{proof}

\begin{proof}[Proof of $I(G)^{-1}\cdot B^+\le \Lep^+$]
Consider any $A \in \mathcal K_p$, nonempty $K \in \mathcal K$   and any $s\in G$ and note that by unimodularity of $G$ we have
\begin{displaymath}
\frac{m(A)}{m(KA)} \cdot \frac{\nu(As)}{m(A)} =\frac{\nu(As)}{m(KA)} \le \sup_{A'\in \mathcal K_p} \frac{\nu(A')}{m(KA')} \ ,
\end{displaymath}
where the rhs is independent of $s\in G$. Hence we may take the supremum over $s$ on the lhs and arrive at
\begin{displaymath}
\frac{m(A)}{m(KA)} \cdot  B^+\le\frac{m(A)}{m(KA)} \cdot \sup_{s\in G}\frac{\nu(As)}{m(A)} \le \sup_{A'\in \mathcal K_p} \frac{\nu(A')}{m(KA')} \ .
\end{displaymath}
As the rhs is independent of $A\in \mathcal K_p$,
we may take the supremum over $A$ on the lhs and the infimum over $K$ on both sides to conclude
$I(G)^{-1} \cdot  B^+\le \Lep^+$.
\end{proof}

The proofs of $B^-\le \Lep^-$ and of $\Lep^+\le B^+$ are more delicate.  We will follow ideas from \cite[Lem.~2.9]{DHZ15}. Note the following two elementary results.

\begin{lemma}\label{lem:simpobs}
Let $G$ be a unimodular locally compact group. Fix a left Haar measure $m$ on $G$ and let $\nu$ be a locally finite measure on $G$.
Then we have for any compact sets $A,B\subseteq G$ that
\begin{equation}\label{eq1}
\int_A \nu(aB)\, {\rm d}m(a)=\int_B \nu(Ab)\, {\rm d}m(b) \ .
\end{equation}
\end{lemma}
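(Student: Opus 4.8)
The plan is to unfold the definition of $\nu(aB)$ as an integral against $\nu$, interchange the order of integration by a Tonelli-type argument, and then apply a change of variables that exploits unimodularity. First I would write, for fixed $a\in G$,
\begin{displaymath}
\nu(aB)=\int_G \ifu{B}(a^{-1}y)\,\dd\nu(y) \ ,
\end{displaymath}
which is valid since $aB=\{y\in G: a^{-1}y\in B\}$. Because the map $(a,y)\mapsto a^{-1}y$ is continuous, the function $(a,y)\mapsto\ifu{B}(a^{-1}y)$ is a nonnegative Borel function on $G\times G$, and similarly for $\ifu{A}(a)\ifu{B}(a^{-1}y)$. Consequently
\begin{displaymath}
\int_A\nu(aB)\,\dd m(a)=\int_G\int_G \ifu{A}(a)\,\ifu{B}(a^{-1}y)\,\dd m(a)\,\dd\nu(y) \ ,
\end{displaymath}
once the interchange of integrations is justified.

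To justify the interchange I would restrict to the relevant compact pieces. The integrand vanishes unless $a\in A$ and $y\in aB\subseteq AB$, and $AB$ is compact, so by local finiteness of $\nu$ the measure $\nu$ is finite on the $y$-region that matters; likewise $m$ restricted to $A$ is finite. Restricting both measures to these compact sets yields finite measures, and the standard Fubini--Tonelli theorem for the associated finite product measure applies to the nonnegative Borel integrand. This circumvents the lack of global $\sigma$-finiteness that can otherwise obstruct product-measure arguments on locally compact groups.

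The core step is a change of variables in the inner integral for fixed $y$. Substituting $a=yb^{-1}$, equivalently $b=a^{-1}y$, gives $\ifu{A}(a)=\ifu{A}(yb^{-1})$ and $\ifu{B}(a^{-1}y)=\ifu{B}(b)$. The transformation $b\mapsto yb^{-1}$ is the composition of inversion with left translation by $y$: inversion preserves $m$ precisely because $G$ is unimodular, and left translation preserves the left Haar measure, so $\dd m(a)=\dd m(b)$. Hence
\begin{displaymath}
\int_G \ifu{A}(a)\,\ifu{B}(a^{-1}y)\,\dd m(a)=\int_G \ifu{A}(yb^{-1})\,\ifu{B}(b)\,\dd m(b) \ .
\end{displaymath}
Feeding this back and reversing the Tonelli interchange (again on the finite restrictions) produces
\begin{displaymath}
\int_A\nu(aB)\,\dd m(a)=\int_B\int_G \ifu{A}(yb^{-1})\,\dd\nu(y)\,\dd m(b)=\int_B \nu(Ab)\,\dd m(b) \ ,
\end{displaymath}
using $Ab=\{y\in G: yb^{-1}\in A\}$, which is the claimed identity.

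I expect the main obstacle to be the measure-theoretic bookkeeping rather than any deep idea: ensuring joint Borel measurability of the integrand on $G\times G$ and legitimately applying Fubini--Tonelli despite the possible absence of $\sigma$-finiteness, which the restriction to the compact sets $A$ and $AB$ resolves. The only genuinely structural input is unimodularity, entering through the inversion-invariance of $m$ in the change of variables; without it the Jacobian-type factor $\Delta(b)$ would appear and the symmetric identity would fail.
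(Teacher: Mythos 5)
Your proof is correct and follows essentially the same route as the paper's: both apply Tonelli to the double integral and then use inversion-invariance of the Haar measure (unimodularity) together with left-invariance to identify the two inner integrals, which in the paper appears as the set identity $m(B\cap A^{-1}x)=m(xB^{-1}\cap A)$ and in your write-up as the substitution $b\mapsto yb^{-1}$. Your extra care about restricting to the compact sets $A$ and $AB$ to justify Tonelli is a reasonable elaboration of what the paper states more briefly.
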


\begin{proof}
We evaluate the lhs and use Tonelli's theorem, which is applicable as $\nu$ is assumed to be locally finite, and $A$ and $B$ are compact sets. We obtain
\begin{displaymath}
\begin{split}
\int_A \nu(aB)\, {\rm d}m(a) &=
\int_A \int_{aB} {\rm d}\nu(x)\, {\rm d}m(a)
= \int_G \int_{A\cap xB^{-1}}\!\!\!\!\! {\rm d}m(a)\, {\rm d}\nu(x)=\int_G m(A\cap xB^{-1})\, {\rm d}\nu(x) \ ,
\end{split}
\end{displaymath}
where we used $x\in aB$ iff $a\in xB^{-1}$.
Evaluating the rhs and using Tonelli's theorem yield
\begin{displaymath}
\begin{split}
\int_B \nu(Ab)\, {\rm d}m(b) &=
\int_B \int_{Ab} {\rm d}\nu(x)\, {\rm d}m(b)
= \int_G \int_{B\cap A^{-1}x}\, {\rm d}m(b)\, {\rm d}\nu(x)=\int_G m(B\cap A^{-1}x)\, {\rm d}\nu(x) \ ,
\end{split}
\end{displaymath}
where we used $x\in Ab$ iff $b\in A^{-1}x$.
Now equality follows from unimodularity and left invariance of the Haar measure. Indeed we have
\begin{displaymath}
m(B\cap A^{-1}x)=m(B^{-1}\cap x^{-1}A)=m(xB^{-1}\cap A) \ .
\end{displaymath}
\end{proof}

\begin{lemma}\label{lem:ineq}
Let $G$ be a unimodular locally compact group. Fix a left Haar measure $m$ on $G$ and let $\nu$ be a locally finite measure on $G$.
Let $A,B\subseteq G$ be compact sets. Assume that there is a positive finite constant $C=C(A,B)$ such that for all $b\in B$ we have
\begin{displaymath}
\nu(Ab) \ge C\cdot m(A) \ .
\end{displaymath}
Then there exists $a\in A$ such that
\begin{displaymath}
\nu(aB) \ge C\cdot m(B) \ .
\end{displaymath}
\end{lemma}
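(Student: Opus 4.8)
The plan is to combine the integral identity of Lemma~\ref{lem:simpobs} with a mean value (averaging) argument over the set $A$. First I would apply Lemma~\ref{lem:simpobs} to the compact sets $A$ and $B$ to obtain
\[
\int_A \nu(aB)\, {\rm d}m(a) = \int_B \nu(Ab)\, {\rm d}m(b) \ .
\]
Since $\nu$ is locally finite and $AB$ is compact, both sides are finite, and the integrand $a\mapsto \nu(aB)$ is measurable; this measurability is precisely what Tonelli's theorem delivers in the proof of Lemma~\ref{lem:simpobs}, so nothing extra is needed here.

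Next I would insert the hypothesis. As $\nu(Ab)\ge C\cdot m(A)$ for every $b\in B$, integrating this inequality over $B$ yields
\[
\int_B \nu(Ab)\, {\rm d}m(b) \ge C\cdot m(A)\cdot m(B) \ ,
\]
and the identity above then gives $\int_A \nu(aB)\, {\rm d}m(a) \ge C\cdot m(A)\cdot m(B)$. I would finish by an averaging argument over $A$, phrased as a contradiction: suppose $\nu(aB) < C\cdot m(B)$ for \emph{every} $a\in A$. Setting $g(a)=C\,m(B)-\nu(aB)>0$ and decomposing $A=\bigcup_{n}\{a\in A:g(a)>1/n\}$, positivity of $m(A)$ forces one of these pieces to have positive measure, whence $\int_A g\,{\rm d}m>0$, i.e.
\[
\int_A \nu(aB)\, {\rm d}m(a) < C\cdot m(A)\cdot m(B) \ ,
\]
contradicting the previous bound. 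Hence some $a\in A$ satisfies $\nu(aB)\ge C\cdot m(B)$.

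The step I would treat most carefully is this final averaging, which quietly requires $m(A)>0$: if $m(A)=0$ then $C\cdot m(A)=0$ renders the hypothesis vacuous and the conclusion can genuinely fail (take $A$ a single point and $\nu$ a point mass disjoint from $AB$). Everything else — the identity, the measurability, and the bound on $\int_B \nu(Ab)\,{\rm d}m(b)$ — is routine. In the intended application $A$ is drawn from $\mathcal K_p$, so no difficulty arises; it may nonetheless be worth recording $A\in\mathcal K_p$ explicitly in the statement, so that the mean value step is unambiguously licensed.
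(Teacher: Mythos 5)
Your proof is correct and follows essentially the same route as the paper's: both integrate the hypothesis over $B$, invoke the identity of Lemma~\ref{lem:simpobs}, and derive a contradiction from assuming $\nu(aB)<C\cdot m(B)$ for all $a\in A$. Your observation that the final strict-inequality step requires $m(A)>0$ is a legitimate fine point which the paper's proof passes over silently; it is harmless in context, since the lemma is only ever applied with $A\in\mathcal K_p$.
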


\begin{remark}\label{lem:rev}
In the two inequalities of the above lemma, replace  ``$\ge$'' by ``$\le$''. Then the resulting statement remains true, as the following proof shows.
\end{remark}

\begin{proof}
Let $A,B$ be compact sets and assume that the assertion of the lemma is wrong. Then we have for all $a\in A$ that $\nu(aB)< C\cdot m(B)$. Integrate this inequality to obtain
\begin{equation}\label{eq2}
\int_{A}\nu(aB)\, {\rm d}m(a)< C \cdot m(A)\cdot m(B) \ .
\end{equation}
On the other hand, the inequality in the assumption of the lemma can be integrated to yield
\begin{equation}\label{eq3}
\int_B\nu(Ab) \, {\rm d}m(b) \ge  C \cdot m(A)\cdot m(B) \ .
\end{equation}
But the left hand sides of the above two inequalities are equal due to Lemma~\ref{lem:simpobs}. This is contradictory.
\end{proof}

\begin{proof}[Proof of $B^-\le \Lep^-$ ]
Recall from Definition~\ref{prop:sfs} that
\begin{displaymath}
B^-=\sup_{A\in\cK_p} \inf_{s\in G} \frac{\nu(As)}{m(A)}\ , \qquad \Lep^-=\sup_{K\in\cK}\inf_{A\in \cK_p}  \frac{\nu(KA)}{m(A)}
\ .
\end{displaymath}
Fix arbitrary $\varepsilon>0$ and take $A\in\cK_p$ such that for all $s\in G$ we have
\begin{displaymath}
B^--\varepsilon \le \frac{\nu(As)}{m(A)} \ .
\end{displaymath}
Let now $S\in \cK_p$ be arbitrary. Recall that $m(A)\cdot (B^--\varepsilon)\le \nu(As)$ for all $s\in S$. Hence by Lemma~\ref{lem:ineq} there exists $a\in A$ such that $m(S)\cdot (B^--\varepsilon)\le \nu(aS)\le \nu(AS)$. Hence we have
\begin{displaymath}
B^--\varepsilon \le \frac{\nu(AS)}{m(S)} \ .
\end{displaymath}
As $S\in\cK_p$ and $\varepsilon>0$ were arbitrary, the claim follows.
\end{proof}

\begin{proof}[Proof of $\Lep^+\le B^+$]
Recall from Definition~\ref{prop:sfs} that
\begin{displaymath}
\Lep^+=\inf_{K\in \cK}\sup_{A\in \cK_p}  \frac{\nu(A)}{m(KA)}\ , \qquad B^+=\inf_{A\in \cK_p} \sup_{s\in G} \frac{\nu(As)}{m(A)}\ .
\end{displaymath}
Assume without loss of generality that $B^+<\infty$. Fix arbitrary $\varepsilon>0$ and take $A\in \cK_p$ such that for all $s\in G$ we have
\begin{displaymath}
\frac{\nu(As)}{m(A)} \le B^++\varepsilon \ .
\end{displaymath}
Let now $S\in \cK_p$ be arbitrary. Since $\nu(At)\le (B^++\varepsilon)\cdot m(A)$ for all $t\in A^{-1}S$,  by Remark~\ref{lem:rev} there exists $a\in A$ such that $\nu(aA^{-1}S)\le (B^++\varepsilon)\cdot m(A^{-1}S)$. Since $S\subseteq a A^{-1}S$, we have
\begin{displaymath}
\frac{\nu(S)}{m(A^{-1}S)} \le B^++\varepsilon \ .
\end{displaymath}
As $S\in \cK_p$ and $\varepsilon>0$ were arbitrary, the claim follows.
\end{proof}

\section{F{\o}lner conditions and F{\o}lner nets}

As for F{\o}lner sequences or nets in locally compact groups there are various definitions in the literature depending on the definition of boundary. In this section we clarify some subtle connections, most of which can be found in the literature.

\subsection{Various boundaries}

The geometric intuition behind amenability is that, for suitable sets $A$, the measure of the boundary of $A$ is small when compared to the measure of $A$. Traditionally this is formalised using the so-called F{\o}lner boundaries
\[
\delta^K A := KA \, \triangle \, A = \big(KA \cap A^c \big) \cup \big( (KA)^c \cap A \big) \ .
\]
Here and also in the sequel of this text, $\triangle$ denotes the symmetric set difference, i.e., $A\, \triangle\, B=(A\cap B^c)\cup(A^c\cap B)$.  We write $A^c:= G \setminus A$ for the complement set of $A$.

\smallskip

As we discuss in the following sections, it is often useful to consider different notions of boundaries such as the {\em van Hove boundary}, defined as
\[
\partial^KA:=(KA\cap \overline{A^c})\cup (K^{-1} \overline{A^c}\cap A)
\]
or the {\em strong F{\o}lner boundary}, defined as
\[
\partial_K A:=K^{-1}A\cap K^{-1}A^c= \{g\in G: Kg \cap A\ne\varnothing \wedge Kg\cap A^c\ne\varnothing \}
\]
for sets $K, A \subseteq G$.
\begin{remark} \label{rem:FolvH}
	The attribution to van Hove is due to Schlottmann \cite{Martin2}. Van Hove boundaries are often used in model set analysis, see \cite{BaakeLenz2004} and \cite[Sec.~2]{mr13} for further background.
	Strong F{\o}lner boundaries have been introduced and used by Ornstein and Weiss in \cite{OW87}.
	The name strong F{\o}lner boundary is coined in \cite{PS16}. Also note the slightly different definition in \cite{H21}, which is more in line with the notion of van Hove boundary, compare \cite[Rem.~2.2(ii)]{H21}.
Such boundaries naturally arise in tiling problems leading to sub- or almost additive convergence results beyond Euclidean space, cf.\@ e.g.\@ \cite{LW00, Kr07, CCK14, PS16, H21}.
\end{remark}

\smallskip

Note that the van Hove boundary and the strong F{\o}lner boundary are both monotonic in $K$, in contrast to the classical F{\o}lner boundary. Strong F{\o}lner boundaries additionally satisfy the simple relation $L\partial_KA\subseteq \partial_{KL^{-1}}A$, which is very convenient in calculations.  Let us collect some relations between the three types of boundary for later use.

\begin{lemma}[Boundary comparison]\label{lem:vHsF}
	Let $G$ be a locally compact group.
	Let $K\subseteq G$ be any symmetric unit neighborhood and $A\subseteq G$ be arbitrary. Then
	\begin{displaymath}
	\partial_K A\subseteq \partial^KA\subseteq \partial_{K^2}A \ , \qquad
	\delta^KA\subseteq \partial_K A\subseteq K \delta^KA  \ ,\qquad
	\partial_K(KA)\subseteq \delta^{K^2}A  \ .
	\end{displaymath}
\end{lemma}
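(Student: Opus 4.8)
The plan is to prove each of the six displayed inclusions separately by a direct point-chase, exploiting only two structural facts about a symmetric unit neighborhood $K$: that $e\in K$ (so $S\subseteq KS$ for every $S$, in particular $A\subseteq KA$) and that $K=K^{-1}$ (so $g\in Ka \Leftrightarrow a\in Kg$). Before starting I would record the simplifications these facts yield: since $A\subseteq KA$ we have $\delta^K A=KA\triangle A=KA\cap A^c$, and the strong F{\o}lner boundary can be read either as $\partial_K A=KA\cap KA^c$ or, more usefully, in the membership form $g\in\partial_K A \Leftrightarrow Kg\cap A\ne\varnothing$ and $Kg\cap A^c\ne\varnothing$. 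The only non-combinatorial ingredient anywhere will be a single topological observation: $K$ contains an open unit neighborhood $U$, so $Ug$ is an open neighborhood of any $g$, and hence $g\in\overline{A^c}$ forces $Ug\cap A^c\ne\varnothing$ and thus $Kg\cap A^c\ne\varnothing$, i.e. $g\in KA^c$.

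First I would dispatch the two F{\o}lner comparisons. For $\delta^K A\subseteq\partial_K A$ one only needs $A^c\subseteq KA^c$, giving $KA\cap A^c\subseteq KA\cap KA^c$. The inclusion $\partial_K A\subseteq K\delta^K A$ is the step I expect to be the main obstacle: the naive witness $a_0\in A^c$ with $g\in Ka_0$ produced by $g\in KA^c$ only lies in $K^2A$, not in $KA$, so a priori it need not belong to $\delta^K A$. The fix is a case split on which side of $A$ the point $g$ lies. If $g\in A^c$, then $g\in KA\cap A^c=\delta^K A\subseteq K\delta^K A$ directly. If $g\in A$, then for the witness $a_0$ one has $g\in Ka_0$ together with $g\in A$, so $Ka_0\cap A\ne\varnothing$, i.e. $a_0\in KA$; hence $a_0\in KA\cap A^c=\delta^K A$ and $g\in Ka_0\subseteq K\delta^K A$.

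Next the two van Hove comparisons. For $\partial_K A\subseteq\partial^K A$ I would chase $g\in KA\cap KA^c$, note $g\in K\overline{A^c}$ since $A^c\subseteq\overline{A^c}$, and split on $g\in A$ versus $g\in A^c$ to land in one of the two terms $KA\cap\overline{A^c}$ and $K\overline{A^c}\cap A$. The inclusion $\partial^K A\subseteq\partial_{K^2}A$ is where the topology and the squaring of $K$ enter. For $g$ in the first term $KA\cap\overline{A^c}$, the open-neighborhood argument at $g$ gives $Kg\cap A^c\ne\varnothing$, so $g\in KA\cap KA^c\subseteq K^2A\cap K^2A^c=\partial_{K^2}A$. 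For $g$ in the second term, $g\in K\overline{A^c}$ produces $y\in\overline{A^c}\cap Kg$, and applying the open-neighborhood argument at $y$ produces $z\in A^c\cap Ky\subseteq A^c\cap K^2g$ (since $y\in Kg$); together with $g\in A\subseteq K^2A$ this places $g$ in $\partial_{K^2}A$, and this extra composition $Ky\subseteq K^2g$ is exactly why $K^2$ rather than $K$ must appear.

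Finally, for $\partial_K(KA)\subseteq\delta^{K^2}A$ I would first note $\delta^{K^2}A=K^2A\cap A^c$ (again by $A\subseteq K^2A$) and that $\partial_K(KA)\subseteq K(KA)=K^2A$ automatically, so it suffices to show every $g\in\partial_K(KA)$ lies in $A^c$. Writing $g\in K(KA)^c$, pick $y\in(KA)^c$ with $y\in Kg$; since $y\notin KA$ means $Ky\cap A=\varnothing$ and $g\in Ky$, we conclude $g\notin A$. This finishes all three displayed lines. In summary, every inclusion is a short membership argument built from $e\in K$ and $K=K^{-1}$; the only genuinely delicate bookkeeping is the case analysis securing $\partial_K A\subseteq K\delta^K A$, and the only analytic input is the open-neighborhood argument converting closure membership into a nonempty intersection with $A^c$.
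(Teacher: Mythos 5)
Your proof is correct and follows essentially the same route as the paper's: each inclusion is verified directly from $e\in K$, the symmetry $K=K^{-1}$, the observation $\overline{A^c}\subseteq KA^c$ (which the paper also uses, via the same open-neighborhood argument), and the same case splits on $g\in A$ versus $g\in A^c$ for the inclusions $\partial_K A\subseteq K\delta^K A$ and $\partial_K A\subseteq\partial^K A$. The only cosmetic difference is that you argue pointwise in membership form where the paper manipulates the sets algebraically.
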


\begin{remark}
	The statements comparing the boundaries $\partial_K$ and $\delta^K$ can be found in \cite[Lem.~2.2]{Kr07}. The inclusion $\partial_K(KA) \subseteq \delta^{K^2}A$ has been observed in \cite{OW87} and leads to the uniform F{\o}lner condition (UFC) in \cite[p.~19]{OW87}. We provide the proofs for the sake of self-containment.
	\end{remark}

\begin{proof}
Let $K=K^{-1}$ be any symmetric unit neighborhood.
We have to show five inclusions which we enumerate by (i)--(v) in its order of appearance in the statement.

\noindent (i) To show $\partial_KA\subseteq \partial^KA$, let $x\in KA\cap KA^c$ and assume $x\notin KA\cap \overline{A^c}$. Then $x\in A$, which implies $x\in KA\cap KA^c\cap A=A\cap KA^c\subseteq A\cap K\overline{A^c}\subseteq \partial^KA$.

\noindent (ii) Noting $\overline{A^c}\subseteq KA^c$ we have $KA\cap \overline{A^c}\subseteq KA\cap KA^c\subseteq K^2A\cap K^2A^c$. Likewise we have $K\overline{A^c}\cap A\subseteq KKA^c\cap KA\subseteq K^2A^c\cap K^2 A$. We thus have $\partial^KA\subseteq \partial_{K^2}A$.

\noindent (iii) As $(KA)^c\subseteq KA^c$, we have
$\delta^K A=(KA\cap A^c)\cup ((KA)^c\cap A)\subseteq (KA\cap A^c)\cup (KA^c\cap A)\subseteq \partial_KA$.

\noindent (iv) We have $\partial_KA=KA\cap KA^c\cap (A\cup A^c)=(A\cap KA^c) \cup (KA\cap A^c)$. As we have $A\cap KA^c\subseteq K(KA\cap A^c)$,  this implies $\partial_KA\subseteq K\delta^KA$.

\noindent (v) We have $\partial_K(KA)=K (KA) \cap K(KA)^c =K^2A \cap K(KA)^c \subseteq K^2A \cap A^c \subseteq \delta^{K^2}A$.
\end{proof}

Let us denote by $\mathcal K_{s0}\subseteq \mathcal K$ the collection of compact sets which are symmetric and contain the identity. The following proposition characterizes amenability. It is well-known, we give a proof for the convenience of the reader.

\begin{prop}\label{prop:refine}
	Let $G$ be a locally compact group with left Haar measure $m$. Then the following are equivalent.
\begin{itemize}
\item[(i)]  For all $K\in\mathcal K$ we have $\inf_{A\in \mathcal K_p} m(KA)/m(A)=1$.
\item[(ii)]  For all $K\in\mathcal K$ we have  $\inf_{A\in \mathcal K_p} m(\delta^K A)/m(A)=0$.
\item[(iii)]  For all $K\in\mathcal K$ we have  $\inf_{A\in \mathcal K_p} m(\partial_K A)/m(A)=0$.
\item[(iv)]  For all $K\in\mathcal K$ we have  $\inf_{A\in \mathcal K_p} m(\partial^K A)/m(A)=0$.
\end{itemize}
In fact the claimed equivalences continue to hold if in any of the four statements (i) to (iv) the condition ``$K\in \mathcal K$'' is replaced by ``$K\in \mathcal K_{s0}$''.
\end{prop}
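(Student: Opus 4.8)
The plan is to prove the four statements first for the restricted index set $\mathcal{K}_{s0}$, where symmetry and $e\in K$ make the boundary identities transparent, and then to upgrade to the full $\mathcal{K}$ by a routine enlargement. Since every $\mathcal{K}$-statement trivially restricts to its $\mathcal{K}_{s0}$-counterpart, it suffices to (a) prove the four $\mathcal{K}_{s0}$-statements equivalent and (b) deduce each $\mathcal{K}$-statement from them; this closes the loop and gives all eight equivalences.

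For step (a), I would first settle (i)$\Leftrightarrow$(ii): when $K\in\mathcal{K}_{s0}$ we have $e\in K$, so $A\subseteq KA$ and $\delta^KA=KA\setminus A$, whence $m(\delta^KA)/m(A)=m(KA)/m(A)-1$; taking the infimum over $A\in\mathcal{K}_p$ gives the equivalence immediately. The equivalence (iii)$\Leftrightarrow$(iv) follows from the comparison $\partial_KA\subseteq\partial^KA\subseteq\partial_{K^2}A$ in Lemma~\ref{lem:vHsF}: the first inclusion yields (iv)$\Rightarrow$(iii), while for (iii)$\Rightarrow$(iv) one applies (iii) to $K^2\in\mathcal{K}_{s0}$ and invokes the second inclusion. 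Likewise $\delta^KA\subseteq\partial_KA$ gives (iii)$\Rightarrow$(ii) at once.

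The delicate step, and the one I expect to be the main obstacle, is (ii)$\Rightarrow$(iii). The naive attempt through $\partial_KA\subseteq K\delta^KA$ is doomed, since $m(K\delta^KA)$ is not controlled by $m(\delta^KA)$. Instead I would \emph{thicken the averaging set}: given $\varepsilon>0$, use (ii) for $K^2$ to produce $A\in\mathcal{K}_p$ with $m(\delta^{K^2}A)<\varepsilon\,m(A)$, and then test (iii) on $A':=KA\in\mathcal{K}_p$. The relation $\partial_K(KA)\subseteq\delta^{K^2}A$ of Lemma~\ref{lem:vHsF}, combined with $A\subseteq KA=A'$, gives $m(\partial_KA')\le m(\delta^{K^2}A)<\varepsilon\,m(A)\le\varepsilon\,m(A')$, so $\inf_{A'}m(\partial_KA')/m(A')=0$. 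This establishes (iii) and closes the cycle among the $\mathcal{K}_{s0}$-statements.

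For step (b), fix $K\in\mathcal{K}$ and pick a compact symmetric unit neighbourhood $\tilde K\supseteq K\cup K^{-1}$. Statement (i) transfers by the sandwich $1\le \inf_A m(KA)/m(A)\le \inf_A m(\tilde KA)/m(A)=1$, where $m(KA)\ge m(A)$ holds for every nonempty compact $K$ because $kA\subseteq KA$ for any $k\in K$ (alternatively, one may recall that $I(G)$ may be computed over $\mathcal{K}_{s0}$). Statements (iii) and (iv) transfer because the respective boundaries are monotone under the enlargement, $\partial_KA\subseteq\partial_{\tilde K}A$ and $\partial^KA\subseteq\partial^{\tilde K}A$. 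For (ii) the only nontrivial piece is $A\setminus KA$: if $a\in A$ with $a\notin KA$ then $K^{-1}a\subseteq A^c$, so $\tilde Ka$ meets $A$ (it contains $a$) and $A^c$ (it contains $K^{-1}a$), giving $a\in\partial_{\tilde K}A$; together with $KA\cap A^c\subseteq\delta^{\tilde K}A\subseteq\partial_{\tilde K}A$ this yields $\delta^KA\subseteq\partial_{\tilde K}A$, and (ii) over $\mathcal{K}$ then follows from the $\mathcal{K}_{s0}$-version of (iii).
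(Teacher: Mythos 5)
Your proof is correct and follows essentially the same route as the paper: the key step (ii)$\Rightarrow$(iii) via testing on the thickened set $KA$ and the inclusion $\partial_K(KA)\subseteq\delta^{K^2}A$ is exactly the paper's argument, and the remaining implications are the same applications of the boundary comparison lemma (the paper closes the cycle as (iii)$\Rightarrow$(iv)$\Rightarrow$(ii) rather than proving (iii)$\Leftrightarrow$(iv) and (iii)$\Rightarrow$(ii) separately, and transfers (ii) to general $K$ via $\delta^KA\subseteq\partial_{K^{-1}\cup\{e\}}A$ instead of your symmetric $\tilde K$, but these are cosmetic differences).
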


\begin{proof}
	We will first prove the claimed equivalences for $K\in \mathcal K_{s0}$.
	The equivalence of (i) and (ii)  is a direct consequence of $A \, \dot{\cup} \, \delta^K A= KA$, which holds as $e\in K$.
	For (ii) $\Rightarrow$ (iii), let $\varepsilon>0$ be arbitrary and choose $A'\in \mathcal K_p$ such that $m(\delta^{K^2} A')/m(A')<\varepsilon$. Letting $A=KA'$, we then estimate
	$\partial_K A = \partial_K(KA') \subseteq \delta^{K^2} A'$.
	As we have $m(A)=m(KA')\ge m(A')$, we obtain $m(\partial_KA)/m(A)\le m(\delta^{K^2}A')/m(A')<\varepsilon$. As $\varepsilon>0$ was arbitrary, then claim follows.

The implications (iii) $\Rightarrow$ (iv) and (iv) $\Rightarrow$ (ii) follow from the comparison lemma~ \ref{lem:vHsF}. Indeed that lemma states $\partial^KA\subseteq \partial_{K^2}A$ and $\delta^KA\subseteq \partial^{K}A$.
	
Consider now the general case $K\in \mathcal K$ in the above statements. Then statements $(i)$, $(iii)$, $(iv)$ hold if these statements hold for $K\in \mathcal K_{0s}$, as the corresponding measure ratios are monotonic in $K$. With respect to $(ii)$, note $\delta^KA\subseteq \partial_{K^{-1}\cup \{e\}}A$. Hence $(ii)$ follows from $(iii)$, which is already established.	
\end{proof}

\subsection{F{\o}lner nets}

Consider a Delone measure $\nu$ on an amenable locally compact group $G$. In that case, one might ask whether there exists a sequence $(A_n)_{n\in\mathbb N}$ of compact sets in $G$ such that the Leptin density $L_\nu^-$ is realised on that sequence, instead of taking the infimum over all $A\in \mathcal K_p$. Natural candidates are F{\o}lner sequences, which exist in any $\sigma$-compact amenable group. For general locally compact groups, amenability is characterized by the existence of F{\o}lner nets. We refer to \cite[Ch.~4]{P88} for the construction of F{\o}lner nets.
For the convenience of the reader, background about nets is collected in Appendix \ref{sec:nets}.

\begin{defi}[F{\o}lner net]\label{def:Folner}
Let $G$ be an amenable locally compact group with left Haar measure $m$. Let $(\mathbb I, \prec)$ be a directed poset and let $(A_i)_{i\in\mathbb I}\in \mathcal (\mathcal K_p)^{\mathbb I}$. Then $(A_i)_{i\in\mathbb I}$ is called a {\em F{\o}lner net} if
\begin{displaymath}
\sup_{K\in \mathcal K}\lim_{i\in \mathbb I} \frac{m(\delta^K A_i)}{m(A_i)} = 0   \ .
\end{displaymath}
\end{defi}

\begin{remark}
Note that any subnet of a F{\o}lner net is a F{\o}lner net.  
If $G$ is unimodular and $(A_i)_{i \in \mathbb{I}}$ is a F{\o}lner net, then $(A_ix_i)_{i\in \mathbb I}$ is a F{\o}lner net for every net $(x_i)_{i\in \mathbb I}\in G^{\mathbb{I}}$. This is a direct consequence of right-invariance of the Haar measure in that case.
For $\sigma$-compact amenable $G$, there always exists a F{\o}lner sequence. See the proof of \cite[Thm.~4.16]{P88}.
\end{remark}

Clearly, one can define alternative F{\o}lner nets or sequences using $\partial^K$ or $\partial_K$ as a notion of boundary. This has lead to the terminology of {\em van Hove sequences} or {\em strong F{\o}lner sequences} in the literature. Let us introduce these notions in the general context of nets. Namely, $(A_i)_{i \in \mathbb I} \in (\mathcal{K}_p)^{\mathbb I}$ is called {\em strong F{\o}lner net}, respectively a {\em van Hove net} if
\[
\sup_{K \in \mathcal{K}} \lim_{i \in \mathbb I} \frac{m\big( \partial_K A_i \big)}{m(A_i)} = 0, \quad \mbox{ respectively } \quad \sup_{K \in \mathcal{K}} \lim_{i \in \mathbb I} \frac{m\big( \partial^K A_i \big)}{m(A_i)} = 0.
\]

\medskip

It is obvious from the boundary comparison lemma~\ref{lem:vHsF} that the notions of strong F{\o}lner net and van Hove net coincide, a fact that has already been remarked in \cite[Rem.~2.2(ii)]{H21}. In the remainder, we will mostly work with strong F{\o}lner boundaries.
It is easy to see that every strong F{\o}lner net is a  F{\o}lner net, since  $\delta^K A \subseteq \partial_{K^{-1} \cup \{e\}} A$ for all $K \in \mathcal{K}$ and each $A \in \mathcal{K}_p$.  An example of a  F{\o}lner sequence $(A_n)_{n\in \mathbb N}$ in $G=\mathbb R$  that is not a strong F{\o}lner sequence can be provided by compact nowhere dense sets $A_n\subset[0,n]$ of Lebesgue measure $n-1/n$, see \cite{K14}. An example in $G=\mathbb R^d$ has been given in \cite[Appendix, Ex.~3.4]{T92}. The next proposition characterizes strong F{\o}lner nets. Whereas parts of it are known, we give a proof for the convenience of the reader.

\medskip

\begin{prop}[Characterization of strong F{\o}lner nets]\label{lem:refine2} Let $G$ be an amenable locally compact group with left Haar measure $m$.
Let $(A_i)_{i \in \mathbb I} \in \mathcal{K}_p^{\mathbb I}$.
Then the following are equivalent.
\begin{itemize}
\item[(i)] $(A_i)_{i\in \mathbb I}$ is a strong F{\o}lner net.
\item[(ii)] $(A_i)_{i \in \mathbb I}$ is a F{\o}lner net and there is an open unit neighborhood $O$ such that \\
\quad $\lim_{i \in \mathbb I} \frac{m\big( \bigcap_{o \in O} oA_i \big)}{m(A_i)} = 1.$
\item[(iii)] $(A_i)_{i \in \mathbb I}$ is a F{\o}lner net and  for every compact symmetric unit neighborhood $K$ \\
\quad $\lim_{i \in \mathbb I} \frac{m\big( \bigcap_{k \in K} kA_i \big)}{m(A_i)} = 1.$

\item[(iv)]  For all $K,L\in\mathcal K$ we have  $\lim_{i\in \mathbb I} \frac{m(L\delta^KA_i)}{m(A_i)} =0$.

\item[(v)]  For all $K,L\in\mathcal K$ we have  $\lim_{i\in \mathbb I} \frac{m(L\partial^KA_i)}{m(A_i)} =0$.

\item[(vi)]  For all $K,L\in\mathcal K$ we have  $\lim_{i\in \mathbb I} \frac{m(L\partial_KA_i)}{m(A_i)}   =0$.

\end{itemize}
In fact the same equivalences (iv), (v) and~(vi) hold if ``$K, L\in \mathcal K$'' is replaced by ``$K, L\in \mathcal K_{s0}$'' in any of the above  statements.
In particular, $(A_i)_{i \in \mathbb{I}}$ is a van Hove net (sequence) if and only if it is a strong F{\o}lner net (sequence).
If $G$ is additionally assumed to be discrete, then every F{\o}lner net (sequence) is also a strong F{\o}lner net (sequence).
\end{prop}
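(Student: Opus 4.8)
The plan is to split the six conditions into two groups---the ``net plus boundary'' conditions (i), (iv), (v), (vi) and the ``core'' conditions (ii), (iii)---and to bridge them by the elementary decomposition of $\partial_K$ into an inner and an outer part. First I would observe that (i) is equivalent to the statement that $\lim_{i}m(\partial_K A_i)/m(A_i)=0$ for \emph{every} $K\in\cK$, since a supremum of nonnegative quantities vanishes iff each of them does. With this reading, the equivalence of (i), (iv), (v), (vi) is purely geometric: the boundary comparison Lemma~\ref{lem:vHsF} gives $\delta^KA\subseteq\partial_KA\subseteq K\delta^KA$ and $\partial_KA\subseteq\partial^KA\subseteq\partial_{K^2}A$, so that $m(L\delta^KA_i)$, $m(L\partial_KA_i)$ and $m(L\partial^KA_i)$ control one another after harmless enlargements of $L$ and $K$, yielding (iv)$\Leftrightarrow$(v)$\Leftrightarrow$(vi). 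The passage between the thickened form (vi) and the plain form (i) uses the relation $L\partial_KA\subseteq\partial_{KL^{-1}}A$: taking $L=\{e\}$ gives (vi)$\Rightarrow$(i), while $m(L\partial_KA_i)\le m(\partial_{KL^{-1}}A_i)$ with $KL^{-1}\in\cK$ gives (i)$\Rightarrow$(vi). The reduction of (iv)--(vi) to $K,L\in\cK_{s0}$ then follows from monotonicity of $\partial_K,\partial^K$ in $K$ (and of $L\,\cdot\,$ in $L$) together with the fact that every compact set is contained in some member of $\cK_{s0}$, namely $K\cup K^{-1}\cup V_0$ for a fixed compact symmetric unit neighborhood $V_0$.

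Next I would connect these to the core conditions through the identity, valid for symmetric $K$ with $e\in K$,
\[
\partial_K A=(A^c\cap KA)\,\cup\,(A\cap KA^c)=\delta^KA\,\cup\,\Big(A\setminus\textstyle\bigcap_{k\in K}kA\Big),
\]
a disjoint union of an outer and an inner part: the outer part is the Følner boundary $\delta^KA$, and the inner part is the core deficiency. From this, (i)$\Leftrightarrow$(iii) is immediate. If $(A_i)$ is strong Følner then $m(\delta^KA_i)/m(A_i)\to0$ (so it is Følner) and $m(A_i\setminus\bigcap_{k\in K}kA_i)\le m(\partial_KA_i)=o(m(A_i))$; conversely, for a Følner net the outer part is controlled by the Følner property and the inner part by the all-$K$ core condition, so their sum $m(\partial_KA_i)$ is $o(m(A_i))$. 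The implication (iii)$\Rightarrow$(ii) is trivial: take $O=\intr K$ for any $K\in\cK_{s0}$, since shrinking the structuring set enlarges the core.

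The one substantial step, and the main obstacle, is (ii)$\Rightarrow$(iii): upgrading the core condition from a single open neighborhood $O$ to all compact symmetric $K$, using only that $(A_i)$ is Følner. Here I would argue by covering. After replacing $O$ by $O\cap O^{-1}$ I may assume $O$ is a symmetric open unit neighborhood with $m(D_O(A_i))/m(A_i)\to0$, where $D_O(A)=A\cap OA^c$. Fix $K\in\cK_{s0}$ and cover $K\subseteq\bigcup_{j=1}^{n}Oy_j$ with $y_j\in K$, possible since $\{Oy:y\in K\}$ is an open cover of the compact set $K$. Given $x\in D_K(A_i)=A_i\cap KA_i^c$, choose $k\in K$ with $kx\in A_i^c$ and write $k=oy_j$ with $o\in O$; putting $z=y_jx$ we get $oz\in A_i^c$. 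If $z\in A_i$ then $z\in A_i\cap OA_i^c=D_O(A_i)$, so $x\in y_j^{-1}D_O(A_i)$; if $z\in A_i^c$ then $x\in A_i$ and $y_jx\in A_i^c$. Hence
\[
D_K(A_i)\subseteq\Big(\bigcup_{j=1}^{n}y_j^{-1}D_O(A_i)\Big)\cup\bigcup_{j=1}^{n}\big(A_i\cap y_j^{-1}A_i^c\big).
\]
By left invariance of $m$ we have $m(y_j^{-1}D_O(A_i))=m(D_O(A_i))$, and likewise $m(A_i\cap y_j^{-1}A_i^c)=m(y_jA_i\cap A_i^c)\le m(KA_i\cap A_i^c)=m(\delta^KA_i)$ because $y_j\in K$. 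Therefore $m(D_K(A_i))\le n\,m(D_O(A_i))+n\,m(\delta^KA_i)$, and both terms are $o(m(A_i))$ by the core hypothesis and the Følner property. This gives the $K$-core condition for every $K\in\cK_{s0}$, i.e.\ (iii).

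Finally, the three supplementary assertions follow quickly. The equivalence of the $\cK$ and $\cK_{s0}$ versions of (iv)--(vi) was obtained in the first paragraph; that a van Hove net is the same as a strong Følner net is the case $L=\{e\}$ of (v)$\Leftrightarrow$(vi); and if $G$ is discrete I would take $O=\{e\}$, an open unit neighborhood with $\bigcap_{o\in O}oA_i=A_i$, so that the core condition in (ii) holds trivially and every Følner net is thus a strong Følner net. I note that throughout only left invariance of $m$ is used, consistent with $G$ not being assumed unimodular; the single genuinely non-formal ingredient is the covering argument in (ii)$\Rightarrow$(iii), which converts the single-neighborhood core hypothesis, via finitely many left translates, into a bound by the Følner boundary plus finitely many copies of the given core deficiency.
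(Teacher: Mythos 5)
Your proposal is correct and follows essentially the same route as the paper: the equivalences among (i), (iv), (v), (vi) via the boundary comparison lemma and the relation $L\partial_K A\subseteq\partial_{KL^{-1}}A$, and the substantial step (ii)$\Rightarrow$(iii) via a finite cover of $K$ by translates of $O$, splitting the core deficiency into finitely many copies of $D_O(A_i)$ plus F{\o}lner-boundary terms, exactly as in the paper's proof. The only notable (and welcome) refinement is your exact disjoint decomposition $\partial_KA=\delta^KA\,\dot\cup\,\bigl(A\setminus\bigcap_{k\in K}kA\bigr)$ for symmetric $K\ni e$, which yields (i)$\Leftrightarrow$(iii) in one stroke, whereas the paper closes the cycle (iii)$\Rightarrow$(i) through the slightly cruder inclusions $\partial_KA_i\subseteq K(KA_i\setminus A_i)\subseteq K^2A_i\setminus\bigcap_{k\in K}kA_i$.
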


\begin{remark}
The equivalence (i)$\Leftrightarrow$(ii) already appears in \cite[Appendix, (3.K)]{T92}. It shows that F{\o}lner nets are strong F{\o}lner nets if the they display some asymptotic invariance with respect to small topological pertubations. In this situation, one gets with (iii)  asymptotic invariance for the ``inner part'' $A_i \setminus \mathrm{Int}_K(A_i)$, where $\mathrm{Int}_K(A) = \{g \in A:\, Kg \subseteq A\}$ for a compact symmetric unit neighborhood $K$ and a set $A$.
	The observation that for discrete groups, the notions of a F{\o}lner net (or sequence) and a strong F{\o}lner net (or sequence) are equivalent has also been observed in \cite[Prop.~2.3]{Kr07} and \cite[Prop.~2.4]{CCK14}. Note also that the authors deal in \cite{CCK14} with semigroups and with boundary defined as the inner part of the strong F{\o}lner boundary, i.e. \@ $\widetilde{\partial}_K A := A \setminus \mathrm{Int}_K(A)$.
\end{remark}

\begin{proof}
	We have indicated above that every strong F{\o}lner net is a F{\o}lner net. In order to complete the proof of the implication (i)$\Rightarrow$(ii), let $O$ be an open relatively compact, symmetric unit neighborhood.
	Since $A_i \setminus \bigcap_{o \in O} oA_i \subseteq \partial_O(A_i)$, and since $(A_i)$ is a strong F{\o}lner net by (i), we obtain
	\[
	\liminf_{i \in \mathbb I} \frac{m\big( \bigcap_{o \in O} oA_i \big)}{m(A_i)} \geq \Bigg(  1 - \limsup_{i \in \mathbb I}  \frac{m\big( \partial_O A_i \big)}{m(A_i)}\Bigg) = 1.
	\]
	We turn to the proof of (ii)$\Rightarrow$(iii). Let $K$ be any compact symmetric unit neighborhood.
	We find a finite cover of $K$ by left-translates $s_j O$. Then
	\begin{align*}
	\limsup_{i \in \mathbb I} \frac{m\big( A_i \setminus \bigcap_{k \in k} kA_i \big)}{m(A_i)}
	&\leq \limsup_{i \in \mathbb I} \frac{m\big( A_i \setminus \bigcap_{j} \bigcap_{o \in O} s_j oA_i \big)}{m(A_i)} \\
	&\leq \limsup_{i \in \mathbb I} \sum_j \frac{m\big( s_j^{-1} A_i \setminus \bigcap_{o \in O} oA_i \big)}{m(A_i)}  \\
	&\leq \sum_j \Bigg( \limsup_{i \in \mathbb I} \frac{m\big( s_j^{-1}A_i \,\triangle \, A_i \big)}{m(A_i)} + \limsup_{i \in \mathbb I} \frac{m\big( A_i \setminus \bigcap_{o \in O} o A_i \big)}{m(A_i)}  \Bigg) \\
	&= 0.
	\end{align*}
	Note that we used for the last equality that $(A_i)$ is a F{\o}lner net and the assumption on $O$. This shows the assertion~(iii). We turn to the proof of the implication (iii)$\Rightarrow$(i). By Lemma~\ref{lem:vHsF} and the fact that $(A_i)$ is a F{\o}lner net,
	\begin{align*}
	\limsup_{i \in \mathbb I} \frac{m\big( \partial_K A_i \big)}{m(A_i)} &\leq \limsup_{i \in \mathbb I} \frac{m\big( K(KA_i \setminus A_i) )}{m(A_i)} \leq \limsup_{i \in \mathbb I} \frac{m\big( K^2 A_i \setminus \bigcap_{k \in K} kA_i \big)}{m(A_i)} \\
	&\leq \limsup_{i \in \mathbb{I}} \frac{m(K^2 A_i \setminus A_i)}{m(A_i)} + \limsup_{i \in \mathbb{I}} \frac{m( A_i \setminus \bigcap_{k \in K} kA_i)}{m(A_i)} \\
	&=0.
	\end{align*}
	Due to the monotonicity of the strong F{\o}lner boundary in $K$, the same is true for general $K \in \mathcal{K}$. Hence $(A_i)$ is a strong F{\o}lner net. The equivalence of (i) and (vi) is due to the relation $L\partial_K A \subseteq \partial_{KL^{-1}}A$ for general compact sets $K,L,A$. If restricting oneself to compact symmetric unit neighborhoods $K$ and $L$, the equivalences (iv)$\Leftrightarrow$(v)$\Leftrightarrow$(vi) follow from the boundary comparison lemma, Lemma~\ref{lem:vHsF}. Due to monotonicity, these assertions are equivalent when formulated for general $K,L \in \mathcal{K}$. Assuming in addition that $G$ is discrete, then the implication (ii) $\Rightarrow$ (i) shows that every F{\o}lner net is a strong F{\o}lner net.
\end{proof}

It is well known  that existence of a  F{\o}lner sequence implies existence of a strong F{\o}lner sequence, cf.\@ \cite[Lem.~2.6]{PS16} or \cite[Lem.~2.2]{K14}. Here we describe how a strong F{\o}lner net can be constructed from any  F{\o}lner net by a simple ``thickening procedure''.

\begin{prop}[Construction of strong F{\o}lner nets]
	Let $G$ be an amenable locally compact group with left Haar measure $m$. If $(A_i)$ is a F{\o}lner net and $L$ is a compact symmetric unit neighborhood, then the net $(A_i^{\prime})$ with $A_i^{\prime} := LA_i$ is a strong F{\o}lner net.
\end{prop}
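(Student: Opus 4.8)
The plan is to verify condition (ii) of Proposition~\ref{lem:refine2} for the net $(A_i')_{i\in\mathbb I}$ with $A_i':=LA_i$, rather than attempting a direct estimate of $m(\partial_K(LA_i))$. A naive bound is available from Lemma~\ref{lem:vHsF}, namely $\partial_K(LA_i)\subseteq\partial_{M^2}A_i$ with $M:=K\cup L$ (using $(LA_i)^c\subseteq A_i^c$), but this only reduces the claim to $(A_i)$ being a strong F{\o}lner net, which need not hold. Instead, since $L$ is a compact symmetric unit neighborhood, I set $O:=\intr(L)$, an open unit neighborhood with $O\subseteq L$. Note first that $A_i'=LA_i\in\mathcal K_p$, because $A_i\subseteq LA_i$ forces $m(LA_i)\ge m(A_i)>0$.

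First I would show that $(LA_i)$ is itself a F{\o}lner net. For any $K\in\mathcal K$, since $e\in K$ we have $\delta^K(LA_i)=KLA_i\setminus LA_i$, and because $A_i\subseteq LA_i$ this is contained in $KLA_i\setminus A_i=\delta^{KL}A_i$, where $KL\in\mathcal K$. Dividing by $m(LA_i)\ge m(A_i)$ gives $m(\delta^K(LA_i))/m(LA_i)\le m(\delta^{KL}A_i)/m(A_i)$, which tends to $0$ since $(A_i)$ is a F{\o}lner net. As $K\in\mathcal K$ was arbitrary, $(LA_i)$ is a F{\o}lner net.

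Next I would establish the topological invariance required in Proposition~\ref{lem:refine2}(ii). The key inclusion is $A_i\subseteq\bigcap_{o\in O}o(LA_i)$: for $a\in A_i$ and $o\in O\subseteq L=L^{-1}$ one has $o^{-1}\in L$, hence $o^{-1}a\in LA_i$, i.e.\ $a\in o(LA_i)$. Since $e\in O$ also yields $\bigcap_{o\in O}o(LA_i)\subseteq LA_i$, we obtain $A_i\subseteq\bigcap_{o\in O}o(LA_i)\subseteq LA_i$, and therefore $LA_i\setminus\bigcap_{o\in O}o(LA_i)\subseteq LA_i\setminus A_i=\delta^L A_i$. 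Consequently
\[
1\ \ge\ \frac{m\bigl(\bigcap_{o\in O}o(LA_i)\bigr)}{m(LA_i)}\ \ge\ 1-\frac{m(\delta^L A_i)}{m(LA_i)}\ \ge\ 1-\frac{m(\delta^L A_i)}{m(A_i)}\ \longrightarrow\ 1,
\]
again because $(A_i)$ is a F{\o}lner net. Thus both requirements of Proposition~\ref{lem:refine2}(ii) are met for the net $(LA_i)$, and that proposition gives that $(LA_i)$ is a strong F{\o}lner net.

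The main obstacle here is conceptual rather than computational: one must resist trying to dominate $\partial_K(LA_i)$ directly by a F{\o}lner boundary of $A_i$, since for large $K$ this would amount to controlling the inner $K$-boundary of $A_i$, which a general F{\o}lner net does not provide (cf.\ the fat Cantor set example for $G=\mathbb R$). The correct viewpoint is that thickening by $L$ manufactures exactly the small-scale $\intr(L)$-invariance demanded in Proposition~\ref{lem:refine2}(ii), whose proof then bootstraps this single invariance, together with the F{\o}lner property, up to invariance at all scales $K\in\mathcal K$. All the set inclusions used above are elementary.
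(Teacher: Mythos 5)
Your argument is correct and is essentially the paper's proof: both establish the F{\o}lner property of $(LA_i)$ by absorbing $\delta^K(LA_i)$ into F{\o}lner boundaries of $A_i$, and both derive condition (ii) of Proposition~\ref{lem:refine2} from the inclusion $A_i\subseteq\bigcap_{o\in O}o(LA_i)$ for an open unit neighborhood $O\subseteq L$, with the conclusion then following from the equivalence (i)$\Leftrightarrow$(ii). One small wording fix: for arbitrary $K\in\mathcal K$ the identity $\delta^K(LA_i)=KLA_i\setminus LA_i$ requires $e\in K$, so you should say that one may assume $e\in K$ without loss of generality (the standard reduction used in the last paragraph of the proof of Proposition~\ref{prop:refine}), since otherwise the piece $LA_i\setminus KLA_i$ of $\delta^K(LA_i)$ is not covered by your inclusion.
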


\begin{proof}
	We first note that $(A_i^{\prime})$ is a F{\o}lner net which follows from the  F{\o}lner property of $(A_i)$ and the inclusion
	\[
	A_i^{\prime} \, \triangle \, KA_i^{\prime} \subseteq \big( LA_i \,\triangle \, A_i\big) \cup \big( A_i \setminus KA_i \big) \cup \big( KLA_i \setminus A_i \big)
	\]
	for arbitrary $K \in \mathcal{K}$.
	Further, take an open symmetric unit neighborhood $O \subseteq L$. We claim that $A_i \subseteq \bigcap_{o \in O} oA_i^{\prime}$. Indeed, for any $a \in A_i$ and each $o \in O$, we have $a = oo^{-1}a \subseteq oA_i^{\prime}$. Using that $(A_i)$ is a F{\o}lner net, we obtain
	\[
	\liminf_{i \in \mathbb I} \frac{m\big( \bigcap_{o \in O} oA_i^{\prime} \big)}{m(A_i^{\prime})} \geq \liminf_{i \in \mathbb{I}} \frac{m(A_i)}{m(LA_i)} = 1.
	\]
	Now the claim follows from the previous proposition, equivalence (i)$\Leftrightarrow$(ii).
\end{proof}

The following lemma shows that (left)-asymptotic invariance observed via strong F{\o}lner nets carries over to upper translation bounded measures.
For $\sigma$-compact LCA groups, weaker versions of it are well known, see e.g.~\cite[Prop.~9.1]{LR07} and \cite[Lem.~2.2, Prop.~5.1]{mr13}.

\smallskip

\begin{lemma}\label{lem:assm}
Let $G$ be a unimodular amenable locally compact group with Haar measure $m$.  Let $(\mathbb I, \prec)$ be a directed poset and let $(A_i)_{i\in\mathbb I}$ be a strong F{\o}lner net.  Let $\nu$ be a measure on $G$ that is upper translation bounded. We then have
\begin{displaymath}
\sup_{K,L\in \mathcal K}\lim_{i\in \mathbb I} \sup_{s\in G} \frac{\nu(L \delta^K A_is)}{m(A_i)} =
\sup_{K,L\in \mathcal K}\lim_{i\in \mathbb I} \sup_{s\in G}\frac{\nu(L \partial_K A_is)}{m(A_i)} =
\sup_{K,L\in \mathcal K}\lim_{i\in \mathbb I} \sup_{s\in G}\frac{\nu(L \partial^K A_is)}{m(A_i)} = 0   \ .
\end{displaymath}
\end{lemma}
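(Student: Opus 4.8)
The plan is to fix $K,L\in\mathcal K$ and to prove that each of the three inner expressions $\lim_{i}\sup_{s\in G}\nu(L\,\partial A_i s)/m(A_i)$ vanishes, where $\partial A_i$ denotes any one of $\delta^K A_i$, $\partial_K A_i$, $\partial^K A_i$. Since all three ratios are nonnegative, once the limit is $0$ for every fixed pair $(K,L)$, the outer supremum $\sup_{K,L\in\mathcal K}$ is also $0$, so all three displayed quantities equal $0$ and are in particular equal to one another. The two ingredients are upper translation boundedness, which lets me replace $\nu$ by Haar measure, and the characterization of strong F{\o}lner nets in Proposition~\ref{lem:refine2}, which makes the resulting boundary ratios tend to $0$.

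First I would record the translation estimate. By upper translation boundedness and Lemma~\ref{lem:tbb} there are a compact symmetric unit neighborhood $B_u$ and a finite constant $c=C_u/m(B_u)$ with $\nu(E)\le c\,m(B_u E)$ for every relatively compact Borel set $E$. Applying this with $E=L\,\partial A_i s$ gives $\nu(L\,\partial A_i s)\le c\,m(B_u L\,\partial A_i s)$. Here unimodularity enters crucially: right-invariance of $m$ yields $m(B_u L\,\partial A_i s)=m(B_u L\,\partial A_i)$, so the bound is \emph{independent of} $s$. Writing $L':=B_u L\in\mathcal K$, I obtain, uniformly in $s\in G$, the inequality $\nu(L\,\partial A_i s)/m(A_i)\le c\,m(L'\,\partial A_i)/m(A_i)$, and hence $\sup_{s\in G}\nu(L\,\partial A_i s)/m(A_i)\le c\,m(L'\,\partial A_i)/m(A_i)$.

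It then remains to let $i$ run. Since $(A_i)_{i\in\mathbb I}$ is a strong F{\o}lner net, Proposition~\ref{lem:refine2} applies: clause (iv) gives $\lim_i m(L'\delta^K A_i)/m(A_i)=0$, clause (vi) gives $\lim_i m(L'\partial_K A_i)/m(A_i)=0$, and clause (v) gives $\lim_i m(L'\partial^K A_i)/m(A_i)=0$, for all $K,L'\in\mathcal K$. Combined with the uniform bound of the previous paragraph, each of the three limits $\lim_i\sup_{s\in G}\nu(L\,\partial A_i s)/m(A_i)$ is squeezed to $0$; taking the supremum over $K,L\in\mathcal K$ then completes the argument for all three expressions simultaneously.

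The one point requiring care, and the only genuine obstacle, is that the boundary sets need not be compact: for instance $\partial_K A_i=K^{-1}A_i\cap K^{-1}A_i^c$ is the intersection of a closed set with an open set, so Lemma~\ref{lem:tbb} does not apply literally. I would resolve this by observing that each boundary is \emph{relatively} compact, being contained in $KA_i\cup A_i$ or in $K^{-1}A_i$, whence $L\,\partial A_i s$ lies in a compact set; the maximal-packing/covering argument underlying Lemma~\ref{lem:nOW} and Lemma~\ref{lem:tbb} goes through verbatim for relatively compact Borel sets (equivalently, one may replace each boundary by its closure before applying Lemma~\ref{lem:tbb}). Beyond this routine check, no deeper difficulty arises and the rest is bookkeeping.
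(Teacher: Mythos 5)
Your proof is correct and follows essentially the same route as the paper: apply Lemma~\ref{lem:tbb} to bound $\nu(L\,\partial A_i s)$ by a constant times $m(B_uL\,\partial A_i s)$, use unimodularity to drop the dependence on $s$, and invoke Proposition~\ref{lem:refine2}(iv)--(vi) to send the resulting ratios to $0$. Your extra remark about the boundaries being only relatively compact (so that Lemma~\ref{lem:tbb} applies after passing to closures or to the relatively compact Borel setting) is a careful touch the paper's one-line proof glosses over, but it does not change the argument.
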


\begin{proof}
We use Lemma~\ref{lem:tbb} to transfer the properties in Lemma~ \ref{lem:refine2} to the measure $\nu$. For instance, for the strong F{\o}lner boundary this yields
\begin{displaymath}
\lim_{i\in \mathbb I} \sup_{s\in G}\frac{\nu(L \partial_K A_is)}{m(A_i)}
\le \lim_{i\in \mathbb I}  \frac{C_u}{m(B_u)}\frac{m(B_u L \partial_K A_i)}{m(A_i)}  =0 \ ,
\end{displaymath}
where we used right invariance of the Haar measure in the estimate.
\end{proof}

\subsection{Densities on F{\o}lner nets in unimodular groups}

In this section, we restrict ourselves to amenable locally compact groups that are unimodular.  We define densities of a measure via averaging with respect to a F{\o}lner net.

\begin{defi}[$\cA$-densities] \label{def:adensities}
Let $G$ be a unimodular amenable locally compact group with left Haar measure $m$.
Let $\nu$ be a measure on $G$, and let $\cA=(A_i)_{i\in\mathbb I}$ be a strong F{\o}lner net in $G$.  We define the upper and lower densities of $\nu$ with respect to $\cA$ by
\begin{displaymath}
\begin{split}
D^-_\cA= \liminf_{i\in \mathbb I} \frac{1}{m(A_i)}  \nu( A_i) \ , &\qquad
D^+_\cA= \limsup_{i\in\mathbb I} \frac{1}{m(A_i)} \nu( A_i) \\
B^-_\cA= \liminf_{i\in \mathbb I} \frac{1}{m(A_i)}  \inf_{s\in G} \nu( A_is) \ ,
&\qquad B^+_\cA= \limsup_{i\in\mathbb I} \frac{1}{m(A_i)} \sup_{s\in G}\nu( A_is)  \ .
\end{split}
\end{displaymath}
If $D^-_\cA=D^+_\cA=D_\cA$, we say that $\nu$ has $\cA$-density $D_\cA$.
\end{defi}

\begin{remark}
We have $0\le B^-_\cA\le D^-_\cA\le D^+_\cA\le B^+_\cA\le\infty$ by definition. If $\nu$ is upper translation bounded, we have $B^+_\cA<\infty$.  If $\nu$ is lower translation bounded, we have $B^-_\cA>0$.
\end{remark}

\begin{prop}\label{def:bdensities}
Assume that $G$ is a unimodular amenable locally compact group with left Haar measure $m$.
Let $\nu$ be an upper translation bounded measure on $G$. Take any strong F{\o}lner net  $\cA=(A_i)_{i\in\mathbb I}$ in $G$. We then have
\begin{displaymath}
B^-_\cA= \lim_{i\in \mathbb I} \frac{1}{m(A_i)}  \inf_{s\in G} \nu( A_is) \ ,
\qquad
B^+_\cA= \lim_{i\in\mathbb I} \frac{1}{m(A_i)} \sup_{s\in G}\nu( A_is) \ .
\end{displaymath}
Moreover we have
\begin{displaymath}
0 \le L^-=B^-=B^-_\cA \le D^-_\cA\le D^+_\cA\le  B^+_\cA=B^+=L^+  <\infty \ ,
\end{displaymath}
where $L^-,L^+$ are the Leptin densities from Definition~\ref{def:lep}, and where $B^-, B^+$ are the Beurling densities from Definition \ref{prop:sfs}.
In particular, the above $\mathcal A$-densities do not depend on the choice of the strong F{\o}lner sequence.
\end{prop}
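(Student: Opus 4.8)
The plan is to assemble the chain from ingredients that are mostly already in place, isolating the one genuinely new computation. The two outer equalities $L^-=B^-$ and $L^+=B^+$ are precisely the amenable case of Theorem~\ref{BduD1}. Finiteness $B^+<\infty$ is Remark~\ref{rem:Bfin} (equivalently Lemma~\ref{lem:tbb}), and combined with $B^-=L^-\le L^+=B^+$ it also shows $B^-<\infty$. The inner inequalities $B^-_\cA\le D^-_\cA\le D^+_\cA\le B^+_\cA$ follow from the definitions by taking $s=e$, as recorded in the remark following Definition~\ref{def:adensities}. Hence everything reduces to proving the two identities $B^-=B^-_\cA$ and $B^+=B^+_\cA$ and to upgrading the $\liminf$/$\limsup$ in the definition of $B^\pm_\cA$ to genuine limits.

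In each identity one inequality is free. Every $A_i$ lies in $\cK_p$, so it is an admissible test set in the extrema defining $B^\pm$, giving $\inf_{s\in G}\nu(A_is)/m(A_i)\le B^-$ and $\sup_{s\in G}\nu(A_is)/m(A_i)\ge B^+$ for each $i$. Passing to $\limsup_i$ respectively $\liminf_i$ yields $B^-_\cA\le B^-$ and $B^+\le B^+_\cA$.

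The substance lies in the reverse inequalities, for which I would run an averaging argument based on the identity in Lemma~\ref{lem:simpobs}. For the lower density, fix $\varepsilon>0$, choose $A\in\cK_p$ with $\nu(As)\ge(B^--\varepsilon)m(A)$ for all $s\in G$, and put $\widetilde A=A\cup A^{-1}\cup\{e\}$. For fixed $i$ and $s$, applying Lemma~\ref{lem:simpobs} to the compact sets $A$ and $A_is$ and using right invariance gives
\[
\int_A\nu(aA_is)\,\mathrm{d}m(a)=\int_{A_is}\nu(Ab)\,\mathrm{d}m(b)\ge(B^--\varepsilon)\,m(A)\,m(A_i).
\]
Since $a\in A\subseteq\widetilde A$ and $e\in\widetilde A$, one has $aA_i\setminus A_i\subseteq\delta^{\widetilde A}A_i$, whence $\nu(aA_is)\le\nu(A_is)+\nu((\delta^{\widetilde A}A_i)s)$; integrating this over $a\in A$, comparing with the bound above, and dividing by $m(A)$ gives, for every $s$,
\[
\frac{\nu(A_is)}{m(A_i)}\ge(B^--\varepsilon)-\frac{\sup_{t\in G}\nu((\delta^{\widetilde A}A_i)t)}{m(A_i)}.
\]
The subtracted term vanishes along $\cA$ by Lemma~\ref{lem:assm} (with $L=\{e\}$, $K=\widetilde A$), and since the right-hand side is independent of $s$ one may take $\inf_s$ and then $\liminf_i$ to obtain $B^-_\cA\ge B^--\varepsilon$, hence $B^-_\cA\ge B^-$. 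The upper density is symmetric: choose $A$ with $\nu(As)\le(B^++\varepsilon)m(A)$, use the inner-boundary inclusion $A_i\setminus aA_i\subseteq\partial_{\widetilde A}A_i$ to get $\nu(aA_is)\ge\nu(A_is)-\nu((\partial_{\widetilde A}A_i)s)$, and combine with Lemma~\ref{lem:simpobs} and Lemma~\ref{lem:assm} to deduce $\sup_s\nu(A_is)/m(A_i)\le(B^++\varepsilon)+o(1)$ and therefore $B^+_\cA\le B^+$.

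Together the two directions give $B^-=B^-_\cA$ and $B^+=B^+_\cA$. The free direction bounded $\limsup_i\inf_s\nu(A_is)/m(A_i)\le B^-$ while the substantive one bounded the corresponding $\liminf_i$ from below by $B^-$, so both coincide and the limit defining $B^-_\cA$ exists; the same reasoning handles $B^+_\cA$. Finally, as $B^\pm_\cA=B^\pm$ depend only on $\nu$, they are independent of the chosen strong F{\o}lner net. The main obstacle is exactly this reverse direction: one must control the right-translated boundary terms $\nu((\delta^{\widetilde A}A_i)s)$ and $\nu((\partial_{\widetilde A}A_i)s)$ uniformly in $s\in G$, which is what Lemma~\ref{lem:assm} delivers and why upper translation boundedness of $\nu$ enters.
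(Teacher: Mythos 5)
Your proof is correct, and its outer skeleton matches the paper's: the ``free'' inequalities come from testing the Beurling extrema on the sets $A_i$, the hard inequalities require controlling boundary terms uniformly in $s$ via Lemma~\ref{lem:assm}, and the limits exist because the $\liminf$ and $\limsup$ are sandwiched between the same two bounds. Where you genuinely diverge is in the source of the hard inequalities. The paper transfers the \emph{Leptin} bounds to the net: it picks the compact set $K$ from Definition~\ref{def:lep} with $d\,m(A)\le\nu(KA)$ for all $A$, applies this to $A=A_is$, uses $KA_is\subseteq A_is\cup(\delta^KA_i)s$, and then closes the sandwich by invoking $B^\pm=L^\pm$ from Theorem~\ref{BduD1}. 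You instead transfer the \emph{Beurling} bounds directly: you pick a near-optimal averaging set $A$ with $\nu(As)\ge(B^--\varepsilon)m(A)$ uniformly in $s$ and run the Fubini identity of Lemma~\ref{lem:simpobs} against $A_is$, which in effect inlines the averaging mechanism that the paper has already packaged into Lemma~\ref{lem:ineq} and Theorem~\ref{BduD1}. The trade-off is clear: the paper's route is shorter because it reuses Theorem~\ref{BduD1} for the identification with $B^\pm$, whereas your route establishes $B^\pm=B^\pm_\cA$ without passing through the Leptin densities at all, so that Theorem~\ref{BduD1} is needed only for the labels $L^\pm=B^\pm$ in the final chain. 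All the individual steps you use check out: the inclusions $aA_i\setminus A_i\subseteq\delta^{\widetilde A}A_i$ and $A_i\setminus aA_i\subseteq\partial_{\widetilde A}A_i$ are valid for $a\in A\subseteq\widetilde A$ with $\widetilde A$ symmetric containing $e$, the choice of $A$ is legitimate because $B^-\le B^+<\infty$ in the amenable upper-translation-bounded setting, and Lemma~\ref{lem:assm} with $L=\{e\}$ kills both boundary terms uniformly in $s$.
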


\begin{remark}
In the following proof, note that upper translation boundedness and the strong version of the F{\o}lner property only enter in showing that $B^-_\cA=B_-$. Assuming only local finiteness of $\nu$ we still have $B^-_\cA \le B^-=L^-\le L^+=  B^+ \le B^+_\cA$.
\end{remark}

\begin{proof}
Consider
\begin{displaymath}
C^-_\cA= \limsup_{i\in \mathbb I} \frac{1}{m(A_i)}  \inf_{s\in G} \nu( A_is) \ ,
\qquad
C^+_\cA= \liminf_{i\in\mathbb I} \frac{1}{m(A_i)} \sup_{s\in G}\nu( A_is) \ .
\end{displaymath}
(i) Note that for each $i \in \mathbb I$ we have
$$
\frac{1}{m(A_i)}  \inf_{s\in G} \nu( A_is)\leq \sup_{A \in \cK_p} \frac{1}{m(A)}  \inf_{s\in G} \nu( As) = B^- \ ,
$$
which gives $C^-_\cA\le B^-$. If $\Lep^-\le C^-_\cA$, then $C^-_\cA= B^-=L^-$ by Theorem~\ref{BduD1}. To show  $\Lep^-\le C^-_\cA$, we take arbitrary $d<\Lep^-$. Take $K \in \mathcal K$ such that for all $A\in\mathcal K_p$ we have $d\le \nu(KA)/m(A)$.
Choosing $A=A_is$ for $i\in\mathbb I$ we obtain the estimate
\begin{displaymath}
d\le \frac{\nu(KA_is)}{m(A_i)} \le
\frac{\nu(A_is)}{m(A_i)}+
\frac{\nu((\delta^{K}A_i)s)}{m(A_i)} \, ,
\end{displaymath}
where we used the estimate $KAs\subseteq As \cup (\delta^{K}A)s$, and the unimodularity of $G$. Now upper translation boundedness of $\nu$, right invariance of the Haar measure and the strong F{\o}lner property of $\cA$ yield $d\le C^-_\cA$, cf.\@ Lemma~\ref{lem:assm}. As $d<\Lep^-$ was arbitrary, we infer $\Lep^-\le C^-_\cA$. Therefore, we have $C^-_\cA= B^-=L^-$.
\smallskip

\noindent (ii)
Since for all $i\in \mathbb I$ we have
$$
 \frac{1}{m(A_i)} \sup_{s\in G}\nu( A_is) \geq \inf_{A \in \cK_p}  \frac{1}{m(A)} \sup_{s\in G}\nu( As) = B^+ \,,
$$
we get $B^+\le C^+_\cA$. If $C^+_\cA\le \Lep^+$, then $C^+_\cA= B^+=L^+$ by Theorem~\ref{BduD1}, which uses local finiteness of $\nu$. Let us thus show $\Lep^+\ge C^+_\cA$. Assume without loss of generality $\Lep^+<\infty$. Assume $d>\Lep^+$ for some finite $d$. Take $K \in \mathcal K$ such that for all $A\in \mathcal K_p$ we have $d\ge \nu(A)/m(KA)$. Choosing $A=A_is$ for  $i\in\mathbb I$ we obtain with the unimodularity of $G$ the estimate
\begin{displaymath}
d\ge \frac{\nu( A_is)}{m(KA_i)} \ .
\end{displaymath}
By the F{\o}lner property of $\cA$  this implies $d\ge C^+_\cA$. As $d>\Lep^+$ was arbitrary, we infer $\Lep^+\ge C^+_\cA$. Therefore, we have $C^+_\cA= B^+=L^+$.

\smallskip

\noindent (iii) By (i) and (ii), the values $C^-_\cA$ and $C^+_\cA$ are independent of the choice of the strong F{\o}lner net. This means that $C^-_\cA$ and $C^+_\cA$ are limits. In particular this implies $C^-_\cA=B^-_\cA \le D^-_\cA \le D^+_\cA \le B^+_\cA =C^+_\cA$. Also note $B^+<\infty$ by upper translation boundedness of $\nu$, see Remark~\ref{rem:Bfin}.
\end{proof}

\section{Leptin densities for lattice point counting}\label{sec:sud}

\noindent
We say that $\Gamma\subseteq G$ is a uniform lattice in a locally compact group $G$ if $\Gamma$ is a countable discrete cocompact subgroup of $G$.
Recall that existence of a uniform lattice in $G$ implies that $G$ is unimodular, see e.g.~\cite[Thm.~9.1.6]{DE}. A lattice is both left-uniformly discrete and right-uniformly discrete.
Analogously, a uniform lattice $\Gamma$ in $G$ is both left-relatively dense and right-relatively dense. Hence it admits a measurable, relatively compact left-fundamental domain and also a measurable, relatively compact right-fundamental domain. For a proof, see Theorem 1 and Remark 6 in \cite{fg68}.
 Since we are only interested in the covolume of a fundamental domain we can make this choice freely and fix a  relatively compact measurable left-fundamental domain $F$, i.e., \@ we have $\Gamma F = G$ and $F^{-1}\gamma_1\cap F^{-1}\gamma_2\neq \varnothing$ for some $\gamma_1,\gamma_2\in \Gamma$ implies $\gamma_1=\gamma_2$.
 Moreover, for any $\gamma \in \Gamma$, we observe that 
 \begin{align} \label{eqn:FD}
 \operatorname{card}(\Gamma \cap F^{-1}\gamma) = \operatorname{card}(\Gamma \gamma^{-1} \cap F^{-1}) = \operatorname{card}(\Gamma \cap F) = 1.
 \end{align}
 This fact will be used below in the proofs of the density formulas.
 	
\smallskip

\noindent
When identified with the Delone measure $\delta_\Gamma = \sum_{\gamma \in \Gamma} \delta_{\gamma}$,
uniform lattices $\Gamma$  have a positive finite Leptin density, which equals the reciprocal of its covolume $\mathrm{covol}(\Gamma)$. Recall that the covolume is defined as the Haar measure of any measurable relatively compact (left or right) fundamental domain.

\begin{prop}[Density of a uniform lattice]\label{prop:sudl}
Let $\Gamma$ be a uniform lattice in a unimodular amenable locally compact group $G$. Then $\Gamma$ has positive finite Leptin density
\begin{displaymath}
\Lep_\Gamma := \Lep_{\delta_\Gamma} =\frac{1}{m(F)} = \frac{1}{\mathrm{covol}(\Gamma)}\ ,
\end{displaymath}
where $F$ is any measurable relatively compact (left or right) fundamental domain for $\Gamma$.
\end{prop}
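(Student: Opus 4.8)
The goal is to show that the uniform lattice $\Gamma$, identified with the point measure $\delta_\Gamma$, has Leptin density equal to $1/m(F)$, where $F$ is a relatively compact measurable fundamental domain. The natural strategy is to pass through the Beurling density and the $\cA$-density machinery established in Section 4, since Theorem~\ref{BduD1} and Proposition~\ref{def:bdensities} already guarantee that for an upper translation bounded measure on a unimodular amenable group all these densities coincide. First I would verify that $\delta_\Gamma$ is a Delone measure, in particular upper translation bounded: since $\Gamma$ is uniformly discrete there is a symmetric unit neighborhood $B$ with $\setsize{\Gamma \cap Bx} \le \text{const}$ uniformly in $x$, so $\Lep^+_{\delta_\Gamma}<\infty$ by Lemma~\ref{lem:tbD}, and relative denseness gives lower translation boundedness. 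Having this, it suffices to compute the $\cA$-density $D_\cA$ along some conveniently chosen strong F{\o}lner net, because Proposition~\ref{def:bdensities} forces $\Lep_{\delta_\Gamma}=D_\cA$ as soon as the limit exists and is the same for all strong F{\o}lner nets.

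The heart of the argument is the tiling identity. Since $F$ is a fundamental domain we have the disjoint decomposition $G=\bigsqcup_{\gamma\in\Gamma}F^{-1}\gamma$ (using the right-fundamental-domain convention built into the definition above), and by \eqref{eqn:FD} each tile $F^{-1}\gamma$ contains exactly one lattice point. The plan is to approximate a given strong F{\o}lner set $A_i$ by a union of whole tiles: let $\Gamma_i=\{\gamma\in\Gamma: F^{-1}\gamma\cap A_i\ne\varnothing\}$ and compare $\delta_\Gamma(A_i)=\setsize{\Gamma\cap A_i}$ with $\setsize{\Gamma_i}$. On the one hand $\setsize{\Gamma_i}\cdot m(F)=m(\bigcup_{\gamma\in\Gamma_i}F^{-1}\gamma)$ by left-invariance of $m$ and disjointness of the tiles, and this union sandwiches $A_i$ between $A_i$ itself and a controlled thickening $F^{-1}A_i$ (tiles meeting $A_i$ lie inside $F^{-1}A_i$). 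On the other hand $\setsize{\Gamma\cap A_i}$ differs from $\setsize{\Gamma_i}$ only by lattice points whose tiles straddle the boundary of $A_i$, i.e.\ by points counted in a van Hove / strong F{\o}lner boundary layer $\partial^{F}A_i$ (up to enlarging $F$ to a symmetric compact neighborhood). Dividing by $m(A_i)$ and sending $i\to\infty$, the boundary terms vanish because $(A_i)$ is a strong F{\o}lner net, and the thickening $F^{-1}A_i$ has asymptotically the same measure as $A_i$. This yields
\begin{displaymath}
\lim_{i\in\mathbb I}\frac{\delta_\Gamma(A_i)}{m(A_i)}=\frac{1}{m(F)} \ .
\end{displaymath}

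The main obstacle, and the step requiring the most care, is controlling the boundary discrepancy uniformly: one must bound the number of lattice points in tiles that intersect both $A_i$ and $A_i^c$ by the Haar measure of an appropriate boundary set, and then invoke Lemma~\ref{lem:assm} (or directly the strong F{\o}lner property together with Lemma~\ref{lem:tbb}) to make this negligible after normalization. Here the uniform discreteness of $\Gamma$ enters to pass from counting boundary lattice points back to the Haar measure of a thickened boundary, and $F$ must be replaced by a compact symmetric set $K\supseteq F\cup F^{-1}$ containing the identity so that the relevant boundary is of the form $\partial^K A_i$ or $K\delta^K A_i$. Finally, the independence from the choice of left or right fundamental domain follows because covolume is well-defined; alternatively, one notes that by Lemma~\ref{sec:lepinv} and unimodularity the Leptin density is translation invariant, and by Proposition~\ref{def:bdensities} it is independent of the chosen strong F{\o}lner net, so the computed value $1/m(F)$ cannot depend on which fundamental domain is used.
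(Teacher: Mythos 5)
Your argument is essentially sound and rests on the same tiling identity as the paper's proof --- the decomposition of $G$ into the disjoint tiles $F^{-1}\gamma$, each containing exactly one lattice point by \eqref{eqn:FD}, together with the sandwich $A\subseteq\bigcup_{\gamma\in\Gamma_A}F^{-1}\gamma\subseteq KA$ for $K=\overline{F^{-1}F}$ --- but you route it through the F{\o}lner-net machinery, whereas the paper works directly with Definition~\ref{def:lep}. The paper's proof needs no limits and no boundary estimates: from the sandwich one reads off, for \emph{every} compact $A$, the two one-sided inequalities $m(A)\le m(F)\cdot\delta_\Gamma(KA)$ and $m(F)\cdot\delta_\Gamma(A)\le m(KA)$, which give $\Lep^-\ge 1/m(F)$ and $\Lep^+\le 1/m(F)$ straight from the sup--inf definition; amenability then closes the gap via Lemma~\ref{lem:Smp}, which gives $\Lep^-\le\Lep^+$. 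Your detour buys the stronger (and independently useful) statement that $\delta_\Gamma(A_is)/m(A_i)\to 1/m(F)$ along every strong F{\o}lner net, but it costs you Theorem~\ref{BduD1}, Proposition~\ref{def:bdensities} and Lemma~\ref{lem:assm} as prerequisites and obliges you to control boundary terms. One point in your reduction needs care: Proposition~\ref{def:bdensities} only sandwiches $D^\pm_\cA$ between $B^-_\cA=\Lep^-$ and $B^+_\cA=\Lep^+$, so computing $D_\cA$ for a single net merely yields $\Lep^-\le 1/m(F)\le\Lep^+$, which is the wrong direction. You must upgrade the computation to the translated sets $A_is$, uniformly in $s\in G$ --- which your tiling estimates do deliver, since $m(KA_is)=m(KA_i)$ by unimodularity and the boundary control in Lemma~\ref{lem:assm} is already uniform in $s$ --- so as to conclude $B^-_\cA=B^+_\cA=1/m(F)$ and hence $\Lep^-=\Lep^+=1/m(F)$.
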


\begin{proof}
Fix a measurable relatively compact left-fundamental domain $F$ of $\Gamma$.
Define the compact set $K=\overline{F^{-1}F}$. Fix an arbitrary compact set $A\subseteq G$ and consider
\begin{displaymath}
\Gamma_A=\{\gamma\in \Gamma: A\cap F^{-1}\gamma \ne\varnothing\}\ , \qquad A_F=\bigcup_{\gamma\in \Gamma_A} F^{-1}\gamma\ .
\end{displaymath}
Note that $\Gamma_A$ is a finite set and $A_F$ is a disjoint union. We have
\begin{equation}\label{eq:afd}
A\subseteq A_F\subseteq KA\ .
\end{equation}
For the second inclusion, assume that $x\in A_F$. Then $x\in F^{-1}\gamma$ for some $\gamma\in \Gamma_A$ and there exists some $a\in A$ such that $a\in F^{-1}\gamma$. Hence $x\in F^{-1}\gamma \subseteq F^{-1} Fa\subseteq F^{-1}FA\subseteq KA$.
With equality~\eqref{eqn:FD} we thus have
\begin{displaymath}
\begin{split}
m(A)&\le m(A_F)= m(F)\cdot \mcard(\Gamma_A)=m(F)\cdot \sum_{\gamma\in \Gamma_A} \mcard(\Gamma\cap F^{-1}\gamma)
=m(F)\cdot \mcard(\Gamma\cap A_F)\\
&\le m(F)\cdot \mcard(\Gamma\cap KA) = m(F) \cdot \delta_\Gamma\big( KA \big).
\end{split}
\end{displaymath}
By Definition~\ref{def:lep} this implies $\Lep^-\ge 1/m(F)$. We can similarly estimate
\begin{displaymath}
\begin{split}
m(KA)&\ge m(A_F)= m(F)\cdot \mcard(\Gamma_A)=m(F)\cdot \mcard(\Gamma\cap A_F)\ge m(F)\cdot \delta_\Gamma(A) \ .
\end{split}
\end{displaymath}
This leads to $\Lep^+\le 1/m(F)$ by Definition~\ref{def:lep}. Finally, since $G$ is amenable, we can use Lemma~\ref{lem:Smp} in order to conclude  $\Lep^-=\Lep^+$. This and the fact that the expression $1/m(F)$ does not depend on $F$ show that $\Gamma$ has Leptin density $1/\mathrm{covol}(\Gamma) = 1/m(F)$.
\end{proof}

The next result, whose proof uses a standard argument \cite[Lem.~2]{Sie43}, will be needed in the following section.
\begin{lemma}\label{prop:exfd}
Let $G$ be a locally compact group and let $\Gamma$ be a uniform lattice in $G$. Consider any measurable relatively compact set $U\subseteq G$ such that $\Gamma U =G$. Then $U$ contains a measurable relatively compact left-fundamental domain for $\Gamma$ in $G$.
\end{lemma}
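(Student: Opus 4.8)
The plan is to start from the already-established existence of \emph{some} measurable relatively compact left-fundamental domain $D$ for $\Gamma$ (as recalled above from \cite{fg68}) and to ``fold'' it into $U$ by a first-hit rule. Recall that being a left-fundamental domain means precisely that $D$ is a transversal for the left $\Gamma$-action, i.e.\ every orbit $\Gamma g$ meets $D$ in exactly one point. The guiding idea is that moving each representative of $D$ to an equivalent point lying in $U$ again produces a transversal, and that transversal will be the desired $F$. By contrast, naively disjointifying the cover $\{\gamma U\}_{\gamma\in\Gamma}$ and translating the resulting pieces back into $U$ does \emph{not} yield a transversal, and anchoring the construction to $D$ is exactly what avoids this subtlety.

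Concretely, I would fix an enumeration $\Gamma=\{\gamma_n:n\in\mathbb N\}$. For $g\in D$ the hypothesis $\Gamma U=G$ guarantees that the set $\{n\in\mathbb N:\gamma_n g\in U\}$ is nonempty: writing $g=\gamma u$ with $\gamma\in\Gamma$ and $u\in U$ gives $\gamma^{-1}g\in U$, and $\gamma^{-1}=\gamma_m$ for some $m$. Hence $n(g):=\min\{n:\gamma_n g\in U\}$ is well defined, and I set
\[
D_n:=\big(D\cap\gamma_n^{-1}U\big)\setminus\bigcup_{m<n}\gamma_m^{-1}U,\qquad F:=\bigcup_{n\in\mathbb N}\gamma_n D_n .
\]
By construction $\{D_n\}_n$ partitions $D$ with $D_n=\{g\in D:n(g)=n\}$, and $\gamma_n D_n\subseteq U$, so $F\subseteq U$. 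Since $U$ is relatively compact, $\overline F\subseteq\overline U$ is compact, whence $F$ is relatively compact; and $F$ is measurable because each $\gamma_m^{-1}U$ is a left translate of a measurable set, so each $D_n$ and each $\gamma_n D_n$ is measurable and $F$ is a countable union of such sets.

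It then remains to check that $F$ meets every orbit exactly once, which is the content of being a left-fundamental domain. Every $g\in D$ contributes the single point $\gamma_{n(g)}g\in F$, which lies in the orbit $\Gamma g$; and distinct points of $D$ lie in distinct orbits because $D$ is a transversal, so their contributions lie in distinct orbits. Consequently each orbit $\Gamma h$ meets $F$ in exactly the image of its unique representative in $D$. This simultaneously yields $\Gamma F=G$ (every orbit is hit, so every $h\in G$ lies in $\Gamma F$) and the disjointness condition of the definition: if $F^{-1}\gamma_1\cap F^{-1}\gamma_2\neq\varnothing$ then $\gamma_2\gamma_1^{-1}\in FF^{-1}\cap\Gamma$, which forces two points of $F$ into a single orbit, hence they are equal, whence $\gamma_1=\gamma_2$.

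The main obstacle I anticipate is conceptual rather than computational: one must resist the tempting ``disjointify the cover and translate back'' construction, which fails precisely because the choice $\gamma_1=e$ makes the first folded piece all of $U$, so nothing is trimmed. Tying the folding to a pre-existing transversal $D$ is what makes it select exactly one representative per orbit; the remaining points, namely nonemptiness of $\{n:\gamma_n g\in U\}$, measurability of translates of $U$, and relative compactness, are routine.
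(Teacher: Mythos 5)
Your proof is correct and follows essentially the same first-hit disjointification argument as the paper (which attributes it to Siegel): both anchor the construction to a pre-existing fundamental domain $D$, cut it according to which translate $\gamma^{-1}U$ first captures each point, and push the pieces into $U$. The only cosmetic differences are that you enumerate all of the (countable) group $\Gamma$ and partition $D$ before translating, whereas the paper restricts to the finite set $\{\gamma:U\cap\gamma D\neq\varnothing\}$ and disjointifies after translating by removing the orbit saturations $\Gamma F_j$.
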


\begin{proof}
Let $F$ be a measurable relatively compact left-fundamental domain for $\Gamma$ in $G$. By relative compactness of both $F$ and $U$, the set $\Gamma_U=\{\gamma\in \Gamma:U \cap \gamma F \ne\varnothing\}$ is finite, and we can thus write $\Gamma_U=\{\gamma_1,\ldots,\gamma_n\}$.
Define
 $F_k= F\cap \gamma_k^{-1} U\subseteq F$ and note $F=\bigcup_{k=1}^n F_k$ as $\Gamma U=G$.
Define $F_1'= \gamma_1 F_1$ and
$F_k'=(\gamma_kF_k)\setminus \bigcup_{j=1}^{k-1}( \Gamma F_j)\subseteq U$ for $k \geq 2$ and note that $F_U:=\dot{\bigcup}_{k=1}^n F_k'$ is a disjoint union. Furthermore, $F_U$ is a measurable relatively compact set which satisfies $F_U\subseteq U$.
We show that $F_U$ is a left-fundamental domain for $\Gamma$. Consider any $z\in G$ and choose $k\in \{1,\ldots, n\}$ such that $z\in \Gamma F_k$ but $z\notin \Gamma F_j$ for all $j<k$. Then $z\in \Gamma F_k' \subseteq \Gamma F_U$ and thus $\Gamma F_U=G$. Assume $F_U^{-1}\gamma_1^{\prime} \cap F_U^{-1}\gamma_2^{\prime} \neq \varnothing$ for some $\gamma_1^{\prime}, \gamma_2^{\prime}\in \Gamma$. Then $F_r'^{-1}\gamma_1^{\prime} \cap F_s'^{-1}\gamma_2^{\prime} \neq\varnothing$ for some $r$ and some $s$. We may assume $r\le s$ without loss of generality. By construction of $F_s'$ we then have $r=s$. Since $F_r'\subseteq \gamma_r F$, we have $F^{-1} \gamma_r^{-1}\gamma_1^{\prime}\cap F^{-1}\gamma_r^{-1}\gamma_2^{\prime}\neq\varnothing$. Since $F$ is a left-fundamental domain for $\Gamma$, this implies $\gamma^{\prime}_1=\gamma^{\prime}_2$.
\end{proof}

\section{Uniform model sets in amenable groups}

We analyse the  upper and lower Leptin densities of weak model sets in amenable groups. As an application we give a simple geometric proof of the density formula for regular model sets.

\subsection{Definition and elementary properties}

Let $(G,H,\cL)$ be a \textit{cut-and-project scheme}, i.e., both $G$ and $H$ are unimodular locally compact groups, and $\cL$ is a uniform lattice in $G\times H$, which projects injectively to $G$ and densely to $H$. Let us denote the canonical projections by $\pi^G$ and $\pi^H$.   If $W\subseteq H$ is relatively compact, then the set $\Lambda_W=\pi^G(\cL\cap(G\times W))$  is called a \textit{weak model set}. Any weak model set is left-uniformly discrete and right-uniformly discrete. A \textit{model set} is a weak model set which satisfies $\mathring{W}\ne \varnothing$. Here and in the sequel of this paper, we denote by $\mathring{S}$ the interior of a set $S$ in a topological space.
  A model set is both right-relatively dense and left-relatively dense. We call a model set \textit{regular} if $W$ is Riemann measurable, i.e., if $m_H(\partial W)=0$, a nomenclature that is nowadays common in the abelian case \cite[Def.~7.2]{BG2}. Recall that the topological boundary of set $S$ in a topological spaces is defined as $\partial S:= \overline{S} \cap \overline{S^c}$. Note that in other contexts, the definition of regular model set can be slightly different.  For instance, in \cite{BHP17,BHP17II}, regular model sets are defined for non-abelian groups and homogeneous spaces arising from them  via cut-and-project schemes with a slightly more restrictive notion of window but also for possibly non-uniform lattices.
We refer to \cite{RVM3, Moody00, BG2} for further background on model sets in mostly abelian situations.

\smallskip

As $\pi^H(\cL)$ is dense in $H$, the lattice $\cL$ admits fundamental domains in $G\times H$ that are ``closely aligned to $G$''. This is formalised in the following lemma, which combines \cite[Lem.~2.3]{HR15} and Lemma~\ref{prop:exfd}.

\begin{lemma}\label{lem:smallfd}
Let $(G, H,\mathcal L)$ be a cut-and-project scheme and consider any non-empty, open and relatively compact set $U\subseteq H$. Then there exists an open relatively compact set $V\subseteq G$  satisfying
\begin{displaymath}
\mathcal L (V\times U) =G\times H.
\end{displaymath}
Moreover $V\times U$ contains a measurable relatively compact left-fundamental domain for $\cL$. \qed
\end{lemma}

\subsection{Leptin density of weak model sets}

The above property allows to derive bounds on the strong upper and lower densities of any weak model sets in the same way as in the lattice case.

\begin{theorem}[Leptin density estimates for weak model sets]\label{prop:df1}
Let  $(G,H,\cL)$ be a cut-and-project scheme, where $G$ is amenable.
Fix a relatively compact set $W\subseteq H$. We then have
\begin{displaymath}
 \frac{m_H(\mathring{W})}{\mathrm{covol}(\mathcal{L})} \le \Lep^-_{\Lambda_{\mathring{W}}} \le
 \Lep^-_{\Lambda_{W}} \le \Lep^+_{\Lambda_{W}}\le
 \Lep^+_{\Lambda_{\overline{W}}} \le
  \frac{m_H(\overline{W})}{\mathrm{covol}(\mathcal{L})} \ .
\end{displaymath}
In particular, if $W$ is Riemann measurable, then
\begin{displaymath}
\Lep_{\Lambda_{W}}=  \frac{m_H(W)}{\mathrm{covol}(\mathcal{L})}\ .
\end{displaymath}
\end{theorem}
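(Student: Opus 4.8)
The plan is to establish the two outer inequalities $m_H(\mathring W)/\mathrm{covol}(\cL)\le \Lep^-_{\Lambda_{\mathring W}}$ and $\Lep^+_{\Lambda_{\overline W}}\le m_H(\overline W)/\mathrm{covol}(\cL)$, since the three middle inequalities are routine. The two monotonicity steps $\Lep^-_{\Lambda_{\mathring W}}\le \Lep^-_{\Lambda_W}$ and $\Lep^+_{\Lambda_W}\le\Lep^+_{\Lambda_{\overline W}}$ follow from the inclusions $\Lambda_{\mathring W}\subseteq\Lambda_W\subseteq\Lambda_{\overline W}$ and the evident monotonicity of $\Lep^\pm$ in the underlying measure, while $\Lep^-_{\Lambda_W}\le\Lep^+_{\Lambda_W}$ is exactly Lemma~\ref{lem:Smp}, where amenability of $G$ enters. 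The basic bookkeeping device is that injectivity of $\pi^G$ on $\cL$ yields, for every relatively compact $W'\subseteq H$ and every compact $A\subseteq G$, the identity $\delta_{\Lambda_{W'}}(A)=\mcard(\cL\cap(A\times W'))$. Throughout I would mimic the tiling argument of Proposition~\ref{prop:sudl}, now carried out for the lattice $\cL$ inside $G\times H$.

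For the upper bound I would fix an open relatively compact unit neighbourhood $U\subseteq H$ and invoke Lemma~\ref{lem:smallfd} to produce an open relatively compact $V\subseteq G$ together with a measurable relatively compact left-fundamental domain $F\subseteq V\times U$ for $\cL$. As in Proposition~\ref{prop:sudl}, the translates $\{F^{-1}\ell\}_{\ell\in\cL}$ are pairwise disjoint and tile $G\times H$, each of Haar measure $\mathrm{covol}(\cL)$. For $\ell=(g,h)\in\cL\cap(A\times\overline W)$ one has $F^{-1}\ell\subseteq (V^{-1}A)\times(U^{-1}\overline W)$, so disjointness gives $\delta_{\Lambda_{\overline W}}(A)\cdot\mathrm{covol}(\cL)\le m_G(V^{-1}A)\,m_H(U^{-1}\overline W)$. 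Choosing $K=\overline{V^{-1}}$ yields $m_G(KA)\ge m_G(V^{-1}A)$, whence $\sup_{A\in\cK_p}\delta_{\Lambda_{\overline W}}(A)/m_G(KA)\le m_H(U^{-1}\overline W)/\mathrm{covol}(\cL)$ and therefore $\Lep^+_{\Lambda_{\overline W}}\le m_H(U^{-1}\overline W)/\mathrm{covol}(\cL)$ by Definition~\ref{def:lep}. Finally I would let $U$ shrink: since $\overline W$ is compact, for each $\varepsilon>0$ outer regularity of $m_H$ provides an open $O\supseteq\overline W$ with $m_H(O)<m_H(\overline W)+\varepsilon$, and a unit neighbourhood $U$ with $U^{-1}\overline W\subseteq O$, giving the desired bound.

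For the lower bound I would fix a compact $W_0\subseteq\mathring W$ and choose a unit neighbourhood $U\subseteq H$ small enough that $UW_0\subseteq\mathring W$, which is possible by compactness of $W_0$ in the open set $\mathring W$; then I take $V$ and $F\subseteq V\times U$ from Lemma~\ref{lem:smallfd} as before. For compact $A$ the tiles $F^{-1}\ell$ meeting $A\times W_0$ cover $A\times W_0$, so their number is at least $m_G(A)\,m_H(W_0)/\mathrm{covol}(\cL)$; moreover each such $\ell$ lies in $F(A\times W_0)\subseteq (VA)\times(UW_0)\subseteq (KA)\times\mathring W$ with $K=\overline V$, so these $\ell$ are counted by $\delta_{\Lambda_{\mathring W}}(KA)$. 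Hence $\delta_{\Lambda_{\mathring W}}(KA)\ge m_G(A)\,m_H(W_0)/\mathrm{covol}(\cL)$ for all $A\in\cK_p$, and Definition~\ref{def:lep} gives $\Lep^-_{\Lambda_{\mathring W}}\ge m_H(W_0)/\mathrm{covol}(\cL)$. Passing to the supremum over compact $W_0\subseteq\mathring W$ and using inner regularity, $\sup_{W_0}m_H(W_0)=m_H(\mathring W)$, concludes this inequality.

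Assembling the five inequalities proves the displayed chain. For the final assertion, Riemann measurability means $m_H(\partial W)=0$, i.e.\ $m_H(\mathring W)=m_H(\overline W)=m_H(W)$, which collapses the chain to equalities and yields $\Lep_{\Lambda_W}=m_H(W)/\mathrm{covol}(\cL)$. The genuinely delicate points, as opposed to the bookkeeping, are the two regularity and compactness approximations—shrinking $U$ so that $U^{-1}\overline W$, respectively $UW_0$, is suitably controlled—and keeping track that the auxiliary set $V$, and hence the compact $K$ entering Definition~\ref{def:lep}, is allowed to depend on $U$; the supremum over $K$ in $\Lep^-$ and the infimum over $K$ in $\Lep^+$ are precisely what absorb this dependence.
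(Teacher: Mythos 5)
Your proof is correct and follows essentially the same route as the paper's: both invoke Lemma~\ref{lem:smallfd} to obtain a fundamental domain of $\cL$ contained in $V\times U$ with $U\subseteq H$ small, count lattice points in $A\times\overline{W}$ (respectively tiles covering $A\times W_0$) via the disjoint translates $\cF_U^{-1}\ell$ of that domain, and then shrink $U$ using regularity of the Haar measure on $H$, with amenability entering only through Lemma~\ref{lem:Smp} for the middle inequality. The differences are purely cosmetic --- the paper packages the tile count through the sets $\cA_U$ and $\cK_U\cA$ and works with $U^{-1}U\overline{W}$ and $U^{-1}UV$ where you work with $U^{-1}\overline{W}$ and $UW_0$ --- so there is nothing to add.
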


\begin{remark} \label{rem:densityBHP}
	The so-called density formula $L_{\Lambda_W} =  m_H(W) / \mathrm{covol}(\mathcal{L})$ for Riemann measurable windows has been proven in the literature in various contexts, compare the discussion in the introduction. In \cite[Sec.~4.5]{BHP17} the notion of covolume of a regular model set  over locally compact second countable groups is defined. We emphasize that regular model sets as of \cite{BHP17, BHP17II} are defined via cut-and-project schemes with additional assumptions on $W$, but $\mathcal{L}$ does not need to be uniform, and the construction works beyond the realm of amenable groups.
	In the language of the present paper, the covolume of a regular model set is the inverse of its Leptin density. Sticking to the framework of \cite{BHP17}, a density approximation formula for regular model sets in amenable locally compact second countable groups can be derived by combining Prop.~4.13 with the ergodic theorem in Cor.~5.4 of \cite{BHP17}.
\end{remark}

\begin{proof}
(i) Consider any compact unit neighborhood $U\subseteq H$. By Lemma~\ref{lem:smallfd}, there exists a compact set $F$ such that $\cL (F \times U)=G \times H$, and we can pick a relatively compact measurable left-fundamental domain $\cF_U \subseteq F \times U$.  
Define $\cK_U=\cF_U^{-1}\cF_U$, write $K= \overline{F^{-1}F}$ and note that $\cK_U\subseteq K\times  U^{-1}U$.
For an arbitrary compact set $\cA \subseteq G \times H$, consider
\begin{displaymath}
\cL_\cA=\{\ell\in \cL: \cA\cap\cF_U^{-1} \ell\ne\varnothing\}\ , \qquad \cA_U=\bigcup_{\ell\in \cL_\cA} \cF_U^{-1}\ell \ .
\end{displaymath}
Then $\cL_\cA$ is a finite set and $\cA_U$ is a disjoint union.
We have
\begin{displaymath}
\cA\subseteq \cA_U\subseteq \cK_U\cA\ .
\end{displaymath}
For the second implication, assume that $x\in \cA_U$. Then $x\in\cF_U^{-1}\ell$ for some $\ell\in \cL_{\cA}$ and there exists some $a\in \cA$ such that $a\in \cF_U^{-1}\ell$. Hence $x\in \cF_U^{-1}\ell\subseteq \cF_U^{-1}\cF_Ua\subseteq \cF_U^{-1}\cF_U\cA\subseteq \cK_U\cA$.

\smallskip

\noindent (ii)  We first prove the last two upper inequalities. Fix  arbitrary $\varepsilon>0$.
Choose $U$ in (i) sufficiently small such that $m_H(U^{-1}U\overline{W})\le (1+\varepsilon)\cdot m_H(\overline W)$, which is possible due to continuity of the Haar measure on $H$. For arbitrary compact $A\subseteq G$  and $\cA = A \times \overline{W}$ we estimate with equality~\eqref{eqn:FD}
\begin{displaymath}
\begin{split}
m_{G\times H}(\cK_U\cA)&\ge m_{G\times H}(\cA_U)= m_{G\times H}(\cF_U)\cdot \mcard(\cL_\cA)=m_{G\times H}(\cF_U)\cdot \sum_{\ell\in \cL_\cA} \mcard(\cL\cap \cF_U^{-1}\ell)\\
&=m_{G\times H}(\cF_U)\cdot \mcard(\cL\cap \cA_U)\ge m_{G\times H}(\cF_U)\cdot \mcard(\cL\cap \cA) \ .
\end{split}
\end{displaymath}
Combining the latter estimate with $m_{G\times H}(\cK_U\cA)\le m_G(KA)\cdot m_H(U^{-1}U\overline{W})$, we arrive at
\begin{equation}\label{eq:rhs}
\begin{split}
(1+\varepsilon)\cdot \frac{m_H(\overline{W})}{m_{G\times H}(\cF_U)} &\cdot m_G(KA) \ge\frac{m_H(U^{-1}U\overline{W})}{m_{G\times H}(\cF_U)} \cdot m_G(KA) \ge \mcard(\cL\cap \cA)\\
& \ge \mcard(\Lambda_{\overline{W}}\cap A) \ge \mcard(\Lambda_{W}\cap A)  \ .
\end{split}
\end{equation}
Now the last two upper inequalities claimed in the theorem follow by definition of the upper Leptin density.

\smallskip

\noindent (iii) To show the first two lower inequalities, fix arbitrary $\varepsilon>0$.  Take  a compact set $V\subseteq \mathring{W}$ sufficiently large such that
$m_H(V)\ge (1-\varepsilon)\cdot m_H(\mathring{W})$, which is possible by regularity from below of the Haar measure.

\noindent
Choose a compact zero neighborhood $U\subseteq H$ such that $U^{-1}UV\subseteq \mathring{W}$, compare \cite[Lemma~4.1.3]{DE}. Consider arbitrary compact $A\subseteq G$ and write $\cA=A\times V$. Define $\cF_U$ and $\cK_U$ as in step~(i) above.
Using the equality~\eqref{eqn:FD}, we compute
\begin{displaymath}
\begin{split}
m_{G\times H}(\cA)&\le m_{G\times H}(\cA_U){=} m_{G\times H}(\cF_U)\cdot \mcard(\cL_\cA)=m_{G\times H}(\cF_U)\cdot \sum_{\ell\in \cL_\cA} \mcard(\cL\cap \cF_U^{-1}\ell)\\
&=m_{G\times H}(\cF_U)\cdot \mcard(\cL\cap \cA_U)\le m_{G\times H}(\cF_U)\cdot \mcard(\cL\cap \cK_U\cA) \ .
\end{split}
\end{displaymath}
Noting that $\cK_U\cA\subseteq KA\times U^{-1} U V\subseteq KA\times \mathring{W}$, we infer
\begin{displaymath}
\mcard(\cL\cap \cK_U\cA)\le \mcard(\Lambda_{\mathring{W}}\cap KA)
\le \mcard(\Lambda_{W}\cap KA) \ .
\end{displaymath}
Note that in the first inequality above we used the injectivity of the projection $\pi^G$.
Putting everything together yields
\begin{align} \label{eqn:rhs2}
& (1-\varepsilon)\cdot  m_H(\mathring{W}) \cdot m_G(A) \leq m_H(V) \cdot m_G(A)
 \leq m_{G \times H}(\cF_U) \cdot \mcard(\Lambda_{\mathring{W}} \cap KA) \nonumber \\
 & \quad \leq m_{G \times H}(\cF_U) \cdot \mcard(\Lambda_{{W}} \cap KA).
\end{align}
Now the first two lower inequalities claimed in the theorem follow by definition of the lower Leptin density.

\smallskip

\noindent (iv) Recall that $\Lep^-_{\Lambda_{W}}\le \Lep^+_{\Lambda_{W}}$ by amenability of $G$, see  Lemma~\ref{lem:Smp}.
Now the ``In particular''-part of the theorem follows from  $m_H(\mathring W)=m_H(W)=m_H(\overline{W})$ by Riemann measurability of $W$.
\end{proof}

\noindent
The inequalities~\eqref{eq:rhs} and~\eqref{eqn:rhs2} in the preceding proof lead to the following uniform estimates.

\begin{cor}
	Let $(G,H,\mathcal{L})$ be a cut-and-project scheme as in the previous theorem. Let $(A_i)_{i\in\mathbb I}$ be a strong F{\o}lner net in $G$. Then for all $\varepsilon> 0$ there is some $i_0 \in \mathbb{I}$ such that for all $i \succ i_0$ and all $(x,h) \in G \times H$, one gets the uniform estimates
	\[
	\frac{ m_H(\mathring{W})}{\mathrm{covol}(\mathcal{L})} - \varepsilon \le \frac{\mcard(\Lambda_{Wh} \cap A_ix)}{m_G(A_i)} \leq  \frac{m_H(\overline{W})}{\mathrm{covol}(\mathcal{L})} + \varepsilon.
	\]
	In particular, if $W$ is Riemann measurable then the convergence to the limit is uniform in $(x,h)$.
\end{cor}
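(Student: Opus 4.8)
The plan is to re-examine the two chains of inequalities~\eqref{eq:rhs} and~\eqref{eqn:rhs2} established in the proof of Theorem~\ref{prop:df1} and to upgrade them to estimates that are uniform in the parameters $x\in G$ and $h\in H$. The key observation is that all auxiliary data in that proof can be fixed once and for all, independently of $(x,h)$. First I would fix $\varepsilon'>0$ (to be adjusted at the end) together with a compact symmetric unit neighborhood $B_u\subseteq G$, and then, exactly as in step~(i) of that proof, a compact unit neighborhood $U\subseteq H$, a compact $F\subseteq G$ with $\cL(F\times U)=G\times H$, a relatively compact measurable left-fundamental domain $\cF_U\subseteq F\times U$ with $m_{G\times H}(\cF_U)=\mathrm{covol}(\cL)$, and $K=\overline{F^{-1}F}$. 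Since $H$ is unimodular, right-invariance of $m_H$ gives $m_H(\overline{W}h)=m_H(\overline{W})$ and $m_H(\mathring{W}h)=m_H(\mathring{W})$, while $\overline{Wh}=\overline{W}h$ and $\mathring{Wh}=\mathring{W}h$ because right translation is a homeomorphism. Hence the smallness requirements on $U$ in steps~(ii) and~(iii)—namely $m_H(U^{-1}U\overline{W})\le(1+\varepsilon')m_H(\overline{W})$, and a compact $V\subseteq\mathring{W}$ with $m_H(V)\ge(1-\varepsilon')m_H(\mathring{W})$ and $U^{-1}UV\subseteq\mathring{W}$—persist for every translated window $Wh$ with the same $U$ and with $V$ replaced by $Vh$. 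Thus~\eqref{eq:rhs} and~\eqref{eqn:rhs2} hold for the window $Wh$ with constants independent of $h$.

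Applying~\eqref{eq:rhs} to the window $\overline{Wh}$ and the compact set $A=A_ix$, and using right-invariance of $m_G$ to write $m_G(KA_ix)=m_G(KA_i)$, I obtain
\begin{displaymath}
\frac{\mcard(\Lambda_{Wh}\cap A_ix)}{m_G(A_i)}\le(1+\varepsilon')\cdot\frac{m_H(\overline{W})}{\mathrm{covol}(\cL)}\cdot\frac{m_G(KA_i)}{m_G(A_i)}\ .
\end{displaymath}
Since $(A_i)$ is a F{\o}lner net and $e\in K$, we have $m_G(KA_i)/m_G(A_i)\to1$; choosing $i$ beyond some $i_1$ and $\varepsilon'$ small enough yields the upper bound $m_H(\overline W)/\mathrm{covol}(\cL)+\varepsilon$, uniformly in $(x,h)$.

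The lower bound is the delicate part, since~\eqref{eqn:rhs2} only controls the count over $KA_ix$, not over $A_ix$. Applying~\eqref{eqn:rhs2} to the window $Wh$ and $A=A_ix$, using $e\in K$ so that $KA_i\setminus A_i=\delta^KA_i$, and $\Lambda_{Wh}\subseteq\Lambda_{\overline{W}h}$, gives
\begin{displaymath}
(1-\varepsilon')\cdot\frac{m_H(\mathring{W})}{\mathrm{covol}(\cL)}\cdot m_G(A_i)\le\mcard(\Lambda_{Wh}\cap KA_ix)\le\mcard(\Lambda_{Wh}\cap A_ix)+\mcard(\Lambda_{\overline{W}h}\cap(\delta^KA_i)x)\ .
\end{displaymath}
The main obstacle is to show the boundary term is negligible uniformly in both $x$ and $h$. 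For this I would first note that the family $\{\Lambda_{\overline{W}h}:h\in H\}$ is \emph{uniformly} upper translation bounded: by injectivity of $\pi^G$ we have $\mcard(\Lambda_{\overline{W}h}\cap B_u^2g)=\mcard(\cL\cap(B_u^2\times\overline{W})(g,h))$, and since $B_u^2\times\overline{W}$ is compact, upper translation boundedness of the lattice measure $\delta_\cL$ (Remark~\ref{rem:utb}) yields a single constant $C_u$ with $\mcard(\Lambda_{\overline{W}h}\cap B_u^2g)\le C_u$ for all $g\in G$, $h\in H$. The estimate of Lemma~\ref{lem:assm} then runs with these uniform constants, giving
\begin{displaymath}
\sup_{x\in G,\,h\in H}\frac{\mcard(\Lambda_{\overline{W}h}\cap(\delta^KA_i)x)}{m_G(A_i)}\le\frac{C_u}{m_G(B_u)}\cdot\frac{m_G(B_u\delta^KA_i)}{m_G(A_i)}\longrightarrow0\ ,
\end{displaymath}
the convergence following from the strong F{\o}lner property (Proposition~\ref{lem:refine2}(iv)) and right-invariance of $m_G$.

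Combining the last two displays and dividing by $m_G(A_i)$, there is an index $i_2$ beyond which $\mcard(\Lambda_{Wh}\cap A_ix)/m_G(A_i)\ge m_H(\mathring{W})/\mathrm{covol}(\cL)-\varepsilon$ uniformly in $(x,h)$, once $\varepsilon'$ is small. Taking $i_0$ to be a common upper bound of $i_1$ and $i_2$ in the directed set $\mathbb I$ completes the two-sided estimate. Finally, if $W$ is Riemann measurable then $m_H(\mathring{W})=m_H(\overline{W})=m_H(W)$, so the two bounds squeeze $\mcard(\Lambda_{Wh}\cap A_ix)/m_G(A_i)$ to $m_H(W)/\mathrm{covol}(\cL)$; as $\varepsilon>0$ is arbitrary, this is exactly uniform convergence in $(x,h)$.
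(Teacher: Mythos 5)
Your proof is correct and follows essentially the same route as the paper: both arguments re-use the inequalities~\eqref{eq:rhs} and~\eqref{eqn:rhs2} with constants made uniform in $h$, obtain the upper bound from the F{\o}lner property of $(A_i)$, and obtain the lower bound by showing that the point count on $(\delta^K A_i)x$ is uniformly negligible via upper translation boundedness of $\delta_{\mathcal L}$ together with Proposition~\ref{lem:refine2}. The only cosmetic difference is that you push the uniform translation bound down to the projected measures $\Lambda_{\overline{W}h}$ on $G$ before estimating, whereas the paper applies Lemma~\ref{lem:tbb} to $\delta_{\mathcal L}$ directly on $G\times H$ and factorizes the resulting product set.
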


\begin{proof}
	Let $\varepsilon > 0$.
 It follows from the inequality~\eqref{eq:rhs} with $W$ replaced by $Wh$, from the unimodularity of the group $G$ and from the F{\o}lner property of $(A_i)_{i\in \mathbb I}$ that there must be some $i_0 \in \mathbb{I}$ such that
 \[
 \sup_{(x,h) \in G \times H} \frac{\mcard(\Lambda_{Wh} \cap A_ix)}{m_G(A_i)} \leq (1+ \varepsilon)^2 \cdot \frac{m_H(\overline{W})}{\mathrm{covol}(\mathcal{L})}
 \]
 for all $i \succ i_0$. Adjusting $\varepsilon$ gives the claimed upper bound.

\smallskip

\noindent As for the lower bound, we consider the compact set $K$ in inequality~\eqref{eqn:rhs2} in the proof of Theorem~\ref{prop:df1} and claim that for given $\varepsilon>0$ there must be some $i_1 \succ i_0$ such that
 \[
 \inf_{(x,h) \in G \times H} \frac{\mcard(\Lambda_{Wh} \cap A_i x)}{m_G(A_i)} \ge
 \inf_{(x,h) \in G \times H} \frac{\mcard(\Lambda_{Wh} \cap KA_i x)}{m_G(A_i)} -\varepsilon
 \]
 for all $i \succ i_1$. To this end, fix a finite constant $C$ and a symmetric unit neighborhood $B \subseteq G \times H$ such that $\delta_{\cL}(B^2(x,h))\le C$. Using Lemma~\ref{lem:tbb} (which is justified as $G\times H$ is unimodular) and the comparison Lemma~\ref{lem:vHsF} (which is justified as $K$ is symmetric and contains the identity), we estimate
\begin{displaymath}
\begin{split}
 \mcard (\Lambda_{Wh} \cap (A x) &\, \Delta\,  (KAx))
 =  \mcard (\Lambda_{Wh} \cap \delta^{K}(A x)) \leq  \mcard (\Lambda_{Wh} \cap \partial_{K}(Ax)) \\
&\leq \frac{C}{m_{B\times H}(B)} \cdot   m_{G \times H}(\pi^G(B) \partial_{K}(Ax) \times \pi^H(B)Wh)\\
 &=  \frac{C}{m_{G\times H}(B)} \cdot  m_G(\pi^G(B)\partial_K A)\cdot m_H(\pi^H(B)W) \ .
\end{split}
 \end{displaymath}
Note that the above estimate is uniform in $(x,h)\in G\times H$. Thus
dividing by $m_G(A)$ and using the strong F{\o}lner property of Lemma~\ref{lem:assm} yields the claim.
But inequality~\eqref{eqn:rhs2} yields
 \[
 \inf_{(x,h) \in G \times H} \frac{\mcard(\Lambda_{Wh} \cap KA_i x)}{m_G(A_i)} \geq (1- \varepsilon) \cdot \frac{m_H(\mathring{W})}{\mathrm{covol}(\mathcal{L})}
 \]
 for all $i$. Now adjusting $\varepsilon$ finishes the proof.
\end{proof}

\section{Almost periodicity of regular model sets}

\subsection{Almost periodicity for point sets and measures}

With respect to mathematical diffraction theory, almost periodicity of the underlying point set has become a central notion in recent years.  For upper translation bounded measures in $\sigma$-compact locally compact abelian groups, this is analysed in \cite{LSS20}.  In particular, pure point diffraction of a measure is shown to be equivalent to so-called mean almost periodicity of the measure \cite[Thm.~2.13]{LSS20}, see also below. The discussion is based on density notions using van Hove sequences.  Here we will work with the stronger uniform version of mean almost periodicity, thereby avoiding averaging sequences or nets.

\begin{defi}
A model set $\Lambda$ in an amenable locally compact group $G$ is called {\em right-uniformly mean almost periodic} if for every $\varepsilon>0$ there exists a both left- and right-relatively dense set $T\subseteq G$ such that $\Lep^+_{\Lambda\,\triangle\,(\Lambda t)}\le \varepsilon$ for every $t\in T$.
\end{defi}
	
\begin{remark}
Left-uniformly mean almost periodic model sets can be defined analogously.
 Recall that groups containing a model set are necessarily unimodular. Due to invariance of the Leptin densities with respect to translations, see Lemma~\ref{sec:lepinv}, the set $T$ in the above definition can be chosen to be  symmetric, i.e., $T=T^{-1}$. In this situation, $T$ is right-relatively dense if and only it if left-relatively dense.
\end{remark}
	
Note that, for $\sigma$-compact locally compact abelian groups, the latter is a weaker notion than Weyl almost periodicity \cite[Def.~4.1]{LSS20}, the latter being defined via approximation by trigonometric polynomials. Also note that Meyer's and Guih\'eneuf's definition of almost periodic pattern \cite[Def.~7]{GM14}, \cite[Def.~4]{G18} and almost periodic measure \cite[Def.~10]{GM14} coincide with Weyl almost periodicity. Note finally that the notion of Weyl almost periodicity is based on the Weyl seminorm \cite[Sec.~1.3]{LSS20}, which can be rephrased using the Leptin upper density, without resorting to a van Hove sequence or net.

\subsection{Uniform mean almost periodicity and model sets}

We analyse (right-)uniform mean almost periodicity for regular model sets.

\begin{theorem}
Any regular model set in an amenable locally compact group is right-uniformly mean almost periodic.
\end{theorem}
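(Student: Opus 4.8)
The plan is to reduce the statement to the Leptin density estimate for weak model sets, Theorem~\ref{prop:df1}, by re-expressing each right translate of $\Lambda_W$ as a weak model set and then controlling the window of the symmetric difference. Fix a regular model set $\Lambda_W$ with Riemann measurable window $W$ and let $\varepsilon>0$. The starting observation is that right translation of $\Lambda_W$ by the $G$-component of a lattice point acts on the window. Concretely, for $s=(s_G,s_H)\in\mathcal{L}$ the group structure of $\mathcal{L}$ gives $\Lambda_W s_G=\Lambda_{W s_H}$, since $(\ell_G,\ell_H)\mapsto(\ell_G s_G,\ell_H s_H)$ is a bijection of $\mathcal{L}$ carrying $G\times W$ onto $G\times W s_H$. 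Moreover, injectivity of $\pi^G$ on $\mathcal{L}$ (so that the star map $p\mapsto p^\star$ is single valued) yields $\Lambda_A\,\triangle\,\Lambda_B=\Lambda_{A\,\triangle\,B}$ for relatively compact $A,B\subseteq H$. Combining the two identities, I obtain the key formula
\[
\Lambda_W\,\triangle\,(\Lambda_W s_G)=\Lambda_{W\,\triangle\, W s_H}.
\]

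With this in hand, the candidate set of almost periods is $T:=\Lambda_K=\pi^G(\mathcal{L}\cap(G\times K))$ for a suitable compact symmetric unit neighborhood $K\subseteq H$. Since $K$ has nonempty interior, $T$ is a model set, hence both left- and right-relatively dense, exactly as the definition requires. For $t=s_G\in T$ (so that $s_H=t^\star\in K$), the upper Leptin density of $\Lambda_W\,\triangle\,(\Lambda_W t)=\Lambda_{W\,\triangle\, W s_H}$ is controlled by Theorem~\ref{prop:df1}:
\[
\Lep^+_{\Lambda_W\,\triangle\,(\Lambda_W t)}\le\frac{m_H\big(\overline{W\,\triangle\, W s_H}\big)}{\mathrm{covol}(\mathcal{L})}.
\]
Thus it suffices to choose $K$ so small that $m_H(\overline{W\,\triangle\, W s_H})\le\varepsilon\cdot\mathrm{covol}(\mathcal{L})$ simultaneously for all $s_H\in K$.

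The main work, and the step I expect to be the obstacle, is precisely this \emph{uniform} smallness of the \emph{closure} $\overline{W\,\triangle\, W s_H}$. Mere $L^1$-continuity of translation controls $m_H(W\,\triangle\, W s_H)$, but not the measure of its closure, which is what Theorem~\ref{prop:df1} demands; here Riemann measurability $m_H(\partial W)=0$ must be used. For $s_H\in K$ one checks the inclusion into the already closed set
\[
W\,\triangle\, W s_H\subseteq\big(\overline{W}\cap\overline{W^c}K\big)\cup\big(\overline{W} K\cap\overline{W^c}\big)=:E_K,
\]
obtained by noting that $x\in W\setminus W s_H$ forces $x\in\overline{W}$ and $x s_H^{-1}\in W^c$, whence $x\in\overline{W^c}K$, and symmetrically for $W s_H\setminus W$. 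As $\overline{W^c}K$ and $\overline{W}K$ are closed (a closed set times a compact set is closed), $E_K$ is closed, so $\overline{W\,\triangle\, W s_H}\subseteq E_K$ and the resulting bound $m_H(\overline{W\,\triangle\, W s_H})\le m_H(E_K)$ is uniform in $s_H\in K$.

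Finally I would let $K$ shrink to $\{e_H\}$ along compact symmetric neighborhoods ordered by reverse inclusion. Using $\bigcap_K\overline{W^c}K=\overline{W^c}$ and $\bigcap_K\overline{W}K=\overline{W}$, both families $\overline{W}\cap\overline{W^c}K$ and $\overline{W}K\cap\overline{W^c}$ decrease to $\overline{W}\cap\overline{W^c}=\partial W$. Since $W$ is relatively compact, these are decreasing nets of compact sets intersecting in the null set $\partial W$, so by compactness together with outer regularity of $m_H$ some member has measure below any prescribed threshold; hence $m_H(E_K)\to 0$. Choosing $K$ with $m_H(E_K)\le\varepsilon\cdot\mathrm{covol}(\mathcal{L})$ then yields $\Lep^+_{\Lambda_W\,\triangle\,(\Lambda_W t)}\le\varepsilon$ for every $t\in T$, which is the assertion.
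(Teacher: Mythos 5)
Your proposal is correct and follows essentially the same route as the paper: the almost periods are taken from $\Lambda_U$ for a small compact symmetric unit neighborhood $U\subseteq H$, the translate is rewritten via the window ($\Lambda_W t=\Lambda_{W\pi^H(\ell)}$), the symmetric difference is captured by a weak model set whose window closure shrinks to $\partial W$, and Theorem~\ref{prop:df1} together with $m_H(\partial W)=0$ finishes the argument. The only cosmetic difference is that you compute the window of the symmetric difference exactly as $W\,\triangle\,Ws_H$ and enclose its closure in $E_K$, whereas the paper bounds it directly by $(WU)\cap(W^cU)$; both closures decrease to $\partial W$.
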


This is an immediate consequence of the following result, which may be seen as weak variant of Lemma~3 from \cite{GM14}, however without resorting to an averaging net.

\begin{lemma}
Let $\Lambda_W$ be a regular model set. Then for every $\varepsilon>0$  there exists a compact symmetric unit neighborhood $U\subseteq H$ such that $\Lep^+_{\Lambda_{(WU) \cap (W^cU)}}\le \varepsilon$, and $t\in \Lambda_U$ implies $\Lambda_W\,\triangle\, (\Lambda_W t)\subseteq \Lambda_{(WU) \cap (W^c U)}$.
\end{lemma}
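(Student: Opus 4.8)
The plan is to separate the two assertions: the set inclusion is purely algebraic and holds for \emph{every} compact symmetric unit neighborhood $U\subseteq H$, while the density bound $\Lep^+_{\Lambda_{(WU)\cap(W^cU)}}\le\varepsilon$ requires choosing $U$ sufficiently small. Since the inclusion is universal, I would simply pick $U$ small enough for the density estimate and observe that the inclusion comes for free. The essential tool is the $\star$-map: since $\pi^G$ is injective on $\cL$, the group $\pi^G(\cL)\subseteq G$ carries a well-defined homomorphism $\gamma\mapsto\gamma^\star$, where $(\gamma,\gamma^\star)$ is the unique lift of $\gamma$ in $\cL$, and for relatively compact $V\subseteq H$ one has $\gamma\in\Lambda_V$ iff $\gamma\in\pi^G(\cL)$ and $\gamma^\star\in V$.

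For the inclusion, fix $t\in\Lambda_U$ with lift $(t,t^\star)$, $t^\star\in U$, and split the symmetric difference. If $x\in\Lambda_W\setminus(\Lambda_W t)$, then $x^\star\in W\subseteq WU$ (using $e\in U$), while $xt^{-1}\in\pi^G(\cL)$ with $(xt^{-1})^\star=x^\star(t^\star)^{-1}$, and $xt^{-1}\notin\Lambda_W$ forces $x^\star(t^\star)^{-1}\in W^c$, whence $x^\star=\big(x^\star(t^\star)^{-1}\big)t^\star\in W^cU$. If $x=\gamma t\in(\Lambda_W t)\setminus\Lambda_W$, then $x^\star=\gamma^\star t^\star\in WU$, while $x\notin\Lambda_W$ gives $x^\star\in W^c\subseteq W^cU$. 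In either case $x\in\pi^G(\cL)$ and $x^\star\in(WU)\cap(W^cU)$, so $x\in\Lambda_{(WU)\cap(W^cU)}$. This is routine once the $\star$-map machinery is in place.

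For the density bound I would invoke the upper estimate $\Lep^+_{\Lambda_{W'}}\le m_H(\overline{W'})/\mathrm{covol}(\cL)$ from Theorem~\ref{prop:df1} applied to the relatively compact window $W'=(WU)\cap(W^cU)$; it then suffices to make $m_H\big(\overline{(WU)\cap(W^cU)}\big)$ smaller than $\varepsilon\cdot\mathrm{covol}(\cL)$. As $U$ is a compact neighborhood, $\overline{W}U$ and $\overline{W^c}U$ are closed, so $\overline{(WU)\cap(W^cU)}\subseteq(\overline{W}U)\cap(\overline{W^c}U)$. Riemann measurability gives that $\partial W=\overline{W}\cap\overline{W^c}$ is compact with $m_H(\partial W)=0$, so by outer regularity I can fix an open set $O\supseteq\partial W$ with $m_H(O)<\varepsilon\cdot\mathrm{covol}(\cL)$.

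The main obstacle is showing that the ``thickened boundary'' $(\overline{W}U)\cap(\overline{W^c}U)$ is contained in $O$ once $U$ is small enough; this is the only genuinely topological point, and because $H$ need not be metrizable it must be handled with nets. I would argue by contradiction along the directed family of compact symmetric unit neighborhoods ordered by reverse inclusion: were no such $U$ to exist, pick $h_U\in\big((\overline{W}U)\cap(\overline{W^c}U)\big)\setminus O$; below a fixed $U_0$ these lie in the compact set $\overline{W}U_0$, so a subnet converges to some $h^\star$. Writing $h_U=w_Uu_U=v_Uu_U'$ with $w_U\in\overline{W}$, $v_U\in\overline{W^c}$ and $u_U,u_U'\in U\to e$, one gets $w_U\to h^\star$ and $v_U\to h^\star$, hence $h^\star\in\overline{W}\cap\overline{W^c}=\partial W\subseteq O$; since $O$ is open this contradicts $h_U\notin O$ eventually. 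Thus for small $U$ we obtain $m_H\big(\overline{(WU)\cap(W^cU)}\big)\le m_H(O)<\varepsilon\cdot\mathrm{covol}(\cL)$, and Theorem~\ref{prop:df1} yields $\Lep^+_{\Lambda_{(WU)\cap(W^cU)}}\le\varepsilon$, completing the proof.
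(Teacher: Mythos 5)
Your proposal is correct and follows essentially the same route as the paper: the inclusion is verified for arbitrary compact symmetric unit neighborhoods $U$ by window arithmetic (the paper phrases it as $(\Lambda_W t)\,\triangle\,\Lambda_W=\Lambda_{W\pi^H(\ell)}\,\triangle\,\Lambda_W\subseteq\Lambda_{(WU)\cap(W^cU)}$, which is your $\star$-map computation), and the density bound comes from the upper estimate of Theorem~\ref{prop:df1} together with $m_H(\overline{WU\cap W^cU})\to m_H(\partial W)=0$ as $U\to\{e\}$. Your compactness/net argument for the latter convergence is a careful justification of what the paper dismisses as ``continuity of $m_H$'', so if anything you have supplied more detail than the original.
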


\begin{proof}
Take any compact symmetric unit neighborhood $U\subseteq H$. Consider $t\in \Lambda_U$, write $t=\pi^G(\ell)$ for $\ell\in \cL\cap (G\times U)$  and note
\begin{displaymath}
(\Lambda_W t) \, \triangle \, \Lambda_W = \Lambda_{(W \pi^H(\ell))} \, \triangle \, \Lambda_W \subseteq \Lambda_{(WU) \cap (W^cU)} \ .
\end{displaymath}
Now note that $m_H(\overline{WU \cap W^cU})\to m_H(\partial W)=0$ as $U\to \{e\}$ due to continuity of $m_H$. We thus have by Theorem~\ref{prop:df1} that $\Lep^+_{\Lambda_{WU \cap W^cU}}\le \mathrm{covol}(\mathcal{L})^{-1} m(\overline{WU \cap W^cU})\to 0$ as $U\to \{e\}$.
\end{proof}

To put this result into perspective, take any strong F{\o}lner net $(A_i)_{i\in\mathbb I}$ in $G$. We then have the estimate
\begin{displaymath}
d(t):=\limsup_{i\in \mathbb I} \frac{\text{card}((\Lambda_W t)\, \triangle \, \Lambda_W\cap A_i)}{m(A_i)} \leq  \Lep^+_{\Lambda_{(WU) \cap (W^cU)}} \ ,
\end{displaymath}
see Proposition~\ref{def:bdensities}. In particular, the set $P_\varepsilon(\Lambda_W)=\{ t\in G : d(t) \le \varepsilon \}$ is both right- and left-relatively dense for all $\varepsilon>0$. 

\medskip

In the context of $\sigma$-compact locally compact abelian groups, the property of $P_{\varepsilon}(\Lambda_W)$ being relatively dense for all $\varepsilon > 0$ is called mean almost periodicity of $\Lambda_W$, see \cite[Thm.~2.18]{LSS20}. Note that for $\Lambda$ being an arbitrary Meyer set, mean almost periodicity of $\Lambda$ is equivalent to  pure point diffraction of $\Lambda$. Moreover, any regular model set $\Lambda_W$ is a Meyer set and thus, $\Lambda_W$ has pure point diffraction.  Whereas this has been known since \cite[Thm.~5]{BM} and \cite[Thm.~1.1]{G05}, it has recently been put into broader perspective in \cite[Thms.~2.18, 2.13]{LSS20}.
We also point out that uniform mean almost periodicity of $\Lambda$ is a stronger notion than mean almost periodicity, that typically results in continuous dynamical eigenfunctions, compare \cite[Thm.~5]{Len}. In light of these results, it might also be interesting to study variants and aspects of uniformly mean almost periodic point sets beyond the abelian situation.

\appendix

\section{Nets and convergence}\label{sec:nets}

We briefly explain our definition of net and collect some basic properties. We adopt the setting of Moore-Smith convergence as described in \cite[Ch.~2]{K55}.

\subsection{Directed sets}

Let  $\mathbb I$ be a set, and let $\prec$ be a binary relation on $\mathbb I$. Recall the following properties that $(\mathbb I, \prec)$ might have.
\begin{itemize}
\item  reflexiveness: For all $i\in\mathbb I$ we have $i \prec i$.
\item anti-symmetry:  For all $i,j\in \mathbb I$, we have that $i\prec j$ and $j\prec i$ implies $i=j$.
\item  transitivity: For all $i,j,k\in \mathbb I$ we have that $i\prec j$ and $j\prec k$ implies $i\prec k$.
\item directedness: For all $i,j\in \mathbb I$ there exists $k\in \mathbb I$ such that $i\prec k$ and $j\prec k$.
\end{itemize}
We say that $(\mathbb I, \prec)$ is a \textit{partial order} if $\prec$ is reflexive, transitive and antisymmetric. In that case we call $\mathbb I$ a partially ordered set or poset. Examples of directed partial orders are $(\mathbb N, \le)$, $(\mathbb R, \le)$, $(\mathbb R, \ge)$, $(\mathcal P(X), \subseteq)$ and $(\mathcal P(X), \supseteq)$, where $\mathcal P(X)$ is the collection of subsets of the set $X$.

\subsection{Nets, cluster points and limits}

Let $(\mathbb I, \prec)$ a directed poset and let $X$ be a topological space. A \textit{net} in $X$ is a map $\mathbb I\to X$. We denote nets by $(x_i)_{i\in \mathbb I}$. The \textit{range of a net} is the set $\{x_i: i \in\mathbb I\}\subseteq X$. We call $x\in X$ a \textit{cluster point} of a net $(x_i)_{i\in \mathbb I}$ if for every neighborhood $U$ of $x$ and for every $i_0\in \mathbb I$ there exists $i\succ i_0$ such that $x_i\in U$.
A net $(x_i)_{i\in\mathbb I}$ \textit{converges} to $x\in X$ if for every neighborhood $U$ of $x$ there exists $i_0\in \mathbb I$ such that $x_i\in U$ for all $i\succ i_0$. In this case we say that $x$ is a \textit{limit point} of $
(x_i)_{i\in \mathbb I}$. Any limit point is a cluster point. Limit points are unique if and only if $X$ is a Hausdorff space \cite[Ch.~2, Th,~3]{K55}.

\subsection{Subnets and cluster points}

Let $X$ be a topological space and let $(\mathbb I, \prec)$ and $(\mathbb J, \prec)$ be directed preorders. Then a net $(y_j)_{j\in \mathbb J}$ in $X$ is a \textit{subnet} of $(x_i)_{i\in \mathbb I}$ in $X$ if $y_j=x_{\phi(j)}$ for some function $\phi: \mathbb J\to\mathbb I$ which is \textit{strictly cofinal}, i.e., for every $i_0\in\mathbb I$ there exists $j_0\in \mathbb J$ such that $j\succ j_0$ implies $\phi(j)\succ i_0$. Note that if $(x_i)_{i\in \mathbb I}$ converges to $x$, then every subnet $(y_j)_{j\in \mathbb J}$ converges to $x$. This is a direct consequence of the subnet definition. Also note that the composition of two cofinal maps is a cofinal map. Hence a subnet of a subnet is a subnet of the original net. The following characterisation of a cluster point is standard \cite[Ch.~2, Thm.~6]{K55}.

\begin{prop}\label{prop:charcp}
A point $x\in X$ is a cluster point of a net $(x_i)_{i\in \mathbb I}$ if and only if there exists a subnet $(x_{\phi(j)})_{j\in \mathbb J}$ that converges to $x$. \qed
\end{prop}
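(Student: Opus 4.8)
The plan is to establish the two implications separately; the reverse direction is routine, whereas the forward direction hinges on constructing a suitable directed index set for the subnet.

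Suppose first that a subnet $(x_{\phi(j)})_{j\in\mathbb J}$ converges to $x$, and let me verify that $x$ is a cluster point of $(x_i)_{i\in\mathbb I}$. Given a neighborhood $U$ of $x$ and an index $i_0\in\mathbb I$, convergence supplies some $j_1\in\mathbb J$ with $x_{\phi(j)}\in U$ for all $j\succ j_1$, and strict cofinality of $\phi$ supplies some $j_0\in\mathbb J$ with $\phi(j)\succ i_0$ for all $j\succ j_0$. Using directedness of $\mathbb J$ to pick $j$ above both $j_0$ and $j_1$, the index $i:=\phi(j)$ then satisfies $i\succ i_0$ and $x_i\in U$, which is exactly the cluster point condition.

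For the converse, assume $x$ is a cluster point and write $\mathcal N$ for the collection of neighborhoods of $x$. I would form the index set
\[
\mathbb J=\{(i,U)\in\mathbb I\times\mathcal N : x_i\in U\},
\]
ordered by $(i,U)\prec(i',U')$ iff $i\prec i'$ and $U'\subseteq U$, and set $\phi(i,U)=i$. Reflexivity and transitivity of this order are immediate, so $\mathbb J$ is a preorder; the crucial point is that it is \emph{directed}, and this is precisely where the cluster point hypothesis enters. Given $(i_1,U_1),(i_2,U_2)\in\mathbb J$, put $U_3:=U_1\cap U_2\in\mathcal N$, choose $i_*\in\mathbb I$ above $i_1$ and $i_2$ by directedness of $\mathbb I$, and then invoke the cluster point property to obtain $i_3\succ i_*$ with $x_{i_3}\in U_3$; the pair $(i_3,U_3)$ lies in $\mathbb J$ and dominates both given pairs.

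It then remains to check that $\phi$ is strictly cofinal and that $(x_{\phi(j)})_{j\in\mathbb J}$ converges to $x$. For cofinality, given $i_0\in\mathbb I$ the pair $(i_0,X)$ belongs to $\mathbb J$, and any $(i,U)\succ(i_0,X)$ has $\phi(i,U)=i\succ i_0$. For convergence, fix $U_0\in\mathcal N$; the cluster point property yields some $i_0$ with $x_{i_0}\in U_0$, so $(i_0,U_0)\in\mathbb J$, and every $(i,U)\succ(i_0,U_0)$ satisfies $x_{\phi(i,U)}=x_i\in U\subseteq U_0$. I expect the main obstacle to be purely conceptual, namely guessing the right index set: one must couple the original indices with the neighborhoods of $x$ so that the first coordinate realizes ``arbitrarily late'' (yielding cofinality) while the shrinking second coordinate forces the subnet into every neighborhood (yielding convergence). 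No genuine computation is involved.
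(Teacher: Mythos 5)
Your proof is correct. The paper itself gives no argument here, deferring to the cited reference (Kelley, Ch.~2, Thm.~6), and your construction --- indexing the subnet by pairs $(i,U)$ with $x_i\in U$, ordered coordinatewise with reverse inclusion on neighborhoods, and using the cluster point property exactly once to establish directedness --- is precisely the standard argument found there, with the routine verifications of strict cofinality and convergence carried out correctly.
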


\subsection{Nets in $\overline{\mathbb R}$}

Let us  consider nets in the affinely extended real numbers $\overline{\mathbb R}=\mathbb R \cup \{\infty, - \infty\}$.
A net $(x_i)_{i\in \mathbb I}$ in $\overline{\mathbb R}$ is \textit{increasing} if $x_i \le  x_j$ for all $i\prec j$. It is  \textit{decreasing} if $x_i \ge  x_j$ for all $i\prec j$. As in the sequence case, we have the following result.

\begin{lemma}
Let $(x_i)_{i\in\mathbb I}$ be an increasing net in $\overline{\mathbb R}$. Then $(x_i)_{i\in \mathbb I}$ converges to its supremum $x=\sup\{x_i: i \in \mathbb I\}\le\infty$.  \qed
\end{lemma}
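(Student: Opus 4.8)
The plan is to unwind the definition of convergence of a net to the value $x=\sup\{x_i : i\in\mathbb I\}$ directly from the defining property of the supremum, and to use the increasing hypothesis to propagate a single ``good'' index to all later ones. I would split into cases according to whether $x$ is finite or $x=\infty$; the degenerate case $x=-\infty$ forces every $x_i=-\infty$, so the net is constantly equal to $-\infty$ and hence trivially eventually inside any neighborhood of $-\infty$.

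First I would treat the case $x\in\mathbb R$. Fix a basic neighborhood $(x-\varepsilon,x+\varepsilon)$ of $x$ with $\varepsilon>0$. Since $x$ is an upper bound for the range, $x_i\le x<x+\varepsilon$ for every $i\in\mathbb I$. Since $x$ is the \emph{least} upper bound, the value $x-\varepsilon$ fails to be an upper bound, so there exists an index $i_0$ with $x_{i_0}>x-\varepsilon$. The increasing hypothesis now does the essential work: for every $i\succ i_0$ we have $x_i\ge x_{i_0}>x-\varepsilon$, and combined with the upper bound this yields $x_i\in(x-\varepsilon,x+\varepsilon)$. Thus the net is eventually inside the chosen neighborhood, which is precisely convergence to $x$.

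Next I would treat the case $x=\infty$. A basic neighborhood of $\infty$ in $\overline{\mathbb R}$ has the form $(M,\infty]$ for some $M\in\mathbb R$. As the supremum is $\infty$, the number $M$ is not an upper bound, so there is $i_0$ with $x_{i_0}>M$; monotonicity again gives $x_i\ge x_{i_0}>M$ for all $i\succ i_0$, so the net is eventually in $(M,\infty]$ and hence converges to $\infty$.

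I do not expect any genuine obstacle here, as the argument is elementary. The only points requiring a modicum of care are to phrase the reasoning in terms of the neighborhood basis of the order topology on $\overline{\mathbb R}$ (so that the two cases are handled uniformly), and to note that it is the \emph{increasing} property, rather than directedness of $\mathbb I$, that converts a single witnessing index $i_0$ into an eventual statement. Uniqueness of the limit need not be addressed, since $\overline{\mathbb R}$ is Hausdorff and the value $\sup\{x_i : i\in\mathbb I\}$ has been exhibited directly as a limit point.
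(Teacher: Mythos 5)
Your proof is correct and complete; the paper states this lemma without proof (marking it as routine), and your argument — locating a single index $i_0$ via the least-upper-bound property and using monotonicity to propagate it to all $i\succ i_0$, with the three cases $x\in\mathbb R$, $x=\infty$, $x=-\infty$ handled via the neighborhood basis of the order topology — is exactly the standard argument the authors leave to the reader. No gaps.
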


Let $(x_i)_{i\in\mathbb I}$ be a net in $\overline{\mathbb R}$. Then the net $(s_i)_{i\in\mathbb I}$ where $s_i=\inf\{x_j: j\succ i\}$ is increasing, and the net
 $(t_i)_{i\in\mathbb I}$ where $t_i=\sup\{x_j: j\succ i\}$ is decreasing. Note $s_i\le x_i\le t_i$ for all $i\in \mathbb I$ as $\prec$ is reflexive. As the nets $(s_i)_{i\in \mathbb I}$ and  $(t_i)_{i\in \mathbb I}$ both have a unique limit point, we can define
\begin{equation}\label{eq:limsup}
\begin{split}
\liminf_{i\in \mathbb I} x_i &= \lim_{i\in \mathbb I} \inf_{ j\succ i} x_j=\sup_{i\in \mathbb I}  \inf_{ j\succ i} x_j \ge -\infty \ ,
\qquad
\limsup_{i\in \mathbb I} x_i = \lim_{i\in \mathbb I} \sup_{ j\succ i}  x_j  = \inf_{i\in \mathbb I}  \sup_{ j\succ i}  x_j \le \infty \ .
\end{split}
\end{equation}
As in the sequence case it is seen that $\liminf_{i\in \mathbb I}x_i$ is the smallest cluster point of $(x_i)_{i\in \mathbb I}$. Moreover one may easily construct a subnet $(x_{\phi(n)})_{n\in\mathbb N}$ which is a sequence and converges to $\liminf_{i\in \mathbb I}x_i$. Analogous results hold for $\limsup_{i\in \mathbb I}x_i$.

\section{Box decompositions in $\sigma$-compact groups}

Some $\sigma$-compact locally compact groups admit  F{\o}lner sequences of hierarchically nested monotiles. Such sequences can be used for box decomposition arguments. We indicate in this section how to prove equality of uniform Leptin and Beurling densities via this method, as was done in \cite[Sec.~7]{GKS08} for $G= \mathbb{R}^d$ and $\nu = \delta_{\Lambda}$ being a Dirac comb over a uniformly discrete set~$\Lambda$.

\medskip

In a $\sigma$-compact group we call a compact set $A$ a {\em right monotile for $G$} if there is a countable set $T$ such that $A\cdot T = G$ and for each $s,t \in T$ with $s \neq t$, we have $\mathring{A}s \cap \mathring{A}t = \varnothing$, where as above, $\mathring{A}$ denotes the interior of the set $A$. Analogously, one  defines left monotiles for $G$. In the following we will exclusively deal with right monotiles and refer to those as {\em monotiles for $G$}.

\begin{defi} \label{defi:monotiles}
	We call a F{\o}lner sequence $(A_n)_{n\in \mathbb N}$ a {\em  monotile F{\o}lner sequence} if for each $n \in \N$, the set  $A_n$ a is a monotile for $G$. We say that a monotile F{\o}lner sequence is {\em topologically nested} if for all $n \in \N$ there is some $s_n \in G$ such that $A_n \subseteq \mathring{A}_{n+1} s_n$.
\end{defi}

Monotile F{\o}lner sequences can be found in many amenable groups. In \cite{Weiss01}, Weiss has shown that all countable linear amenable groups and all  residually finite amenable groups admit such sequences. In the abelian situation, Emerson used in \cite[Thm.~5]{Eme68} the structure theorem for  compactly generated LCA groups for finding monotile F{\o}lner sequences. By passing to a subsequence if necessary one can make sure that these sequences are topologically nested.
The construction of Emerson has been used in the context of mathematical quasicrystals, see for example the proof of Lem.~1.1 in \cite{Martin2}. 
Moreover, topologically nested monotile F{\o}lner sequences can be constructed explicitly for many homogeneous Lie groups such as the Heisenberg group.

\begin{example}
Let $G = H_3(\mathbb{R})$ be the $3$-dimensional Heisenberg group with its group multiplication given by
\[
(a,b,c) \cdot (x,y,z) = (a+x, b + y, c + z + ay).
\]
For each $n \in \mathbb{N}$, the set $F_n := [0, n)^2 \times [0, n^2)$ is a fundamental domain for the (uniform) lattice
\[
\Gamma_n := \{ (k,l,m): k,l \in n\Z, \, m \in n^2\Z \big\}.
\]
Now it is easy to see that by setting $A_n := \overline{F}_n$ for $n \in \N$, one obtains a monotile strong F{\o}lner sequence for $G$. Moreover, we have $A_n \cdot t_n \subseteq \mathring{A}_{n+1}$ when setting
$t_n = (\varepsilon, \varepsilon, \varepsilon)$, with $0 < \varepsilon < 1$.
Therefore, $(A_n)_{n\in \mathbb N}$ is also topologically nested.
\end{example}

For illustration, let us give a box decomposition proof of  $B^-_\cA\le \Lep^-$ when $\cA = (A_n)_{n\in \mathbb N}$ is a topologically nested monotile strong F{\o}lner sequence in a $\sigma$-compact amenable unimodular group.

\begin{proof}[Proof of $B^-_\cA\le \Lep^-$]
Let $\cA=(A_n)_{n\in\mathbb N}$ be a F{\o}lner sequence of monotiles as of Definition~\ref{defi:monotiles} and suppose that $\cA$ is also a strong F{\o}lner sequence.  We show
\begin{displaymath}
\Lep^-=\sup_{K\in \cK} \inf_{A\in \cK_p} \frac{\nu(KA)}{m(A)}
\ge B_\cA^-=\lim_{n\to\infty} \inf_{s\in G} \frac{\nu(A_ns)}{m(A_n)}\ .
\end{displaymath}
Setting $B_n := \mathring{A}_n =  A_n \setminus \partial A_n$ we note that we have
\begin{displaymath}
B_\cA^-=\lim_{n\to\infty} \inf_{s\in G} \frac{\nu(B_ns)}{m(A_n)}\ .
\end{displaymath}
Indeed, this follows from standard estimates, using $A_ns\setminus B_ns = \partial A_ns\subseteq \partial_U A_ns$ for any relatively compact unit neighborhood $U$, and using the F{\o}lner property together with
\begin{displaymath}
\lim_{n\to\infty} \sup_{s\in G} \frac{\nu(\partial_U A_ns)}{m(A_n)}=0 \ ,
\end{displaymath}
cf.~Lemma~\ref{lem:assm}. Fix arbitrary $\varepsilon>0$. Take $n\in\mathbb N$ large enough such that for all $s\in G$ we have
\begin{displaymath}
\frac{\nu(B_ns)}{m(A_n)} \ge B_\cA^--\varepsilon \ .
\end{displaymath}
Consider an arbitrary compact set $A\in\cK_p$. Since  $B_n t_n = \mathring{A}_n t_n \supseteq A_{n-1}$ for some $t_n \in G$, by assumption on a topologically nested monotile F{\o}lner sequence, we find a finite set $I$ and $s_i \in G$ for $i \in I$ such that
\begin{displaymath}
A\subseteq \dot{\bigcup_{i\in I}} B_ns_i \subseteq \bigcup_{i\in I} A_ns_i \subseteq KA,
\end{displaymath}
where $K={A_nA_n^{-1}}$, compare the argument for the inclusions~\eqref{eq:afd} in the proof of Proposition~\ref{prop:sudl}. Now we can estimate for $n$ large enough
\begin{displaymath}
\begin{split}
\nu(KA) &\ge \nu\left(\bigcup_{i\in I} B_ns_i\right)=\sum_{i\in I} \nu(B_ns_i)
\ge |I|\cdot m(A_n)\cdot(B_\cA^--\varepsilon) \\
&\ge m\left(\bigcup_{i\in I} A_ns_i\right)\cdot(B_\cA^--\varepsilon) \ge m(A) \cdot(B_\cA^--\varepsilon) \ .
\end{split}
\end{displaymath}
As $A$ was arbitrary, we infer
\begin{displaymath}
\sup_{K\in\cK} \inf_{A\in \cK_p} \frac{\nu(KA)}{m(A)} \ge B_\cA^--\varepsilon \ .
\end{displaymath}
As $\varepsilon>0$ was arbitrary, we get $\Lep^-\ge B_\cA^-$.
\end{proof}

\section{The densities of Gr\"ochenig, Kutyniok, Seip}

We review the density notion from \cite{GKS08}, which has inspired our definition of Leptin density.  Consider any locally finite measure $\nu$ on $G$.
For example, $\nu$ might be the counting measure on some right uniformly discrete point set in $G$.
Assume that $G$ admits a uniform lattice. Then densities of $\nu$ are defined via point counting with respect to $\Gamma$.

\begin{defi}\label{defGKS}
Let $G$ be a unimodular locally compact group. Assume that $\Gamma$ is a uniform lattice in $G$. For locally finite measures $\mu, \nu$ on $G$ we write $\mu\le \nu$ if for every $\varepsilon>0$ there exists $K\in \cK$ such that for all $A\in \cK$ we have $(1-\varepsilon) \cdot \mu(A)\le \nu(KA)$. We define
\begin{displaymath}
\cd_\nu^- = \sup\{ \alpha\in [0,\infty): \alpha \delta_\Gamma\leq  \nu\}\ , \qquad
\cd_\nu^+ = \inf\{ \alpha\in [0,\infty): \nu\leq  \alpha \delta_\Gamma\} \ .
\end{displaymath}
Here $\delta_\Gamma$ is the point counting measure associated to $\Gamma$ via $\delta_\Gamma(A)=\mcard(A\cap \Gamma)$.
\end{defi}

\begin{prop}
Let $G$ be a unimodular locally compact group which admits a uniform lattice $\Gamma$. Let $\nu$ be any locally finite measure on $G$.  We then have
\begin{displaymath}
\Lep_\Gamma\cdot \cd_\nu^- =\Lep^-_\nu \ , \qquad
\Lep_\Gamma\cdot \cd_\nu^+= \Lep^+_\nu \ ,
\end{displaymath}
where $\Lep_\Gamma$ denotes the Leptin density of the lattice $\Gamma$.
\end{prop}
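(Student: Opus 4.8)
The plan is to read both Gröchenig--Kutyniok--Seip densities as \emph{relative Leptin densities} of $\nu$ with respect to the lattice comb $\delta_\Gamma$, and then to exploit that $\delta_\Gamma$ itself carries an honest two-sided Leptin density $\Lep_\Gamma$ (Proposition~\ref{prop:sudl}) to convert comparison with $\delta_\Gamma$ into comparison with $m$. First I would unwind Definition~\ref{defGKS}: since the inequality $(1-\varepsilon)\alpha\,\delta_\Gamma(A)\le\nu(KA)$ is vacuous whenever $\delta_\Gamma(A)=0$, the relation $\alpha\delta_\Gamma\le\nu$ is equivalent to the existence, for each $\varepsilon>0$, of a compact $K$ with $\inf\{\nu(KA)/\delta_\Gamma(A): A\in\cK,\ \delta_\Gamma(A)>0\}\ge(1-\varepsilon)\alpha$. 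Taking the supremum over $\alpha$ this yields
\[
\cd_\nu^-=\sup_{K\in\cK}\ \inf_{A\in\cK,\ \delta_\Gamma(A)>0}\frac{\nu(KA)}{\delta_\Gamma(A)}\ ,
\]
and, by the dual manipulation, $\cd_\nu^+=\inf_{K}\sup_{A}\nu(A)/\delta_\Gamma(KA)$. In this language the proposition becomes the assertion that relative densities are multiplicative along the passage from $\nu$ to $\delta_\Gamma$ to $m$, the middle factor being $\Lep_\Gamma=\Lep^\pm_{\delta_\Gamma}$.

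The engine of the proof is the two-sided sandwich of $\delta_\Gamma$ by Haar measure furnished by Proposition~\ref{prop:sudl}. Because $\Lep^-_{\delta_\Gamma}=\Lep_\Gamma$, for every $\varepsilon>0$ there is a compact $K$ with $\delta_\Gamma(KA)\ge(\Lep_\Gamma-\varepsilon)\,m(A)$ for all $A\in\cK_p$; because $\Lep^+_{\delta_\Gamma}=\Lep_\Gamma$, there is a compact $K$ with $\delta_\Gamma(A)\le(\Lep_\Gamma+\varepsilon)\,m(KA)$ for all $A\in\cK_p$. Here amenability of $G$ is used implicitly, since it is exactly what makes $\Lep_\Gamma$ exist; these are the only facts about $\Gamma$ that enter.

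Then I would establish the four inequalities by the familiar absorb-a-compact-into-the-averaging-set argument. For $\Lep^-_\nu\ge\Lep_\Gamma\cdot\cd_\nu^-$: given $\alpha$ with $\alpha\delta_\Gamma\le\nu$ and a witnessing $K_0$, apply $\nu(K_0KA)\ge(1-\varepsilon)\alpha\,\delta_\Gamma(KA)$ and then $\sup_K\inf_A\delta_\Gamma(KA)/m(A)=\Lep_\Gamma$, absorbing $K_0K$ into a single compact, to get $\Lep^-_\nu\ge(1-\varepsilon)\alpha\Lep_\Gamma$; then let $\varepsilon\to0$ and take the supremum over $\alpha$. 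For the reverse inequality I would fix $\alpha$ with $\alpha\Lep_\Gamma<\Lep^-_\nu$, pick $K_1$ with $\nu(K_1B)\ge c\,m(B)$ for some $\alpha\Lep_\Gamma<c<\Lep^-_\nu$, and combine this with the upper sandwich $\delta_\Gamma(A)\le(\Lep_\Gamma+\varepsilon')m(K_2A)$ applied to $B=K_2A$, producing a single compact certifying $\alpha\delta_\Gamma\le\nu$ and hence $\alpha\le\cd_\nu^-$. The two inequalities for $\Lep^+_\nu$ and $\cd_\nu^+$ are verbatim duals, interchanging the roles of the lower and upper sandwiches for $\delta_\Gamma$.

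The routine but genuinely necessary technical point---where care is required---is the mismatch between positivity of $\delta_\Gamma$-measure and of $m$-measure: a compact set may carry lattice points yet have zero Haar measure, so the relative-density extrema run over a different family of test sets than the ordinary Leptin densities. I would absorb this, as in the proof of Lemma~\ref{lem:tbD}, by the standard thickening $A\mapsto B_0A$ with $B_0$ a compact symmetric unit neighborhood containing $e$, which enlarges a null set to one of positive measure without decreasing $\delta_\Gamma$, the extra factor $B_0$ being harmlessly absorbed into the averaging compact. Dually, for the upper density one must ensure the denominators $\delta_\Gamma(KA)$ are positive; this is automatic once $K$ is large enough, by relative denseness (cocompactness) of $\Gamma$, and costs nothing since $\cd_\nu^+$ is an infimum over $K$. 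The only remaining thing to watch is the bookkeeping of the constants $(1\pm\varepsilon),(1\pm\varepsilon')$ so that they collapse to $1$ in the limit; since $\Lep_\Gamma=1/\mathrm{covol}(\Gamma)$ is positive and finite, all divisions by $\Lep_\Gamma$ are legitimate and the boundary cases $\cd_\nu^\pm\in\{0,\infty\}$ (e.g.\ $\nu$ not upper translation bounded) are covered by the same inequalities read with the usual conventions in $[0,\infty]$.
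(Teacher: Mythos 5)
Your argument is correct and follows essentially the same route as the paper: both proofs convert the comparison of $\nu$ with $\delta_\Gamma$ into a comparison with Haar measure by sandwiching $\delta_\Gamma$ between multiples of $m$ and absorbing the resulting compact sets into the averaging compact. The only real difference is that the paper quotes the exact tiling inequalities $m(F)\,\delta_\Gamma(A)\le m(KA)$ and $m(A)\le m(F)\,\delta_\Gamma(KA)$ (with $K=\overline{F^{-1}F}$, valid for \emph{every} compact $A$) from the proof of Proposition~\ref{prop:sudl}, which eliminates both the $\varepsilon$-bookkeeping and the $\mathcal K$-versus-$\mathcal K_p$ thickening step that you handle by hand.
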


\begin{proof}
We argue $\Lep_\Gamma\cdot \cd^- =\Lep^-$, the proof for the upper density is analogous.

\noindent (i) We collect some prerequisites from the proof of Proposition~\ref{prop:sudl}. Let $F$ be a measurable relatively compact left-fundamental domain for $\Gamma$ and note $\Lep_\Gamma\cdot m(F)=1$. Defining  $K=\overline{F^{-1} F}$, we  have for any $A\in \cK$ the estimates
\begin{displaymath}
m(F)\cdot\delta_\Gamma(A)\le m(KA) \ , \qquad m(F)\cdot\delta_\Gamma(KA)\ge m(A)\ .
\end{displaymath}
\noindent (ii) To show $\cd^- \le m(F) \cdot \Lep^-$, we assume without loss of generality $\cd^- >0$.
Consider any $0<d< \cd^- $. Then by Definition~\ref{defGKS} there exists $K'\in\cK$ such that for all $A\in\cK$ we have $d\,\delta_\Gamma(A)\le \nu(K'A)$. This implies
$\nu(K'KA) \ge d\,\delta_\Gamma(KA) \ge d \cdot m(A)/m(F)$. We thus have $d/m(F) \le \Lep^-$. As $0<d< \cd^- $ was arbitrary, we conclude $\cd^- \le m(F) \cdot \Lep^-$.

\noindent (iii) To show $m(F)\cdot \Lep^-\le \cd^- $, we assume without loss of generality $\Lep^-> 0$.
Take any $0<d<\Lep^-$. Then by definition of the lower Leptin density, cf.\@ Definition~\ref{def:lep}, there exists $K'\in\cK$ such that for all $A\in\cK$ we have $d\cdot m(KA)\le \nu(K'KA)$.
This implies $m(F)\cdot d \, \delta_\Gamma(A)\le d \cdot m(KA)\le \nu(K'KA)$. We thus obtain $m(F)\cdot d < \cd^- $. As $0<d<\Lep^-$ was arbitrary, we conclude $m(F)\cdot \Lep^-\le \cd^- $.
\end{proof}

\end{document}